\newcommand{\cdummy}{\cdot}
\newcommand{\infixand}{\text{ and }}
\newcommand{\nobracket}{}
\newcommand{\tmtextbf}[1]{\text{{\bfseries{#1}}}}
\newcommand{\nosymbol}{}
\newcommand{\tmop}[1]{\ensuremath{\operatorname{#1}}}
\newcommand{\assign}{:=}
\newcommand{\tmem}[1]{{\em #1\/}}
\newcommand{\infixor}{\text{ or }}
\newcommand{\nin}{\not\in}
\newtheorem{question}{Assumption}}
\newtheorem{remark}{Remark}}
\newcommand{\mathi}{\mathrm{i}}
\newtheorem{case}{Case}
\newcommand{\bigverbar}{\bigg|}
\numberwithin{equation}{section}
\title{Asymptotic analysis for Bloch electrons with Weyl nodes\thanks{Zhennan Zhou is supported by the National Key R\&D Program of China, Project Number 2021YFA1001200, and the NSFC, grant Number 12031013, 12171013. Changhe Yang is supported by the NSF Grant DMS-2205590. }}
\author{Jianfeng Lu\thanks{Departments of Mathematics, Physics, and Chemistry, Duke University, Durham NC 27708, USA (\email{jianfeng@math.duke.edu})} 
\and Changhe Yang\thanks{Computing and Mathematical Sciences (CMS) Department, California Institute of Technology, Pasadena CA 91125, USA} 
\and 
Zhennan Zhou\thanks{Beijing International Center for Mathematical Research, Peking University, Beijing, 100871, China (\email{zhennan@bicmr.pku.edu.cn})}}
\begin{document}
\maketitle

\begin{abstract}
In this paper, we study the semiclassical behavior of Bloch electrons in the presence of Weyl nodes, which are singular points in the band structure of certain materials. We carry out asymptotic analysis
and present a rigorous derivation of the semiclassical asymptotic expansion of the current of Bloch electrons with the presence of Weyl nodes. The analysis shows that the current contains two parts, one independent of the Weyl nodes and the other a contribution from the singular points. This work provides a theoretical foundation towards a rigorous justification of recent scientific discoveries in Weyl semimetals. The main innovation of this paper is a new strategy to deal with the singular points with quantitative estimates, which may have broader applications in multiscale models with singularities.
\end{abstract}

\begin{keywords}
 Semiclassical Analysis, Bloch Electrons, Weyl Nodes.
\end{keywords}

\begin{AMS} 81Q20; 35B40.
\end{AMS}

\section{Introduction}

In this paper, we consider the semiclassical asymptotic behavior of Bloch
electrons in an external electric field as a perturbation. The motion of Bloch
electrons is one of the fundamental problems in solid-state physics, which is
concerned with the quantum dynamics of electrons in a periodic crystal
lattice, and is found to have connections with many intriguing physical phenomena, such as the Hall effect
{\cite{hoddeson_development_1987,bloch_uber_1929,peierls_zur_1929,reed_analysis_1978}}.
It is worth emphasizing that the Berry phase, first introduced by
{\cite{berry_quantal_1984}}, plays an important role in Bloch dynamics. (see
e.g.
{\cite{bloch_ultracold_2005,culcer_transport_2020,gao_field_2014,xiao_berry_2010}}).

There are two basic scenarios in the Bloch theory: the adiabatic and the
non-adiabatic regimes. In the adiabatic regime, the energy bands are isolated
from one another and the electrons effectively remain in the same energy bands
as they move through the crystal. 
It has been thoroughly investigated in the mathematical
literature (see e.g.
{\cite{teufel_adiabatic_2003,spohn_adiabatic_2001,teufel_semiclassical_2002,hovermann_semiclassical_2001}}).
In contrast, in the non-adiabatic regime, the crossings between energy bands
occur and transitions between different energy levels become significant. In
spite of many scientific applications due to band crossing phenomenon, the
mathematical understanding of such models are comparatively less complete.

Weyl equation was first proposed to describe massless fermions with a definite chirality \cite{weyl1929gravitation}. When degeneracies occur in electronic band structures, the dispersion in the vicinity of these 3D band touching points  is
generically linear and resembles the Weyl equation \cite{herring1937accidental}, and thus these touching points are referred as \emph{Weyl nodes}. Weyl nodes lead to exotic surface states in the form of Fermi arcs, which manifests the topological aspects of Weyl fermions \cite{wan2011topological,armitage2018weyl}.  Weyl nodes can occur in certain types of materials, such as topological
insulators and semimetals {\cite{wan_topological_2011,armitage_weyl_2018}},
where the band structure exhibits a non-trivial topology. 

The Weyl nodes have been studied in the mathematical literature from the aspect of spectrum theory. For example, in
{\cite{fefferman_bifurcations_2016,fefferman_honeycomb_2012,fefferman_edge_2016}},
the authors illustrated the properties of Dirac points, as the 2D analogy of
Weyl nodes, in honeycomb structures and the existence of topologically
protected edge states. For Weyl nodes in 3D, the mathematical study is still sparse, while there are also some recent progress. For example, in \cite{giuliani2021anomaly}, the authors proved the universality of the quadratic response of the quasi-particle flow in some 3D lattice models for Weyl semimetals.

In this paper, we take the usual ``independent electron'' assumption and
neglect the interaction between electrons. Then the dynamic of Bloch electrons
is described by the following semiclassical Schr\"{o}dinger equation  with a
periodic lattice potential and a slow-varying scalar potential,
\begin{equation}
  \label{Schrodinger equation 0} \mathi \varepsilon \frac{\partial}{\partial
  t} \psi^{\varepsilon} = (- \frac{\varepsilon^2}{2} \Delta + V (\frac{x}{\varepsilon}) -
  \varepsilon E (t) \cdummy x) \psi^{\varepsilon},\quad  x \in \mathbb{R}^3,\quad  t > 0.
\end{equation}
Here, $\psi^{\varepsilon}$ is the wave function and $V$ is the lattice potential, which
will be formally defined in Section \ref{1}, and may result in Weyl nodes in
the band structure. $\varepsilon \ll 1$ is referred to as the semiclassical parameter, which indicates the typical oscillation scale of the wave function as well as the size of the periodic lattice. When $\varepsilon \to 0$, the semiclassical analysis produces an asymptotic solution that captures the classical behavior of Bloch electrons. Many works are
devoted to this direction of research based on
the WKB ansatz {\cite{carles_semiclassical_2004,lu_bloch_2022}} or the Wigner transform
{\cite{e_asymptotic_2013,markowich_wignerfunction_1994,gerard_homogenization_1997,sparber_wigner_2003}}, which are tailored to treat the periodic lattice potential.

The optical properties of a material can be described by its response to
external electromagnetic fields. In this work, we assume that the Bloch dynamics
are subject to a small external electric field $\varepsilon E (t)$, which
results in an $O (\varepsilon)$ perturbation, $\varepsilon E (t) \cdummy x$,
to the potential field. Here, we suppose that the electric field is
homogenuous in space and periodic in time. Such electronic systems, subjected to periodic driving forces, are
also known as Floquet-driven models. Floquet-driven models have been
used to study a wide range of phenomena in physics and chemistry, like the
laser systems {\cite{drese_floquet_1999}}, the photoelectric
effect {\cite{zhang_anomalous_2014}} and the Stark
effect {\cite{sie_valley-selective_2018}}.

We aim to carry out asymptotic analysis for this model due to two
aspects of motivations. Mathematically, there are a lot of works on Bloch
electrons in spatially homogenuous electric field, such as
{\cite{wannier_wave_1960,krieger_time_1986,krieger_quantum_1987}}. However, to
the best of our knowledge, none of the existing of works is able to explicitly
quantify the effect of the Weyl nodes in Bloch dynamics. Besides, in
semiclassical regime, the small parameter $\varepsilon$ makes the wave
function highly oscillatory in time and space. Therefore, 
rigorous asymptotic analysis for such a multiscale model with
singularities, i.e. the Weyl nodes, is even more challenging. 

This work is also partially inspired by some recent works in the physics literature {\cite{de_juan_quantized_2017}}, which shows that in Weyl semimetals,
the injection contribution to the CPGE (circular photogalvanic effect) trace is
effectively quantized in terms of the fundamental constants with no
material-dependent parameters. However, the mathematical explanation of this
phenomenon is still absent. Our analysis of the semiclassical Bloch model with
Weyl nodes is developed to serve as a theoretical foundation towards a rigorous
justification of the results.

The presence of Weyl nodes and the semiclassical regime endow the model
\eqref{Schrodinger equation 0} with a multi-scale nature, which we will
elaborate later in Section \ref{3}. It calls for novel expansion treatments
and convergence estimates in developing a complete asymptotic analysis result.
The main innovation of this paper is that we develop a new strategy to deal
with the the singular points with quantitative estimates. More precisely, we
first use the Bloch transform to convert the semiclassical Schr\"{o}dinger
equation into an equivalent dynamical system for the density of the Bloch
functions \eqref{c equation}, which agrees with the result in the physics
literature {\cite{sipe_second-order_2000}}. Then we divide the Brillouin zone
into three parts: the regular part, the singular part and the intermediate part. We deal
with each part with different strategies and finally combine them together to
derive an asymptotic expansion with error estimates..

The main result of this paper is to give a derivation of the semiclassical
asymptotic behavior of the current of Bloch electrons with the presence of  Weyl nodes. In fact, we can also apply the asymptotic theory to a large family
of physical observables.  Our final result can be briefly summarized as follows. In the presence of the $O (\varepsilon)$ external electric field, the current $J(t)$ has the following asymptotic expansion,
\begin{equation}
    J(t) = J_1(t) + J_2(t) + o(\varepsilon^3).
\end{equation}
It contains two
parts $J_1$ and $J_2$. $J_1$ depends on the whole Brillouin zone, which is
independent of the presence of Weyl nodes. When Weyl nodes are present, the third order term of $J_1$ becomes singular. As a result, $J_2$, the contribution from Weyl nodes, appears. Because of the singular nature of Weyl nodes, the leading
order contribution of $J_2$ is $O
(\varepsilon^3 \log \varepsilon)$ instead of $O (\varepsilon^3)$. In some special cases, the zeroth to second order terms of $J_1$ may vanish, and then $J_2$ becomes the leading order term. The precise statement of the results is presented in Section \ref{3}.

In {\cite{de_juan_quantized_2017}}, the
authors focused on the second part and derived a quantized value of the system.
Therefore, the final results justified the linear approximation used in
{\cite{de_juan_quantized_2017}} from a mathematical aspect. We also note that the influence of Weyl nodes appears only in high order terms. There are several works on the high order expansions of semiclassical Schrodinger equation with periodic potentials, such as \cite{lu_bloch_2022,gaim2021higher}. Our result is a generalization of these works in the case when Weyl nodes are present.

This paper is organized as following: In Section \ref{1}, we transform the
original equation into the density of Bloch functions representation \eqref{c
equation}, which facilitates further analysis. In Section \ref{2}, we derive
some results about Weyl nodes with a similar argument to
{\cite{fefferman_honeycomb_2012}}. In Section \ref{3}, we present our main
result and lay out the proof strategy. In Section \ref{4}, we finish the proof
of the main result.

\section{Preliminaries}\label{1}

Recall the Schr{\"o}dinger equation with a
periodic lattice potential and a spatial uniform electric field 
\begin{equation}
  \label{Schrodinger equation} \mathi \varepsilon \frac{\partial}{\partial t}
  \psi = \bigl(- \frac{\varepsilon^2}{2} \Delta + V (\frac{x}{\varepsilon}) -
  \varepsilon E (t) \cdummy x \bigr) \psi,\quad x \in \mathbb{R}^3, \quad t > 0,
\end{equation}
with initial data given by
\begin{equation}
  \psi (t = 0, x) = \psi^{i n} (x) . \label{initial data}
\end{equation}
For simplicity, we suppress all dependence on $\varepsilon$ in the notation. In this paper, we only study
initial data with some special structures, which we will explain in more details in
Section \ref{1.2} (see \eqref{id ansatz}). $V (z)$ is the periodic lattice potential, and $- \varepsilon E (t) \cdummy x$ is the external electric potential.

\subsection{Bloch decomposition}

As we mentioned before, $\varepsilon$ indicates the typical size of the lattice. It is natural to use a change of variable $z = x / \varepsilon$. Then the Hamiltonian operator becomes
\begin{equation}
  - \frac{\varepsilon^2}{2} \Delta_x + V (\frac{x}{\varepsilon}) - \varepsilon
  x \cdot E (t) = - \frac{1}{2} \Delta_z + V (z) - \varepsilon^2 z \cdot E (t)
\end{equation}
In the absence of the external potential, the Hamiltonian is given by
\begin{equation}
  H_{\tmop{per}} = - \frac{1}{2} \Delta_z + V (z) .
\end{equation}
It is translational invariant with respect to the lattice $\mathbb{L}$. As a
result, the spectrum of the Hamiltonian can be understood by the Bloch-Floquet
theory. In particular, we have the periodic Bloch wave functions $\Psi^0_n (z,
p)$, given as the eigenvectors of
\begin{equation}
  \label{eigen01} H^0 (p) \Psi_n^0 (z, p) \assign \left( \frac{1}{2}  (-
  \mathi \nabla_z + p)^2 + V (z) \right) \Psi_n^0 (z, p) = \lambda_n (p)
  \Psi^0_n (z, p)
\end{equation}
on $\Gamma$ with periodic boundary conditions. Here $\Gamma$ is the
{\tmem{unit cell}} of lattice $\mathbb{L}$ and $p \in \Gamma^{\ast}$ is the
crystal momentum, where $\Gamma^{\ast}$ denotes the {\tmem{first Brillouin
zone}} (unit cell of the reciprocal lattice). For each fixed $p \in
\Gamma^{\ast}$, the Bloch Hamiltonian $H^0 (p)$ is a self-adjoint operator
with compact resolvent, the spectrum of which is given by
\[ \sigma (H^0 (p)) = \{ \lambda_n (p) \mid n \in \mathbb{Z}_+ \} \subset
   \mathbb{R}, \]
where the eigenvalues $\lambda_n (p)$ (counting multiplicity) are increasingly
ordered $\lambda_1 (p) \le \cdots \le \lambda_n (p) \le \lambda_{n + 1} (p)
\le \cdots$. The spectrum of $H^0$ is exactly the same as that of $H_{per}$. It is shown by Nenciu {\cite{nenciu_dynamics_1991}} that for any
$n \in \mathbb{Z}_+$ there exists a closed set $A_n \subset \Gamma^{\ast}$ of
measure zero such that
\begin{equation}
  \lambda_{n - 1} (p) < \lambda_n (p) < \lambda_{n + 1} (p), \quad p \in
  \Gamma^{\ast} \setminus A_n,
\end{equation}
and moreover $\lambda_n (p)$ and $\Psi^0_n (\cdummy, p)$ are analytic on $p
\in \Gamma^{\ast} \setminus A_n$.

Given $f \in L^2 (\mathbb{R}^d)$, we recall the {\tmem{Bloch transform}},
which is an isometry from $L^2 (\mathbb{R}^d)$ to $L^2  (\Gamma \times
\Gamma^{\ast})$
\begin{equation}
  \label{psiphi} \mathfrak{B}f (z, p) = \frac{1}{\lvert \Gamma^{\ast}
  \rvert^{1 / 2}}  \sum_{X \in \mathbb{L}} f (z + X) e^{- \mathi p \cdot (z +
  X)},
\end{equation}
where $\lvert \Gamma \rvert$ denotes the volume of the unit cell of the
lattice $\mathbb{L}$. The inverse transform is given by
\begin{equation}
  \label{phipsi} f (z) = \frac{1}{\lvert \Gamma^{\ast} \rvert^{1 / 2}} 
  \int_{\Gamma^{\ast}} e^{\mathi p \cdot z} \mathfrak{B}f (z, p)
  \hspace{0.17em} \mathrm{d} p.
\end{equation}
With the external potential $\varepsilon E(t)\cdot x$, our analysis needs perturbation of the Bloch
waves. For this, let us recall the perturbation theory in the context of Bloch
wave functions.  Let a family of Hamiltonian $H^\varepsilon$  denote the Bloch transform of the original Hamiltonian. We consider the asymptotic expansion for $H^{\varepsilon}(t)$ as
\begin{equation}
  H^{\varepsilon}  (t) = H^0(t) + \varepsilon H^1  (t) +
  \varepsilon^2 H^2  (t) +\mathcal{O} (\varepsilon^3) .
\end{equation}
where the zeroth order term is the Bloch Hamiltonian $H^0 = H^0 (p) = \tfrac{1}{2} (- \mathi \nabla_z
+ p)^2 + V (z)$, which is time-independent. The slow-varying potential $\varepsilon E(t)\cdot x = \varepsilon^2 E(t)\cdot z$ corresponds a second order
correction to the unperturbed Hamiltonian.
More specifically, 
we note that using Bloch transformation, we have
\[ zf (z) = \int_{\Gamma^{\ast}} ze^{\mathi p \cdot z} \mathfrak{B}f (z, p)
   \hspace{0.17em} \mathrm{d} p = \int_{\Gamma^{\ast}} - \mathi \nabla_p
   e^{\mathi p \cdot z} \mathfrak{B}f (z, p) \hspace{0.17em} \mathrm{d} p =
   \int_{\Gamma^{\ast}} \mathi e^{\mathi p \cdot z} \nabla_p \mathfrak{B}f (z,
   p) \hspace{0.17em} \mathrm{d} p. \]
Hence, we arrived at the second order correction to the Hamiltonian, which is independent of $x$
\begin{equation}
  \label{correction H1} H^2  (t) = - \mathi E (t) \cdot \nabla_p .
\end{equation}

\subsection{Berry connection}\label{1.2}

Let us now apply Bloch transform to \eqref{Schrodinger equation}. 
For a given wave function $\psi (t, x) \in L^3 (\mathbb{R}^3)$, we use an $L^2$-invariant rescaling,
\begin{equation}
  \label{rescaling} \psi_r (t, z) = \varepsilon^{3 / 2} \psi (t, \varepsilon
  z),
\end{equation}
and
\[ \psi^{i n}_r (t, z) = \varepsilon^{3 / 2} \psi^{i n} (t, \varepsilon z). \]
It is easy to see that $\| \psi_r (t, z) \|_{L^2
(\mathbb{R}^3)} = \| \psi (t, x) \|_{L^2 (\mathbb{R}^3)}$. Recall that the Bloch transform defined by \eqref{psiphi} is
\begin{equation}
  \mathfrak{B} \psi_r (t, z, p) = \frac{1}{\lvert \Gamma^{\ast} \rvert^{1 /
  2}}  \sum_{X \in \mathbb{L}} \psi_r (t, z + X) e^{- \mathi p \cdot (z + X)}
  .
\end{equation}
Apply the rescaling \eqref{rescaling} and the Bloch transform to \eqref{Schrodinger equation} and get
  \begin{align}
    \mathi \varepsilon \frac{\partial}{\partial t} \mathfrak{B} \psi_r (t, z,
    p) = \label{bloch equation} H^{\varepsilon} \mathfrak{B} \psi_r (t, z, p)
    .
  \end{align}
With the basis $\{ \Psi^0_n \}_{n = 0}^{+ \infty}$, We expand $\mathfrak{B} \psi_r$ and $\mathfrak{B} \psi_r^{i n}$  and obtain
\begin{eqnarray}
  \mathfrak{B} \psi_r (t, z, p) & = & \sum_m \hat{\psi}_m^{\varepsilon} (t, p)
  \Psi^0_m (z, p),  \label{psi expansion}\\
  \mathfrak{B} \psi_r^{i n} (z, p) & = & \sum_m \hat{\psi}_m^{in} (p) \Psi^0_m
  (z, p).  \label{psi initial expansion}
\end{eqnarray}
Here, $\hat{\psi}_m^{\varepsilon}$ and $\hat{\psi}_m^{in} (p)$ are defined by
\begin{eqnarray*}
  \hat{\psi}_m^{\varepsilon} (t, p) & = & \langle \mathfrak{B} \psi_r (t, z,
  p), \Psi^0_m (z, p) \rangle,\\
  \hat{\psi}_m^{in} (p) & = & \langle \mathfrak{B} \psi_r^{i n} (z, p),
  \Psi^0_m (z, p) \rangle .
\end{eqnarray*}
and the inner product $\langle f, g \rangle$ is given by 
\[ \langle f, g \rangle = \int_{\Gamma  ^{}}  \overline{f} (z) g (z) d z. \]
We label the variables of integral as the subscript of the right bracket and the region
of integral is the domain of the function.

Since $\mathfrak{B}$ is an isometry, thanks to Parseval's Theorem, it follows that
\[ \sum_{i = 0}^{+ \infty} \| \hat{\psi}_m^{in} (p) \|_{L^2 (\Gamma^{\ast})} =
   1. \]
For any initial data $\psi^{i n}$, we can regard the wave function as a
superposition of its components on different energy bands. Since
\eqref{Schrodinger equation} is linear, we only need to study each component.
Therefore, without loss of generality, we can assume that
\begin{equation}
    \forall m \geqslant 1, \hat{\psi}_m^{in} (p) = 0, 
\end{equation}
and
\begin{equation}
    \| \hat{\psi}_0^{in} (p) \|_{L^2 (\Gamma^{\ast})} = 1.
\end{equation}
This means we focus on the component of the 0-th energy band.

Therefore, we will assume the initial condition $\psi^{i n}$ in \eqref{initial data} is given by
  \begin{equation}
    \psi^{i n} (x) = \varepsilon^{- 3 / 2} \mathfrak{B}^{- 1}
    (\hat{\psi}_0^{in} (p) \Psi^0_0 (x / \varepsilon, p)) . \label{id ansatz}
  \end{equation}
  After $\hat{\psi}_0^{in}$ is chosen, $\psi^{i n}$ is determined. Here, we
  assign $\hat{\psi}_0^{in}$ as a smooth periodic function on $\Gamma^{\ast}$.
  This is our ansatz for the initial data $\psi^{i n}$.

Plugging the expansion \eqref{psi expansion} into \eqref{bloch equation} yields
\begin{equation}
  \sum_m \mathi \varepsilon \frac{\partial}{\partial t} 
  \hat{\psi}_m^{\varepsilon} (t, p) \Psi^0_m (z, p) = \sum_m H^{\varepsilon} 
  (\hat{\psi}_m^{\varepsilon} (t, p) \Psi^0_m (z, p)) .
\end{equation}
Recall the expansion of $H^\varepsilon$ given by
\begin{equation}
  H^{\varepsilon}  (t) = H^0 (p) + \varepsilon^2 H^2  (t), \quad H^0 (p) = \tfrac{1}{2} (- \mathi \nabla_z + p)^2 + V (z), \quad H^2  (t) = - \mathi E (t) \cdot \nabla_p,
\end{equation}
and that $\Psi^0_m (t, z, p)$ is an eigenvector of $H^0 (p)$. Using the expansion of $H^\varepsilon$ leads to
\begin{align*}
   & \mathi \varepsilon \sum_m \frac{\partial}{\partial t} 
  \hat{\psi}_m^{\varepsilon} (t, p) \Psi^0_m (z, p)
  =  \sum_m \lambda_m (p)  \hat{\psi}_m^{\varepsilon} (t, p) \Psi^0_m (z,
  p) \\ &- \mathi \varepsilon^2 E (t) \cdot \sum_m (\nabla_p 
  \hat{\psi}_m^{\varepsilon} (t, p) \Psi^0_m (z, p) +
  \hat{\psi}_m^{\varepsilon} (t, p) \nabla_p \Psi^0_m (z, p)) .
\end{align*}
Take the inner product with $\Psi^{\varepsilon}_n$ and obtain
\begin{equation}
  \label{psi equation} \frac{\partial}{\partial t}  \hat{\psi}_n^{\varepsilon}
  (t, p) + \frac{\mathi}{\varepsilon} \lambda_n (p) 
  \hat{\psi}_n^{\varepsilon} (t, p) = - \varepsilon E (t) \cdot \nabla_p 
  \hat{\psi}_n^{\varepsilon} (t, p) - \mathi \varepsilon E (t) \cdot \sum_m
  A_{nm}  \hat{\psi}_m^{\varepsilon} (t, p) .
\end{equation}
Here, the vector $A_{nm}$, usually referred to as Berry connection, is defined
by
\begin{equation}
  \label{A def} A_{nm} = \langle \Psi^0_n, \mathi \nabla_p \Psi^0_m \rangle
  .
\end{equation}
Since
\begin{equation*}
    \mathi \nabla_p \langle \Psi^0_n,  \Psi^0_m \rangle =  \langle \Psi^0_n, \mathi \nabla_p \Psi^0_m \rangle -  \langle \mathi \nabla_p \Psi^0_n, \Psi^0_m \rangle = 0,
\end{equation*}
we know that
\[ A_{mn} = \bar{A}_{nm} . \]

\subsection{Density Matrix of Bloch Functions}
We define the density matrix of Bloch functions as 
\begin{equation}
  c_{mn} = \hat{\psi}_n^{\ast}  \hat{\psi}_m . \label{c def}
\end{equation}
Here, $\hat{\psi}^{\ast}$ means the conjugate of $\hat{\psi}$. Then we have
\begin{equation}
  \label{c equation} \frac{\partial}{\partial t} c_{mn} +
  \frac{\mathi}{\varepsilon} \lambda_{mn} (p) c_{mn} = - \varepsilon E (t)
  \cdot \nabla_p c_{mn} - \mathi \varepsilon E (t) \cdot \sum_p (A_{mp} c_{pn}
  - c_{mp} A_{pn}) .
\end{equation}
Here, $\lambda_{mn}$ is given by
\begin{equation}
  \lambda_{mn} = \lambda_m - \lambda_n . \label{lambda mn def}
\end{equation}
The initial data is given by 
\begin{equation}
  c_{mn}^{in} (p) = (\hat{\psi}^{in}_n (p) )^{\ast} \hat{\psi}_m^{in} (p) =
  \left\{\begin{array}{ll}
    | \hat{\psi}^{in}_0 (p) |^2,  & m = n = 0,\\
    0, & \tmop{otherwise} .
  \end{array}\right. \label{c id}
\end{equation}
When $m = n$, the second term of the right side of \eqref{c equation} contains
two terms $A_{nn} c_{nn}$ and $- c_{nn} A_{nn}$, which cancel with each other. Hereafter, we will concentrate on $c_{mn}$
instead of $\hat{\psi}_n$.

From \eqref{psi equation}, we can define the following characteristic line,
\begin{equation}
  \left\{\begin{array}{l}
    \dot{p} (t) = \varepsilon E (t),\\
    p (0) = p_0,
  \end{array}\right. \label{x_orbit}
\end{equation}
and let $P_t (p_0)$ denote its solution. It is obvious that
\begin{equation}
  P_t (p_0) = p_0 + \varepsilon \int_0^t E (s) d s. \label{P def}
\end{equation}
For a function $f$, let $\breve{f}$ denote
\begin{equation}
  \breve{f} (t, p) = f (t, P_t (p)), \label{alt u}
\end{equation}
Applying the method of characteristic line,
$\breve{c}_{m n}$ satisfies the following equation
\begin{equation}
  \frac{\partial}{\partial t}  \breve{c}_{m n} + \frac{\mathi}{\varepsilon} 
  \breve{\lambda}_{mn} (p)  \breve{c}_{m n} = - \mathi \varepsilon E (t)
  \cdummy \sum_p (\breve{A}_{mp}  \breve{c}_{p n} - \breve{c}_{mp}
  \breve{A}_{pn}) . \label{new c eq}
\end{equation}
with initial data still given by
\begin{equation}
  \breve{c}_{mn}^{in} (p) = \left\{\begin{array}{ll}
    | \hat{\psi}^{in}_0 (p) |^2,  & m = n = 0,\\
    0, & \tmop{otherwise} .
  \end{array}\right. \label{cu id}
\end{equation}

\subsection{Observables}

Our goal is to find an asymptotic expansion of electric current $J$ when $\varepsilon
\rightarrow 0$. In this subsection, we start with deriving an expression of $J$ with $c_{m n}$.

Here, we focus on some observables given by
\[ O (t) = \int_{R^d} \psi_r (t, z)^{\ast} \mathcal{O} \psi_r (t, z) dz, \]
with $\mathcal{O}$ denoting some self-adjoint operators. Here, recall that
$\psi_r$ is the rescaled wave function defined by \eqref{rescaling}. We will express $O$ using $c_{m n}$
defined by \eqref{c def}.

Since the Bloch transformation is an isometry, we have
\begin{align*}
  O (t) = \langle \psi_r , \mathcal{O} \psi_r  \rangle = \langle \mathfrak{B} \psi_r , \mathfrak{B} (\mathcal{O} \psi_r) 
  \rangle_{z, p} .
\end{align*}
Let
\[ \hat{\mathcal{O}} =\mathfrak{B} \circ \mathcal{O} \circ \mathfrak{B}^{- 1}
   . \]
Then it follows that
\begin{eqnarray*}
  O (t) & = & \langle \mathfrak{B} \psi_r , \hat{\mathcal{O}} \mathfrak{B}
  \psi_r  \rangle_{z, p}\\
  & = & \sum_{m, n} \langle \hat{\psi}^{\varepsilon}_m \Psi^0_m,
  \hat{\mathcal{O}}  (\hat{\psi}^{\varepsilon}_n \Psi^0_n)  \rangle_{z, p} .
\end{eqnarray*}
If we further assume that
\begin{equation}
  \hat{\mathcal{O}}  (\hat{\psi}^{\varepsilon}_n \Psi^0_n) =
  \hat{\psi}^{\varepsilon}_n  \hat{\mathcal{O}} \Psi^0_n, \label{O commutable}
\end{equation}
then we get
\[ O (t) = \int_{\Gamma^{\ast}} \sum_{m, n} c_{nm} O_{mn} dp, \]
with
\[ O_{mn} (p) = \langle \Psi^0_m, \hat{\mathcal{O}} \Psi^0_n \rangle (p) .
\]
\begin{remark}
  \label{rmk1}Two common examples are $\mathcal{O}f (z) = z f (z)$ and
  $\mathcal{O}f (z) = - \mathi \nabla f (z)$. The corresponding
  $\hat{\mathcal{O}}$ are given by $\hat{\mathcal{O}} f (z, p) = z f (z, p)$
  and $\hat{\mathcal{O}} f (z, p) = (- \mathi \nabla_z + p) f (z, p)$. (In the
  first case, $\hat{\mathcal{O}}$ is easy to know and we will deduce the
  expression of $\hat{\mathcal{O}}$ for the second case later). They both
  satisfy \eqref{O commutable}.
\end{remark}

Hereafter, we will concentrate on the case where $O = - \mathi \nabla_z$. In
this case, $O (t)$ is just the current $J$ defined by
\begin{equation}
  J (t) = - \mathi \varepsilon \int_{R^d} \psi (t, x)^{\ast} \nabla_x \psi (t,
  x) dx. \label{J def}
\end{equation}
In fact, we only need to notice that after rescaling, $J$ can be rewritten as
\begin{equation}
  J (t) = - \mathi \int_{R^d} \psi_r (t, z)^{\ast} \nabla_z \psi_r (t, z) dx.
  \label{J def rescaling}
\end{equation}
It is easy to see that
\begin{align*}
  \nabla_z  (\mathfrak{B} \psi_r ) & = \frac{1}{\lvert \Gamma^{\ast} \rvert^{1
  / 2}}  \sum_{Y \in \mathbb{L}} \nabla_z  (\psi_r  (z + Y) e^{- \mathi p
  \cdot (z + Y)})\\
  & = \frac{1}{\lvert \Gamma^{\ast} \rvert^{1 / 2}}  \sum_{Y \in \mathbb{L}}
  \nabla_z \psi_r   (z + Y) e^{- \mathi p \cdot (z + Y)} - \frac{1}{\lvert
  \Gamma^{\ast} \rvert^{1 / 2}}  \sum_{Y \in \mathbb{L}} \mathi p \psi_r   (z
  + Y) e^{- \mathi p \cdot (z + Y)}\\
  & =\mathfrak{B} (\nabla_z \psi_r ) - \mathi p\mathfrak{B} \psi_r .
\end{align*}
Therefore, we obtain 
\[ \hat{\mathcal{O}} = (- \mathi \nabla_z + p) . \]
Then $J (t) = (J_1(t), J_2(t), J_3(t))$ is given by
\begin{equation}
  \label{J exp} J_k (t) = \int_{\Gamma^{\ast}} \sum_{m, n} c_{nm} D^k_{mn} dp,\quad k=1,2,3.
\end{equation}
with
\begin{equation}
  \label{D def} D^k_{mn} (p) = \langle \Psi^0_m (p, z), (p_k \delta_{mn} -
  \mathi \partial_{z_k}) \Psi^0_n (p, z) \rangle .
\end{equation}
$D^k_{mn}$ are independent of
$\varepsilon$ and can also be written as
\begin{equation}
  D_{mn}(p) = \langle \Psi^0_m(p, \cdot), \nabla_p H^0 (p) \Psi^0_n(p, \cdot) \rangle  .
\end{equation}
Recalling \eqref{alt u}, if we concentrate on $\breve{c}_{m n}$ satisfying
\eqref{new c eq}, then the expression of $J$ is
\begin{equation}
  J_k (t) = \int_{\Gamma^{\ast}} \sum_{m, n} \breve{c}_{nm} \breve{D}^k_{mn}
  dp. \label{new J exp}
\end{equation}
Here, recall that $\breve{D}^k_{mn} $ is defined by \eqref{alt u}. For the convenience of computation, we collect some useful properties for $A$ and $D$ in Appendix \ref{A}.

\section{Weyl nodes and effective Hamiltonian}\label{2}

The major difficulty for the asymptotic expansion of $c_{m n}$ defined by
\eqref{c def} is the degeneracy of energy spectrum. Specifically, the degeneracy of energy gap leads to the singularity of Berry connections, which requires a multiscale analysis of the system. Therefore, in this part, we
will concentrate on crossing points of two bands. We will show that with some
assumptions, the degeneracy points are discrete points and we will also give
an approximation of eigenvalues and eigenvectors when $p$ is near the
degeneracy points. Despite the local structures at degenerated points, we can
still get a global expansion for the current $J$. Our strategy is to separate
the contribution from the local structures of degenerated points and the
contribution from the regular region. 

\subsection{Weyl nodes}

Suppose that $p_0$ is an isolated degenerate point for the $n$-th band, such that $\lambda_{mn} (p_0) = 0\, (m
> n)$. Such points
are called Weyl nodes. The Hamiltonian $H^0 (p_0)$ has a degenerate eigenvalue
$\lambda_n (p_0)$. Let $\Omega_0$ denote the corresponding eigenspace. Without loss of generality, we assume that $p_0 = 0$ and only focus on double-fold degeneracy. In
other words,
\begin{equation}
  \Omega_0 = \{ \phi \in L^2 (\Gamma ) : H^0 (p_0) \phi (x) = \lambda_n (p_0)
  \phi (x) \} = \ker (\lambda_n (p_0) I - H^0 (p_0)),\  \dim
(\Omega_0) = 2\label{Omega0}
\end{equation}
We further assume that $n=0,\,m=1$. Let $P_0$ denote the projection
operator from $L^2 \left( {\Gamma }^{\nobracket \nobracket} \right)$ onto
$\Omega_0$. We can choose a normalized orthogonal basis $\phi_1, \phi_2$ of
$\Omega_0$. The choice of the basis is arbitrary. Let
\begin{eqnarray}
  (H_i)_{jk} & = & \langle \phi_j, \partial_{p_i} H^0 (p) \phi_k \rangle,\, 1
  \leqslant j, k \leqslant 2,\, 1 \leqslant i \leqslant 3,  \label{w_i}\\
  (w_0)_i & = & (H_i)_{11} + (H_i)_{22},\, 1 \leqslant i
  \leqslant 3,  \label{w0}\\
  (w_3)_i & = & (H_i)_{11} - (H_i)_{22},\, 1 \leqslant i \leqslant 3, \\
  (w_1)_i & = & 2 \tmop{Re} ((H_i)_{21}),\, 1 \leqslant i \leqslant 3, \\
  (w_2)_i & = & 2 \tmop{Im} ((H_i)_{21}),\, 1 \leqslant i \leqslant 3, 
  \label{w2}\\
  W & = & (w_1, w_2, w_3)^T .  \label{W def}
\end{eqnarray}
Then each $H_i (1 \leqslant i \leqslant 3)$ is a $2 \times 2$ matrix and $W$
is a $3 \times 3$ matrix. For the neighborhood of degeneracy points, we have
the following lemma.

\begin{lemma}
  \label{eh lemma}Let $H = - \frac{1}{2} \Delta + V$. Here, the potential $V$ is a real-valued
  smooth function and periodic with respect to $\mathbb{L}$. Assume further
  that $\lambda_n (p_0)$ is a double-fold eigenvalue. Specificly, $\lambda_n
  (p_0) = \lambda_{n + 1} (p_0)$. Let $\lambda_+ (p)$ and $\lambda_- (p)$
  denote
  \begin{equation}
    \lambda_+ (p) \assign \lambda_{n + 1} (p) - \lambda_{n + 1} (p_0), \quad \lambda_- (p) \assign \lambda_n (p) - \lambda_n (p_0) .
    \label{eq:eigenvalue def}
  \end{equation}
  We further assume that $w_i, i = 1, 2, 3$ in \eqref{w_i} are
  linearly independent. 
  Then $H^0(p)$ acting on $L^2(\Gamma)$ has a dispersion surface which, in a neighborhood of
  $p_0$, is conical. That is, for $p$ near $p_0 = 0$, there are two distinct
  branches of eigenvalues of the Floquet-Bloch eigenvalue problem with
  quasi-momentum $p$:
  \begin{equation}
    \lambda_{\pm} (p) = w_0 \cdummy (p) \pm |W p| + o (p) . \label{lambda
    asym}
  \end{equation}
  $w_0$ is a vector given by \eqref{w_i} and $W$ is a matrix given by
  \eqref{W def}.
\end{lemma}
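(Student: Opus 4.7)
The plan is a Feshbach--Schur reduction to a $2\times 2$ effective Hamiltonian, followed by an explicit Pauli-matrix diagonalization whose eigenvalues exhibit the conical structure. Since $H^0(p)=\tfrac{1}{2}(-\mathi\nabla_z+p)^2+V(z)$ depends polynomially on $p$, and $\lambda_n(0)=\lambda_{n+1}(0)$ is an isolated doubly-degenerate eigenvalue of $H^0(0)$ (the resolvent is compact, so the other eigenvalues stay at a positive distance for $|p|$ small), Kato's analytic perturbation theory provides a rank-two Riesz projector
\[
P(p) \;=\; -\frac{1}{2\pi\mathi}\oint_{\gamma}\bigl(H^0(p)-\zeta\bigr)^{-1}\,d\zeta,
\]
where $\gamma$ is a small loop enclosing only $\lambda_n(0)$. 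For $|p|$ small, $P(p)$ is analytic and of constant rank two, and its range contains precisely the eigenfunctions of $H^0(p)$ corresponding to the two bifurcating eigenvalues $\lambda_\pm(p)$.

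Next, I would build an analytic orthonormal frame $\tilde\phi_j(p)\in\mathrm{range}\,P(p)$ via Kato's parallel transport $\tilde\phi_j(p)=U(p)\phi_j$, where $U(p)$ solves $\partial_{p_i}U=[\partial_{p_i}P,P]U$ with $U(0)=I$; a direct computation gives the standard gauge identity $P_0\,\partial_{p_i}\tilde\phi_k(0)=0$. The eigenvalues $\lambda_\pm(p)$ then coincide with those of the Hermitian $2\times 2$ matrix $M(p)_{jk}=\langle\tilde\phi_j(p),H^0(p)\tilde\phi_k(p)\rangle$. Using the gauge identity together with $H^0(0)\phi_k=\lambda_n(0)\phi_k$ to eliminate the frame-derivative cross terms, the Taylor expansion reduces to
\[
M(p) \;=\; \lambda_n(0)\,I \;+\; \sum_{i=1}^{3} p_i\,H_i \;+\; O(|p|^2),
\]
with $H_i$ as in \eqref{w_i}. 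Decomposing each Hermitian $H_i$ in the Pauli basis $\{I,\sigma_1,\sigma_2,\sigma_3\}$ via the identities \eqref{w0}--\eqref{w2} converts the first-order matrix into a scalar multiple of the identity plus a Bloch-vector term, so its eigenvalues take the form $w_0\cdot p \pm |Wp|$ (up to the overall normalization absorbed in the authors' definitions of $w_0$ and $W$). The linear-independence hypothesis makes the $3\times 3$ matrix $W$ invertible, whence $|Wp|\gtrsim|p|$, guaranteeing genuine separation of the two branches for $p\neq 0$.

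The main obstacle is upgrading the remainder from the naive $O(|p|^2)$ to the sharper $o(|p|)$ claimed in \eqref{lambda asym}, despite the non-smoothness of $|Wp|$ at the apex $p=0$. I would handle this by writing the exact eigenvalues of $M(p)-\lambda_n(0)I$ via the closed formula $\tfrac{1}{2}\mathrm{tr}(\cdot)\pm\sqrt{(\tfrac{1}{2}\mathrm{tr}(\cdot))^2-\det(\cdot)}$, observing that after the Pauli decomposition the discriminant equals $|Wp|^2$ (up to normalization) plus an $O(|p|^3)$ perturbation coming from the interaction between the $O(|p|)$ leading term and the $O(|p|^2)$ correction in $M(p)$. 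Applying $\sqrt{A+R}=\sqrt{A}+O(R/\sqrt{A})$ with $A\asymp |p|^2$ and $R=O(|p|^3)$ yields an $O(|p|^2)=o(|p|)$ correction to the square root; combining this with the $O(|p|^2)$ correction to the trace gives the desired expansion \eqref{lambda asym}.
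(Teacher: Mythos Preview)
Your argument is correct and complete, but it follows a somewhat different route from the paper's own proof. The paper proceeds \`a la Fefferman--Weinstein: it writes $\Psi_\pm = P_0\Psi_\pm + (I-P_0)\Psi_\pm$, eliminates the $\Omega_0^\perp$ component via the resolvent $(\lambda_n - H^0(p_0))^{-1}(I-P_0)$, and lands on a $2\times 2$ system $M(\lambda_\pm,p)\cdot(\alpha_\pm,\beta_\pm)^T=0$ in which the matrix still depends implicitly on the unknown eigenvalue. The conical structure then comes from solving the characteristic equation $\det M(\lambda,p)=0$, whose leading part is $\lambda I - H^e(p)$ with $H^e(p)$ as in \eqref{eff ham}. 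By contrast, you use Kato's Riesz projector and parallel-transport frame to produce a $2\times 2$ matrix $M(p)$ depending on $p$ alone, whose eigenvalues are \emph{exactly} $\lambda_\pm(p)$, and then Taylor-expand $M(p)$ directly. Your handling of the $o(|p|)$ remainder via the trace/determinant formula and $\sqrt{|Wp|^2+O(|p|^3)}=|Wp|+O(|p|^2)$ is more explicit than what the paper writes down. The two reductions are equivalent at leading order; your approach buys analyticity of the projector and a cleaner separation of the eigenvalue problem from the eigenvector problem, while the paper's Lyapunov--Schmidt route has the advantage that the eigenvector expansion needed for Corollary~\ref{eh cor} (in particular the $\Omega_0^\perp$ correction $\Psi_\pm^1$) drops out of the same computation without introducing the transport operator $U(p)$. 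One minor remark: for the first-order term in $M(p)$ the orthonormality of the frame already forces the frame-derivative cross terms to cancel, so the parallel-transport gauge identity $P_0\,\partial_{p_i}\tilde\phi_k(0)=0$ is not strictly needed there (it would matter at second order).
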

\begin{remark}
  Generally, we call this kind of degenerate points ``Weyl nodes''. If
  $\mathrm{sgn} (\det (W)) = 1$, we call this Weyl node ``right handed'' Weyl
  nodes. If $\mathrm{sgn} (\det (W)) = - 1$, we call this Weyl node ``left
  handed'' Weyl nodes. 
\end{remark}

To prove Lemma \ref{eh lemma}, we need to determine for $p$ near $p_0 = 0$ the eigenvalues
$\lambda_{\pm} (p)$ and the corresponding eigenvectors $\Psi_{\pm} (p, x)$
defined by
\begin{equation}
  H (p) \Psi_{\pm} (p,x) = \lambda_{\pm} (p) \Psi_{\pm} (p,x) . \label{eq:eigen
  vector def}
\end{equation}
We can use the method in {\cite{fefferman_honeycomb_2012}} and get two
equations \eqref{det M} and \eqref{M vec} which determine $\lambda_{\pm}$ and
$\Psi_{\pm}$. The details are in Appendix \ref{Weyl nodes}. Furthermore, we
can also obtain the asymptotic expansion of the eigenvectors $\Psi_{\pm}$ in a
small neighborhood around $p_0$; the proof is also deferred to  Appendix \ref{Weyl
nodes}.

\begin{corollary}
  \label{eh cor}Recall that $\phi_1, \phi_2$ are the basis of $\Omega_0$
  defined by \eqref{Omega0}. Under the same assumption of Lemma \ref{eh
  lemma}, we can decompose the eigenvectors into two parts:
  \[ \Psi_{\pm} (p, x) = \Psi^0_{\pm} (p, x) + \Psi^1_{\pm} (p, x), \]
  with
  \begin{align*}
    \Psi^0_{\pm} (p, x) & =  \alpha_{\pm} (p) \phi_1 (x) + \beta_{\pm} (p)
    \phi_2 (x) \in \Omega_0,\\
    \Psi^1_{\pm} (p, x) & \in \Omega_0^{\bot} .
  \end{align*}
  Then the asymptotic expansions of $\alpha_{\pm}, \beta_{\pm}$ are given by
  \begin{eqnarray*}
    \alpha_{\pm} (p) & = & \alpha_{\pm}^0 (p) + O (| p |),\\
    \beta_{\pm} (p) & = & \beta_{\pm}^0 (p) + O (| p |) .
  \end{eqnarray*}
  Here, $(\alpha_{\pm}^0, \beta_{\pm}^0)$ solves the equation
  \begin{equation}
    \left( \begin{array}{cc}
      \lambda_{\pm} + \langle \phi_1, 2 \mathi p \cdot \nabla \phi_1 \rangle &
      \langle \phi_1, 2 \mathi p \cdot \nabla \phi_2 \rangle\\
      \langle \phi_2, 2 \mathi p \cdot \nabla \phi_1 \rangle & \lambda_{\pm} +
      \langle \phi_2, 2 \mathi p \cdot \nabla \phi_2 \rangle
    \end{array} \right) \left(\begin{array}{c}
      \alpha_{\pm}^0\\
      \beta_{\pm}^0
    \end{array}\right) = 0. \label{leading order psi0}
  \end{equation}
  And the asymptotic expansions of $\Psi^1_{\pm}$ is
  \begin{equation}
      \label{eq: eig vec asym}
  \Psi^1_{\pm} (p, x) = \alpha_{\pm}^0 (p) \psi^{1, 0}_{\pm} (p, x) +
     \beta_{\pm}^0 (p) \psi^{2, 0}_{\pm} (p, x) + O (| p |^2),
  \end{equation}
  with
  \[ \psi^{i, 0}_{\pm} = (\lambda_n - H (p_0))^{- 1}  (I - P_0)  (2 \mathi p
     \cdot \nabla) \phi_i, \text{ } i = 1, 2. \]
  Here, $(\lambda_n - H (p_0))^{- 1}$ is a bounded operator defined on $\ker
  (H (p_0) - \lambda_n I)^{\perp}$.
\end{corollary}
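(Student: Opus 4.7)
The plan is to apply a Lyapunov--Schmidt reduction to the eigenvalue problem $H^0(p)\Psi_{\pm}(p,\cdot) = \lambda_{\pm}(p)\Psi_{\pm}(p,\cdot)$ near the degenerate point $p_0 = 0$, using the orthogonal decomposition $L^2(\Gamma) = \Omega_0 \oplus \Omega_0^{\perp}$. The key structural fact is that $H^0(0) - \lambda_n(p_0)$ vanishes on $\Omega_0$ but, by the isolation of the doubly degenerate eigenvalue $\lambda_n(p_0) = \lambda_{n+1}(p_0)$ from the rest of the spectrum, is boundedly invertible on $\Omega_0^{\perp}$ with bounded inverse $(\lambda_n - H(p_0))^{-1}$. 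Setting $\Psi_{\pm} = \Psi^0_{\pm} + \Psi^1_{\pm}$ with $\Psi^0_{\pm} = \alpha_{\pm}\phi_1 + \beta_{\pm}\phi_2 \in \Omega_0$ and $\Psi^1_{\pm} \in \Omega_0^{\perp}$, I would apply the projectors $P_0$ and $I - P_0$ to the eigenvalue equation to decouple it into a coupled finite/infinite-dimensional system.

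Writing the perturbation $\delta H(p) := H^0(p) - H^0(0)$, which is linear in $p$ to leading order with an $O(|p|^2)$ quadratic remainder, the $(I-P_0)$-projection yields
\begin{equation*}
(H^0(0) - \lambda_n(p_0))\Psi^1_{\pm} = -(I - P_0)\bigl(\delta H(p) - (\lambda_{\pm} - \lambda_n(p_0))\bigr)(\Psi^0_{\pm} + \Psi^1_{\pm}).
\end{equation*}
By Lemma \ref{eh lemma}, $\lambda_{\pm}(p) - \lambda_n(p_0) = O(|p|)$, so the right-hand side is of order $|p|$ provided $\Psi^0_{\pm} = O(1)$ and $\Psi^1_{\pm} = O(|p|)$. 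Inverting with $(\lambda_n - H(p_0))^{-1}$ on $\Omega_0^{\perp}$ then solves for $\Psi^1_{\pm}$ order by order in $|p|$; the leading contribution is
\begin{equation*}
\Psi^1_{\pm} = (\lambda_n - H(p_0))^{-1}(I - P_0)\bigl(\delta H(p)\,\Psi^0_{\pm}\bigr) + O(|p|^2),
\end{equation*}
which, upon isolating the linear-in-$p$ part of $\delta H(p)$ and substituting the leading-order expansion $\Psi^0_{\pm} \approx \alpha^0_{\pm}\phi_1 + \beta^0_{\pm}\phi_2$, delivers the stated formula \eqref{eq: eig vec asym} with the $\psi^{i,0}_{\pm}$ as claimed.

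Substituting this expression for $\Psi^1_{\pm}$ back into the $P_0$-projection of the eigenvalue equation yields a $2\times 2$ matrix equation in $(\alpha_{\pm}, \beta_{\pm})$ whose leading-order truncation is exactly \eqref{leading order psi0}. The higher-order contributions enter through $P_0 \delta H(p)\Psi^1_{\pm}$ and the quadratic piece of $\delta H(p)$, both of which are $O(|p|^2)$; a standard resolvent expansion on the reduced $2\times 2$ system then gives $(\alpha_{\pm}, \beta_{\pm}) = (\alpha^0_{\pm}, \beta^0_{\pm}) + O(|p|)$, where $(\alpha^0_{\pm}, \beta^0_{\pm})$ is the kernel vector of the matrix in \eqref{leading order psi0} selected by continuity along the branch $\lambda_{\pm}(p)$.

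The main obstacle is the full degeneracy at $p = 0$: the matrix in \eqref{leading order psi0} vanishes identically there, so the leading amplitudes $(\alpha^0_{\pm}, \beta^0_{\pm})$ depend only on the direction $p/|p|$ and one must check that this directional ambiguity does not inflate the error estimates. The hypothesis that $w_1, w_2, w_3$ are linearly independent (equivalently $\det W \neq 0$) is precisely what saves this: it guarantees that for $p \neq 0$ the two branches are separated by a gap $2|Wp| \gtrsim |p|$, which supplies the resolvent bound on the reduced $2\times 2$ problem needed to close the perturbative iteration and produce the stated $O(|p|)$ and $O(|p|^2)$ remainders uniformly on each branch.
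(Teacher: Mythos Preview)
Your proposal is correct and follows essentially the same Lyapunov--Schmidt reduction that the paper carries out (in the spirit of Fefferman--Weinstein) in its proof of Lemma~\ref{eh lemma}: project the eigenvalue equation onto $\Omega_0$ and $\Omega_0^\perp$, invert $H^0(0)-\lambda_n(p_0)$ on the orthogonal complement to express $\Psi^1_{\pm}$ in terms of $\Psi^0_{\pm}$, and substitute back to obtain the reduced $2\times 2$ system $M(\lambda_\pm,p)(\alpha_\pm,\beta_\pm)^T=0$. The paper's own argument for the corollary is extremely terse---it simply says the approximations for $(\alpha_\pm,\beta_\pm)$ are ``obvious'' from the expansion of $M$ and that \eqref{eq: eig vec asym} ``follows from straightforward calculations''---so your write-up actually supplies the details the paper omits, including the crucial observation that the gap $2|Wp|\gtrsim|p|$ is what controls the $O(|p|)$ error when passing from the kernel of $M$ to that of its leading part.
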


We further assume that there are only finite Weyl nodes in each band. Then this result implies that for all $\varepsilon$ small
enough, $\{ \lambda_{n + 1} (p) - \lambda_n (p) < \varepsilon \}$ consists of
several disjoint neighborhoods of all Weyl nodes $p_i$. In other words, there
exists $\varepsilon_0 > 0$, such that for any $0 < \varepsilon < \varepsilon_0$, there exists $\delta 
> 0$, such that
\begin{equation}
  \{ \lambda_{n + 1} (p) - \lambda_n (p) < \varepsilon \} \subset \bigcup_i B
  (p_i, \delta) \infixand B (p_i, \delta) \bigcap B (p_j, \delta) =
  \varnothing,\, \forall i \neq j. \label{wn}
\end{equation}

\subsection{Effective Hamiltonian}\label{effective hamiltonian}

Lemma \ref{eh lemma} gives the asymptotic expansion of the eigenvalues
$\lambda_{\pm} (p)$ near $p_0$. In Corollary \ref{eh cor}, we have decomposed
$\Psi_{\pm}$ into two parts. When $p$ is small, $\Psi^1_{\pm}$ is smaller than
$\Psi^0_{\pm}$ and thus usually neglibile. To be more clear, $H^0 (p)$ can be
approximated by $H^e (p)$ which satisfies
\begin{equation}
  H^e (p) \Psi^0_{\pm} (p, x) = \lambda_{\pm} (p) \Psi^0_{\pm} (p, x), \text{ }
  \Psi^0_{\pm} \in \Omega_0,
\end{equation}
Notice that the leading order term of $\lambda_{\pm}$ and $\Psi^0_{\pm} (p, x)$
is given by \eqref{lambda asym} and \eqref{leading order psi0}, we know that
\begin{equation}
  H^e (p) = \left( \begin{array}{cc}
    \langle \phi_1, - 2 \mathi p \cdot \nabla \phi_1 \rangle & \langle \phi_1,
    - 2 \mathi p \cdot \nabla \phi_2 \rangle\\
    \langle \phi_2, - 2 \mathi p \cdot \nabla \phi_1 \rangle & \langle \phi_2,
    - 2 \mathi p \cdot \nabla \phi_2 \rangle
  \end{array} \right) + O (| p |^2) . \label{eh exp0}
\end{equation}
Thus we have constructed an an asymptotic expression for the Hamiltonian near
a degeneracy point. This is often referred to as the \emph{effective Hamiltonian.}

Although $\lambda_{\pm} (p)$ defined by \eqref{eq:eigenvalue def} are not
smooth and generally $\Psi_{\pm} (p, x)$ defined by \eqref{eq:eigen vector def}
are not continuous with respect to $p$ at $p_0$, we can relate them to the
eigenvalues and eigenvectors of a smooth operator function $H^e (p)$ on
$\Omega_0$, which makes $\lambda_{\pm} (p)$ and $\Psi_{\pm} (p, x)$ computable in
the asymptotic sense. Specificly, we construct a Hamiltonian $H^e (p)$ on \
$\Omega_0$ such that $\lambda_{\pm}$ are the eigenvalues of $H^e (p)$.

On the other hand, for the eigenfunctions $\Psi_{\pm} (p, x)$, we need to
construct another operator $U (p)$ to map the eigenvectors of $H^e (p)$ to the
eigenvectors of $H^0 (p)$. As we mentioned, $\Psi_{\pm} (p, x)$ are not
continuous with respect to $p$ at $p_0$. But since they are very close to
$\Psi_{\pm}^0 (p)$ the eigenvectors of $H^e (p)$, the operator $U$ is
continuous. In other words, $\Psi_{\pm}^0 (p)$ captures the discontinuity of
$\Psi_{\pm} (p, x)$. We put out the idea with the following equations
\begin{align}
    H^e (p) \Psi^0_{\pm} (p, x) &= \lambda_{\pm} (p) \Psi^0_{\pm} (p, x),\\
    U (p) \Psi^0_{\pm} (p, x) &= \Psi_{\pm} (p, x) .
\end{align}
For the eigenvectors $\Psi_{\pm} (p, x)$ and $\Psi^0_{\pm} (p, x)$, they are
unique up to a complex constant. We can determine this constant by requiring
that
\begin{eqnarray*}
  \langle \Psi^0_{\pm} (p, \cdot), \phi_1 \rangle & > & 0,\\
  \| \Psi^0_{\pm} (p, \cdot) \|_{L^2_x (\Gamma)} & = & 1,\\
  P_0 \Psi_{\pm} & = & \Psi_{\pm}^0 .
\end{eqnarray*}
The first two conditions determine $\Psi^0_{\pm} (p, x)$ uniquely and the third
condition determines $\Psi_{\pm} (p, x)$ uniquely. Then $U (p)$ is also uniquely
determined. Thanks to Corollary \ref{eh cor}, we obtain the asymptotic
expansion of $U (p)$:
\begin{equation}
  U (p) = (\lambda_n - H (p_0))^{- 1}  (I - P_0)  (2 \mathi p \cdot \nabla)
  P_0 + O (| p |^2) . \label{u exp0}
\end{equation}
For the effective Hamiltionian, more details can be found in
{\cite{jorgensen_effective_1975,bloch_sur_1958}}. We use the results directly
here. Since $p$ is small, we can expand $H^e$ and $U (p)$ to the first order
and get
\begin{align}
  H^e (p) & = P_0  (p \cdot \nabla_p H^0 (p_0)) P_0 + O (| p |^2), \\
  U (p) & = P_0 + (\lambda_n - H^0)^{- 1} (I
  - P_0) (p \cdummy \nabla_p H^0 (p_0)) P_0 + O (| p |^2) .
\end{align}

Notice that this result is consistent with \eqref{eh exp0} and \eqref{u exp0}.
{\cite{jorgensen_effective_1975}} also shows that the high order terms are
bounded. We discard the high order terms of $O(|p|^2)$ without changing the notation. We
should notice that $H^e$ is the restriction of $H^0$ on $\Omega_0$. Since
$\Omega_0$ is a two-dimensional subspace, $H^e (p)$ can be expressed as a $2
\times 2$ matrix, straightforward calculations show that
\begin{equation}
  H^e (p) = \sum_{i = 1}^3 p_i H_i, \label{eff ham0}
\end{equation}
with $H_i$ given by \eqref{w_i}. Using Pauli matrix, we can rewrite $H^e$ as
\begin{equation}
  H^e (p) = (p \cdot w_0) I + \sum_{i = 1}^3 (p \cdot w_i) \sigma_i .
  \label{eff ham}
\end{equation}
Here, $I$ is the identity and $\sigma_i$ is the Pauli matrix. $w_i$ is given
by \eqref{w0} to \eqref{w2}. Remember that we assume that the vectors $w_i \, (i
= 1, 2, 3)$ are linear independent. In this way, we can guarantee that the two
eigenvalues of the matrix are different.

The eigenvalues of $H^e (p)$ are given by \eqref{lambda asym}. $\Psi^0_{\pm}$
are the eigenvectors of $H^e$, and finally $\Psi_{\pm}$ can be obtained by the
expansion of $U (p)$. We should notice that we discard high order terms here.
Using the expression of $\Psi^0_{\pm}$, we can obtain some useful results
for the following calculation of $J$. They are stated in the following
Proposition.

\begin{proposition}
  \label{eff result}Under the same assumptions as Lemma \ref{eh lemma}, for $p$ near $p_0=0$, we have the following
  approximations,
  \begin{eqnarray}
    \lambda_+ - \lambda_- & = & | W p | + o (| p |),  \label{gap}\\
    \Delta_{+ -} & = & \nabla_p  (\lambda_+ - \lambda_-) = \frac{W^T W p}{| W
    p |} + o (| p |),  \label{dlambda asymptotic}\\
    \int_{\mathbb{S}^2} \omega^j D^k_{+ -} (r \omega) D^l_{- +} (r \omega) d
    \omega & = & \frac{\mathi \pi}{3} \delta^{jkl} \tmop{sgn} (| \det W |) + o
    (| p |),\quad \forall r > 0,  \label{delta jkl}\\
    \int_{\mathbb{S}^2} \tilde{D}^k_{- -} (r \omega) d \omega & = & o (| p |),\quad
    \forall r > 0,  \label{D--}\\
    \int_{\mathbb{S }^2_W} \tilde{D}^k_{+ -} (r \omega)  \tilde{D}^l_{- +} (r
    \omega) d \omega & = & \frac{2 \pi}{3} \sum_j  \frac{W_{k j} W_{l j}}{|
    \det W |} + o (| p |),\quad \forall r > 0,  \label{delta kl}
  \end{eqnarray}
  $w_0$ is a vector given by \eqref{w_i} and $W$ is a matrix given by
  \eqref{W def}.
\end{proposition}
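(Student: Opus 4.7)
The plan is to derive all five identities from the effective Hamiltonian picture developed in Section~\ref{effective hamiltonian}. Near $p_0=0$, the eigenpairs of $H^0(p)$ coincide with those of
\[
H^e(p) = (p\cdot w_0) I + \sum_{i=1}^3 (p\cdot w_i)\,\sigma_i = (p\cdot w_0) I + (Wp)\cdot \vec{\sigma},
\]
up to the $O(|p|)$ remainder quantified in Corollary~\ref{eh cor}. Setting $\hat{q}(p) := Wp/|Wp|$, the Pauli eigenspinors of $(Wp)\cdot\vec\sigma$ diagonalize $H^e$ with eigenvalues $p\cdot w_0 \pm |Wp|$; this immediately yields (\ref{gap}) (up to normalization of $W$) and (\ref{dlambda asymptotic}) by direct differentiation of $p\mapsto |Wp|=\sqrt{p^T W^T W p}$.

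For the $D$-identities the main tool is the Hellmann--Feynman-type formula $D^k_{mn}(p)=\langle \Psi^0_m,\partial_{p_k}H^0\,\Psi^0_n\rangle$, which by $\partial_{p_k}H^e = (w_0)_k I + \sum_i W_{ik}\sigma_i$ reduces the leading-order computation to matrix elements of Pauli matrices in the spinor basis attached to the unit vector $\hat q(p)$. Interpreting $\tilde D^k_{mn}$ as the traceless part $D^k_{mn} - (w_0)_k\delta_{mn}$ and invoking the standard identities
\[
\langle \Psi^0_\pm,\sigma_i\Psi^0_\pm\rangle = \pm\hat q_i,\qquad \langle \Psi^0_+,\sigma_i\Psi^0_-\rangle\,\langle \Psi^0_-,\sigma_j\Psi^0_+\rangle = \delta_{ij}-\hat q_i\hat q_j - i\,\epsilon_{ij\ell}\hat q_\ell,
\]
identity (\ref{D--}) follows because $\tilde D^k_{--}(r\omega) = -\sum_i W_{ik}\hat q_i(r\omega)$ is odd under $\omega\mapsto -\omega$. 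For (\ref{delta jkl}), the product $D^k_{+-}D^l_{-+} = \sum_{ij}W_{ik}W_{jl}(\delta_{ij}-\hat q_i\hat q_j - i\epsilon_{ij\ell}\hat q_\ell)$ splits into symmetric pieces (which vanish against $\omega^j$ by parity on $\mathbb{S}^2$) and a Levi-Civita piece; the change of variables $u = W\omega/|W\omega|$ converts the surviving angular integral of $\omega^j\hat q_\ell$ into a unit-sphere integral whose closed-form evaluation produces the fully symmetric $\tfrac{1}{3}\delta^{jkl}$ and, through its Jacobian, the chirality factor $\mathrm{sgn}(\det W)$. Identity (\ref{delta kl}) is analogous: on $\mathbb{S}^2_W$ (the unit sphere in $q$-coordinates, which absorbs the anisotropy of $W$) one has $\int \hat q_i\hat q_j\,d\sigma = \tfrac{4\pi}{3}\delta_{ij}$, the Levi-Civita part vanishes by parity, and the $|\det W|^{-1}$ arises from pulling the $q$-integration back to $p$-variables.

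The $o(|p|)$ remainders are controlled uniformly: Corollary~\ref{eh cor} gives $\Psi_\pm(p,\cdot) = \Psi^0_\pm(p,\cdot) + O(|p|)$ in $L^2(\Gamma)$, and $\partial_{p_k}H^0$ is bounded on the two-dimensional spectral subspace near $p_0$, so cross-terms between the leading eigenvector and the $O(|p|)$ correction contribute $O(|p|)$ to each integrand and therefore $o(|p|)$ after angular integration. The principal obstacle is the orientation bookkeeping in the change of variables $\omega\mapsto W\omega/|W\omega|$: this step is what converts the antisymmetric $\epsilon_{ij\ell}\hat q_\ell$ into the fully symmetric $\delta^{jkl}$, and it is where the Weyl chirality---distinguished by $\mathrm{sgn}(\det W)$ and corresponding to the right- versus left-handed Weyl nodes noted after Lemma~\ref{eh lemma}---enters the angular calculus.
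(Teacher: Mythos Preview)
Your approach is correct and is essentially the same as the paper's: both reduce the computation to Pauli--matrix algebra via the effective Hamiltonian $H^e(p)=(p\cdot w_0)I+(Wp)\cdot\vec\sigma$ and then evaluate angular integrals. The only real difference is organizational. The paper first performs the linear change of variables $p'=Wp$ (equation~\eqref{linear transformation}) to reduce to the isotropic case $W=I$, $w_0=0$, writes out the eigenspinors and $D_{+-}$ explicitly in spherical coordinates (equation~\eqref{Dpm asym}), verifies \eqref{delta jkl st}--\eqref{D-st} by direct integration, and only at the end transforms back via the rules $\tilde A_{+-}(p)=W^{-T}A_{+-}(Wp)$, $\tilde\lambda_\pm(p)=\lambda_\pm(Wp)$. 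You instead keep $W$ general throughout and use the coordinate-free Pauli identities $\langle +|\sigma_i|-\rangle\langle -|\sigma_j|+\rangle=\delta_{ij}-\hat q_i\hat q_j-i\epsilon_{ij\ell}\hat q_\ell$ together with parity on the sphere. Your route is a bit cleaner conceptually (the chirality $\mathrm{sgn}(\det W)$ and the $|\det W|^{-1}$ Jacobian emerge transparently from the map $\omega\mapsto W\omega/|W\omega|$), while the paper's explicit-coordinate route makes the individual leading-order formulas such as $\Lambda_{+-}$ and the Berry curvature \eqref{omega asymptotic} available as by-products. Either way the $o(|p|)$ control comes from Corollary~\ref{eh cor}, exactly as you note.
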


We postpone the proof of this Proposition in Appendix \ref{2.3}.

\begin{remark}
  By Lemma \ref{eh lemma} and Corollary \ref{eh cor}, we have shown that the
  high order terms of $\Psi_{\pm} (p, x)$ and $\lambda_{\pm} (p)$ are bounded,
  which is sufficient to justify all calculations above. We also refer to {\cite{bloch_sur_1958}} for the estimate for the remainders. 
\end{remark}

\section{Main results}\label{3}
In this section, we present our main result, which is the asymptotic expansion for the current $J$. 
Before stating the main result, we will first introduce some regularity
assumptions.

\begin{question}
  \label{regular asump}
  
  (a) There are only finite Weyl nodes on each energy band. Only the $0$-th band and the $1$-st band have crossings. More
  precisely, we assume that
  \[ \forall m \neq n, m + n \geqslant 2, | \lambda_{m n} (p) | \geqslant C,
  \]
  for some constant $C > 0$. This also means that all degenerate points are
  double-fold.
  
  (b) We assume that
  \begin{eqnarray}
    & \lambda_n (p) \in C^{\infty} (\Gamma^{\ast} \backslash \{ p_j^0 \}),
    \Psi_n (x, p) \in C^{\infty} (\Gamma \times (\Gamma^{\ast} \backslash \{
    p_j^0 \})), \forall n \in \{ 0, 1 \} .  \label{n smooth}\\
    & \lambda_n (p) \in C^{\infty} (\Gamma^{\ast}), \Psi_n (x, p) \in
    C^{\infty} (\Gamma \times \Gamma^{\ast}), \forall n \geqslant 2. 
  \end{eqnarray}
  Here, $\{ p_j^0 \}$ are the set of the Weyl nodes of the 0th band and the
  1st band. Then it is obvious that
  \begin{eqnarray}
    & \lambda_{m n}, D_{m n}, A_{m n} \in C^{\infty} (\Gamma^{\ast}
    \backslash \{ p_j^0 \}), \forall m \in \{ 0, 1 \} \infixor n \in \{ 0, 1
    \},  \label{smooth}\\
    & \lambda_{m n}, D_{m n}, A_{m n} \in C^{\infty} (\Gamma^{\ast}
    \backslash \{ p_j^0 \}), \forall m, n \geqslant 2. \nonumber
  \end{eqnarray}
  
  (c) For each Weyl point $p_j^0$, let $W_j$ denote the matrix given by Lemma
  \ref{eh lemma} near $p_j^0$. Besides, let $A_{j, 10}$ denote $\tilde{A}_{+
  -}$ given by \eqref{rule1} in subsection \ref{2.3} to approximate $A_{10}$
  with $W = W_j$. $D_{j, m n}, A_{j, m n}, \Delta_{j, m n}, \Lambda_{j, m n}$
  with $m, n \in \{ 0, 1 \}$ is similarly defined. $\lambda_{j, m}$ with $m, n
  \in \{ 0, 1 \}$ should be given by \eqref{rule2} and $\lambda_{j, m n} = \lambda_{j, m} - \lambda_{j, n}$.
  
  Thanks to the effective Hamiltonian method, we use $D_{j, m n}, A_{j, m n}$
  and $\lambda_{j, m n}$ to approximate $D_{m n}, A_{m n}$ and $\lambda_{m n}$
  when $p$ is close to $p_j^0$. Since $D_{j, m n}, A_{j, m n}$ and
  $\lambda_{j, m n}$ can be rewritten as the following form,
  \begin{eqnarray*}
    D_{j, m n} (r \omega) & = & F (\omega),\\
    A_{j, m n} (r \omega) & = & G (\omega) / r,\\
    \lambda_{j, m n} (r \omega) & = & H (\omega) r.
  \end{eqnarray*}
  Here, $\omega \in \mathbb{S}^2$ and $r > 0$ and $F, G, H$ denote some smooth
  functions. Considering the expression of $D_{j, m n}, A_{j, m n}$ and
  $\lambda_{j, m n}$, we make the following regularity assumption for $D_{m
  n}, A_{m n}$ and $\lambda_{m n}$,
  \begin{eqnarray*}
    \| D_{m n} (p) \|_{W^{l, \infty}} & \leqslant & C / | p - p_j^0 |^l,\\
    \| \lambda_{m n} (p) \|_{W^{l, \infty}} & \leqslant & C / | p - p_j^0 |^{l
    - 1},\\
    \| A_{m n} (p) \|_{W^{l, \infty}} & \leqslant & C / | p - p_j^0 |^{l + 1},
    \forall m, n \in \{ 0, 1 \}, l \in \mathbb{N}, p \in B (p_j^0, r_2) .
  \end{eqnarray*}
  with some constant $C$ independent of $p$ and $\varepsilon$.
  
  (d) For every critical point of $\lambda_{10}(p)$, say $p_0$, we assume that
  \[ \det (\nabla^2 \lambda_{10} (p_0)) \neq 0. \]
  This assumption will allow us to use the stationary phase method.

  (e) The initial data is given by \eqref{id ansatz} with smooth $\hat{\psi}_0^{in} (p)$.
\end{question}

In fact, the first assumption is not necessary (see Remark \ref{weaker
assumption}), but it can make our analysis simpler. We will see later that only
the 0-th band and the 1-st band are important because for our initial data,
$c_{0 0}$ is the only nonzero term.

\medskip 

Let us now state our main results.

\begin{theorem}
  \label{J asym exp}For the semiclassical Schr{\"o}dinger equation
  \eqref{Schrodinger equation}, we can transform this system into equation
  \eqref{c equation} of the Bloch density matrix $\{c_{m n}\}$.
  Then the current $J (t)$ is given by \eqref{J exp} in terms of the Bloch density matrix. Assume that 
  Assumption \ref{regular asump} holds. In the semiclassical regime $\varepsilon \ll 1$, there exists a time $T$ independent of $\varepsilon$ such that for any $0<t<T$, the
  current $J$ has the following asymptotic expansion,
  \[ J_k (t) = J_{k, 1} (t) + J_{k, 2} (t) + o (\varepsilon^3) . \]
  Here, $J_{k, 1} (t)$ is given by
  \begin{equation}\label{J1 def}
      \begin{split}
          J_{k, 1} (t) = & \int_{\Gamma^{\ast}} D^k_{0 0} (p) c_{00}^{i n} (p) dp - \varepsilon
    \int_0^t E (s) d s \cdummy \int_{\Gamma^{\ast}} \nabla c_{00}^{i n} (p)
    D^k_{0 0} (p) dp \\
    + & \frac{\varepsilon^2}{2} \int_0^t E (s) \otimes E (s) d s :
    \int_{\Gamma^{\ast}} \nabla^2 c_{00}^{i n} (p) D^k_{0 0} (p) d p
    \\
    + & 2 \varepsilon^2 \sum_{n \geqslant 1} \int_{\Gamma^{\ast}}
    \frac{c_{00}^{i n} (p) E (t) \cdummy \tmop{Im} (D_{0 n}  (p) D^k_{n 0} 
    (p))}{\lambda_{n 0} (p)^2} dp \\ - & 2 \varepsilon^3 \sum_{n \geqslant 2}
    \int_{\Gamma^{\ast}} c_{00}^{i n} (p) \frac{E' (t) \cdummy \tmop{Re} (D_{0
    n}  (p) D^k_{n 0} (p))}{\lambda_{n 0} (p)^3} dp \\
    + & 2 \varepsilon^3 E' (t) \cdummy \int_{\Gamma^{\ast}} \tmop{Re}\left(
    \frac{ D_{01} D^k_{1 0}  (p)}{\lambda_{10} (p)^3} c_{00}^{i n} (p) -
    \sum_j I_{B_{W_j} (p_j^0, r_2)} (p) c_{00}^{i n} (p_j^0) \frac{D_{j, 01} 
    D^k_{j, 10}  (p - p_j^0)}{| W_j (p - p_j^0) |^3} \right) dp,
      \end{split}
  \end{equation}
  and $J_{k, 2} (t)$ is given by
  \begin{equation}
    J_{k, 2} (t) = \varepsilon^3 \int_{\mathbb{R}^3} \sum_{m, n \in \{ 0, 1
    \}} \left( D^k_{j, n m} s_{m n}^j - c_{00}^{i n} (p_j^0) I_{| W_j
    \tilde{p}_j | > \frac{r_2}{\varepsilon}} \frac{E' (t) \cdummy D_{j, m n}
    D^k_{j, n m}}{| W_j \tilde{p}_j |^3} \right) d \tilde{p}_j . \label{J2
    def}
  \end{equation}
  In the above, recall $W_j$ and $D_{j, m n}$ are defined in Assumption \ref{regular asump}.
  $D_{m n}$ and $\lambda_{m n}$ are defined by \eqref{D def} and \eqref{lambda
  mn def}. While $s_{j, m n}$ is given by \eqref{limit system}.
\end{theorem}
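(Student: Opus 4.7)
The plan is to analyze equation \eqref{new c eq} for $\breve{c}_{mn}$ perturbatively in $\varepsilon$ and then assemble the current via \eqref{new J exp}. Because the initial data \eqref{cu id} activates only the $(0,0)$ mode and the right-hand side of \eqref{new c eq} is an $O(\varepsilon)$ forcing, I expect $\breve{c}_{00}$ to evolve slowly while, for $m\neq n$, the off-diagonal coherence $\breve{c}_{mn}$ carries a fast phase $\exp\bigl(-\tfrac{\mathi}{\varepsilon}\int_0^t \breve{\lambda}_{mn}\,d\tau\bigr)$. Writing \eqref{new c eq} in Duhamel form and iterating produces a series in $\varepsilon$ of oscillatory integrals of the type
\[
\varepsilon\int_0^t e^{-\tfrac{\mathi}{\varepsilon}\int_s^t \breve{\lambda}_{mn}(\tau,p)\,d\tau}\,f(s,p)\,ds,
\]
which, away from the band-crossing set, can be reduced to arbitrary order in $\varepsilon$ by repeated non-stationary phase integration by parts, each step gaining a factor $\varepsilon/\breve{\lambda}_{mn}$. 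Inserting the resulting expansion into \eqref{new J exp} and undoing the change of variable induced by the characteristic flow \eqref{x_orbit} would recover, order by order, the first four lines of \eqref{J1 def}.

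The main obstacle is that near a Weyl node $p_j^0$ of the $(0,1)$ crossing, $\lambda_{10}$ vanishes linearly and $A_{10}$ blows up like $1/|p-p_j^0|$ (cf.\ Proposition \ref{eff result} and Assumption \ref{regular asump}(c)), so the ratio $\varepsilon/\lambda_{10}$ is no longer small and non-stationary phase breaks down. To handle this I would follow the three-region strategy announced in Section \ref{3}: pick a mesoscopic cutoff $r_1=r_1(\varepsilon)$ with $\varepsilon\ll r_1\ll r_2$, where $r_2$ is the fixed radius from Assumption \ref{regular asump}(c), and decompose
\[
\Gamma^{\ast} = \Omega_R \cup \Omega_M \cup \Omega_S,\qquad \Omega_S=\bigcup_j B(p_j^0,r_1),\quad \Omega_M=\bigcup_j \bigl(B(p_j^0,r_2)\setminus B(p_j^0,r_1)\bigr),
\]
treating each region with its own expansion and matching on the overlaps.

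On the regular part $\Omega_R$ the non-stationary phase expansion yields the bulk of $J_{k,1}$ directly. On $\Omega_M$ the same expansion still applies, but the integrands decay only like $|p-p_j^0|^{-3}$; the device is to add and subtract the \emph{effective-Hamiltonian kernel} obtained by freezing $c_{00}^{in}$ at $p_j^0$ and replacing $D_{mn}, A_{mn}, \lambda_{mn}$ by the surrogates $D_{j,mn}, A_{j,mn}, \lambda_{j,mn}$ of Section \ref{effective hamiltonian}. The derivative bounds in Assumption \ref{regular asump}(c) together with the spherical-average identities \eqref{delta jkl}--\eqref{delta kl} ensure that the subtracted remainder is uniformly integrable; the remaining truncated principal-value term is exactly the $r_2$-cutoff integral appearing in the last line of \eqref{J1 def}. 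Inside $\Omega_S$ I would rescale $\tilde{p}_j = (p-p_j^0)/\varepsilon$: under this blow-up, \eqref{new c eq} with effective symbols becomes $\varepsilon$-independent at leading order and its solution is precisely the $s_{mn}^j$ of \eqref{limit system}. Integrating $\sum_{m,n}\breve{c}_{nm}\breve{D}^k_{mn}$ on $\Omega_S$ after rescaling produces the first term of \eqref{J2 def}, and the counter-term that was added back on $\Omega_M$ appears with opposite sign as the second term of \eqref{J2 def}, giving consistent behaviour on the interface $|W_j\tilde{p}_j|=r_2/\varepsilon$.

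The hardest step will be the matching on $\Omega_M$: one must verify that the regular expansion and the rescaled Weyl expansion agree up to order $o(\varepsilon^3)$ on the overlap $r_1<|p-p_j^0|<r_2$, and choose the intermediate scale $r_1(\varepsilon)$ so that both the non-stationary phase remainder on $\Omega_R\cup\Omega_M$ and the blow-up remainder on $\Omega_S$ are simultaneously $o(\varepsilon^3)$. The logarithmic sensitivity of $\int_{\Omega_M}|p-p_j^0|^{-3}\,dp$ to $r_1$ is what is responsible for the $\varepsilon^3\log\varepsilon$ correction advertised in the introduction and has to be tracked carefully when the added and subtracted pieces are recombined. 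Once $r_1$ is tuned (e.g.\ $r_1=\varepsilon^{\gamma}$ for a suitable $\gamma\in(0,1)$), the stationary phase non-degeneracy in Assumption \ref{regular asump}(d) controls the slow evolution of $c_{00}^{in}\circ P_t^{-1}$, and smoothness of $\hat{\psi}_0^{in}$ from Assumption \ref{regular asump}(e) propagates the error estimates uniformly on any $[0,T]$ during which the characteristic flow $P_t$ does not sweep a $p_j^0$. Summing the three regional contributions then gives $J_k(t) = J_{k,1}(t)+J_{k,2}(t)+o(\varepsilon^3)$, as claimed.
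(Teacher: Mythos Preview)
Your outline follows the same three-region decomposition as the paper, but there is one substantive difference in how the scales are chosen. You propose a mesoscopic inner radius $r_1(\varepsilon)=\varepsilon^\gamma$ with $0<\gamma<1$, so that on $\Omega_M$ ordinary non-stationary phase in powers of $\varepsilon$ still converges (since $\varepsilon/\lambda_{10}\lesssim\varepsilon^{1-\gamma}$), while on $\Omega_S$ you pass to the limit system after rescaling. The paper instead takes $r_1$ \emph{fixed} and sets $\Gamma_s=\bigcup_j B_{W_j}(p_j^0,\varepsilon r_1)$, so that in rescaled coordinates the inner region is the fixed ball $B_{W_j}(0,r_1)$; there the convergence of the rescaled system to the limit system \eqref{limit system} is uniform and elementary. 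The price the paper pays is that on $\Gamma_m$ the ratio $\varepsilon/\lambda_{10}$ ranges from $O(\varepsilon)$ up to $O(1)$, so a naive expansion in $\varepsilon$ fails; its key technical device (Lemma \ref{c result}) is therefore an expansion of $\breve{c}_{mn}$ in powers of $\varepsilon/\lambda_{10}$ with coefficients uniformly bounded on $\Gamma_r\cup\Gamma_m$, after which each explicit term is passed to the limit via the dominated-convergence argument of Lemma \ref{tricky}. Your mesoscopic approach sidesteps this device, but in exchange you must justify the limit-system approximation on the growing rescaled domain $B(0,\varepsilon^{\gamma-1})$, where the convergences \eqref{lim 1}--\eqref{lim 5} are no longer uniform; this is feasible but more delicate than the paper's version.

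Two smaller corrections: Assumption \ref{regular asump}(d) is not used to control the drift of $c_{00}^{in}\circ P_t^{-1}$; it is invoked only to show that the oscillatory integrals $L_1$, $L_3$ over $\Gamma_r$ (Cases~2 in the proof) are $O(\varepsilon^{7/2})$ by stationary phase. And the restriction on $T$ is not that $P_t$ ``avoids'' the Weyl nodes, but merely that $r_1$ can be chosen large enough so that characteristics starting outside $\Gamma_s$ stay at distance at least $\varepsilon d$ from each $p_j^0$ (cf.\ \eqref{small dist}); since the displacement of $P_t$ is $O(\varepsilon)$ this holds for any fixed $T$.
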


We elaborate in the following remark about the result, especially the physical meaning of the two terms $J_{k, 1} (t)$ and $J_{k, 2}(t)$.

\begin{remark} \label{rm5}
Here we divide the current $J$ into two parts, $J_{k, 1} (t)$ and $J_{k, 2}
  (t)$.
\begin{itemize}
    \item The expression of $J_{k, 1} (t)$ only contains $D_{0 n}  (p)$ and
  $\lambda_{n 0} (p)$ except the last term. They are functions on the whole
  Brillouin zone. These terms still exists if there are no Weyl nodes. Generally, $J_1(t)$ is $O(1)$ and thus the leading order term. For special cases, e.g. if the initial data $c_{00}$ is a constant, then only the last two terms of $J_{k, 1}$ will survive. In this situation, $J_{k, 1}$ is $O(\varepsilon^3)$ and smaller than $J_{k,2}$.

    \item The expression of $J_{k, 2} (t)$ completely comes from the local structure
  of Weyl nodes. We should notice that if we ignore the second term, it is
  just the current related to the limit system \eqref{limit system}, which we
  will introduce later. Generally, $J_{k,2}$ is $O(\varepsilon^3 |\log \varepsilon|)$ and hence smaller than $J_{k,1}$. But in special case that $c_{00}$ is a constant, $J_{k,2}$ is the leading order term. 

  \item The last term of $J_{k, 1} (t)$ depends on both functions on the whole
Brillouin zone ($D_{0 n}  (p)$ and $\lambda_{n 0} (p)$) and the properties of
each Weyl nodes ($D_{j, 10} (\tilde{p}^j) $). This term is the result of the singularity nature of Weyl nodes. We will explain more about this after all the technical details are presented.
\end{itemize}
\end{remark}

\begin{remark}
   This result is closely associated with the result in
{\cite{de_juan_quantized_2017}}. In {\cite{de_juan_quantized_2017}}, the
author directly consider the limit system \eqref{limit system}, and the
current is given by $J_{k,2}$ in \eqref{J exp final}. Therefore, our
result can be viewed as a rigorous justification of the result in
{\cite{de_juan_quantized_2017}}.

However, there are some significant differences between this result and the
result in {\cite{de_juan_quantized_2017}}. First of all, in
{\cite{de_juan_quantized_2017}}, the author used some conclusions in
{\cite{sipe_second-order_2000}}. The setting is to start from $t = - \infty$.
However, here we consider the Cauchy initial data problem and start from $t =
0$. Second, in {\cite{de_juan_quantized_2017}}, the author only impose a small
parameter before the electric field while in our setting, the small parameter
stands for the semiclassical regime. Therefore, to completely justify the
result in {\cite{de_juan_quantized_2017}}, a possible way is to add a small
parameter before $E$ in the limit system \eqref{limit system}, consider the
asymptotic expansion of $J_{2, k}$ and pass to the limit $t \rightarrow +
\infty$.

\end{remark}

\section{Multiscale expansion}
The proof of the main result involves several key steps. In this section, we proceed with calculating the asymptotic expansion of $c_{m n}$. The rest of the error estimate will be completed in Section \ref{4}. 

Instead of $c_{mn}$, we will focus on $\breve{c}_{m n}$ defined by \eqref{alt u} and
\eqref{c def}, which satisfies \eqref{new c eq}. One advantage of focusing on
$\breve{c}_{m n}$ is that \eqref{new c eq} is a infinite-dimension ODE with
parameter $p$. We can get pointwise estimate like
\[ | c_{m n} (t, p) | \leqslant F (p), \]
for some nonnegative function $F (p)$.

As mentioned before, the degeneracy of energy gap makes the problem difficult.
By Duhamel's formula, we can rewrite \eqref{new c eq} in the form of integral
as
\begin{equation}
  \breve{c}_{mn} = c_{mn}^{i n} - \mathi \varepsilon \int^t_0
  {e^{\frac{\mathi}{\varepsilon}  \int^t_s \breve{\lambda}_{mn} (r, p) d r}} 
  E (s) \cdummy \sum_p (\breve{A}_{mp}  \breve{c}_{pn} - \breve{c}_{mp} 
  \breve{A}_{pn}) (s, p) d s. \label{int eq} 
\end{equation}
When $\lambda_{m n} = O (1)$, the integral of the right side is oscillatory
and we can use the stationary phase method. But when $\lambda_{m n} = O
(\varepsilon)$, the term $\frac{\lambda_{m n}}{\varepsilon}$ is $O (1)$
instead of $O (1 / \varepsilon)$. In this situation, we seek a more comprehensive and global analysis.
Besides, when $\lambda_{m n} = O (\varepsilon)$, the Berry connection $A_{m n}$
might be $O \left( \frac{1}{\varepsilon} \right)$ because of degeneracy (see \eqref{AD
relation}), which makes the analysis more difficult.

To solve this problem, we will first divide the Brillouin zone into three
parts, which will be introduced in \S\ref{3.1}. Different strategies are used for different regions, which will be elaborated in  \S\ref{3.3} and \S\ref{3.6}.

\subsection{Division of the Brillouin zone and our strategies}\label{3.1}
Now we introduce our overall strategy of the proof. We first establish  the asymptotic
expansion of $J$ in this section. The second step will be carried out in Section
\ref{4}, where we will also give more explanation about $J_{k, 1} (t)$ and
$J_{k, 2} (t)$.

As we mentioned before, we proposed to resolve the problem of singularity by treating the cases $\lambda_{m n} = O (1)$ and $\lambda_{m
n} = O (\varepsilon)$ separately. More specifically, we divide the Brillouin zone $\Gamma^{\ast}$
into three parts, $\Gamma_s, \Gamma_m$ and $\Gamma_r$, as the following
picture shows.\begin{figure}[thb]
  {\includegraphics[width=12cm,height=5cm]{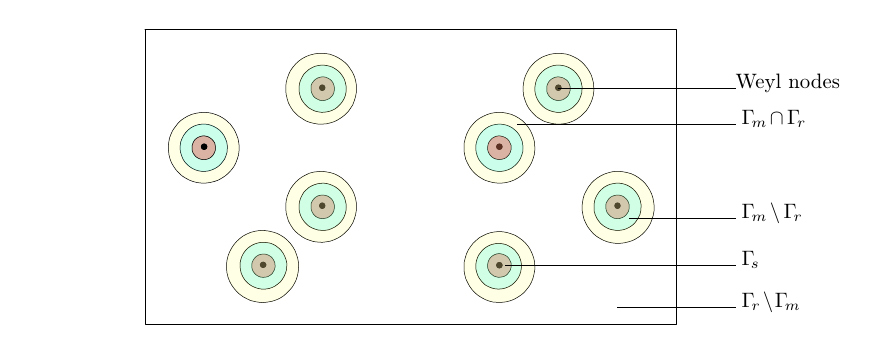}}
  
  \
  \caption{Division of Brillouin zone. The red region is $\Gamma_s$, the
  green and yellow regions are $\Gamma_m$ and the yellow and white regions are
  $\Gamma_r$.}
\end{figure}

{\tmem{1. Definition of $\Gamma_s$:}} First, we choose a $r_1$ independent of
$\varepsilon$ and let $\Gamma_s$ be

\begin{equation}
  \Gamma_s = \bigcup^N_{j = 1} B_{W_j} (p_j^0, \varepsilon r_1) . \label{Gs}
\end{equation}

Here, $B_{W_j} (p_j^0, \varepsilon r_1)$ is defined by \eqref{Bw def}. Notice
that $W_j$ is invertible so $B_{W_j} (p_j^0, r)$ is an ellipsoid. $r_1$ should
be large enough such that for any $0< t < T$,
\begin{equation}
  \forall p \nin \Gamma_s, | P_t (p) - p_j^0 | > \varepsilon d, \label{small
  dist}
\end{equation}
for some constant $d > 0$ independent of $\varepsilon$. Here, since
\[ \breve{\lambda}_{10} (t, p) = \lambda_{10}  (P_t (p)), \]
we can assume that
\[ \breve{\lambda}_{10} (t, p) > \varepsilon r_0,\quad \forall 0<t<T,\quad p \nin
   \Gamma_s, \]
for some constant $r_0 > 0$ independent of $\varepsilon$.

{\tmem{2. Definition of $\Gamma_m$:}} Then we choose a $r_2 < C$ independent
of $\varepsilon$ such that in $B_{W_j}  (p_i, r_2) \setminus \{p_i \}$ we have
\begin{equation}
  \label{delta condition} C_i / 2 < | \partial_r \lambda_{10} (p_j^0 + r
  \omega) | < 2 C_i,\quad \forall \omega \in \mathbb{S }^2_{W_j},\quad 0 < r < r_2 .
\end{equation}
Here, $\mathbb{S }^2_{W_j}$ is defined by $\mathbb{S }^2_{W_j} = \partial
B_{W_j}  (0, 1)$. Let the second part $\Gamma_m$ be
\begin{equation}
  \Gamma_m = \bigcup^N_{j = 1} (B_{W_j} (p_j^0, r_2) \backslash B_{W_j}
  (p_j^0, \varepsilon r_1)) . \label{Gm}
\end{equation}
Then we can set $q_i = \lambda_{10}  (p_i + r \omega)$ with the inverse
function $r = r_i (q_i, \omega)$ thanks to \eqref{delta condition}.

\begin{remark}
  We choose $\Gamma_m$ and $\Gamma_s$ in this way because for the following
  integral
  \[ \int_{B_{W_i} (p_i, r_2)} f (\lambda_{10} (p), p) d p , \]
  we can change the variable of integral and get
  \begin{eqnarray}
    \label{change
    vari} &\int_{B_{W_j}  (p_i, r_2)} f (\lambda_{10} (p), p) d p = 
    \int_{\mathbb{S }^2_{W_j}} \int_0^{r_2} f (\lambda_{10} (p_i + r \omega),
    p_i + r \omega) r^2 d r d \omega \\
     =& \int_{\mathbb{S }^2_{W_j}} \int_0^{\lambda_{10}  (p_i + r_2
    \omega)} f (q, p_i + r_i (q, \omega) \omega) r_i (q, \omega)^2
    \frac{\partial}{\partial q} r_i (q, \omega) d q d \omega .  \nonumber
  \end{eqnarray}
\end{remark}

{\tmem{3. Definition of $\Gamma_r$:}} Finally, let the last part $\Gamma_r$ be
complementary set of $\Gamma_m$ and $\Gamma_s$. In other words, let $\Gamma_r$
denote
\begin{equation}
  \Gamma_r = \Gamma^{\ast} \backslash \bigcup^N_{j = 1} B_{W_j} \left( p_j^0,
  \frac{r_2}{2} \right) . \label{Gr}
\end{equation}
Notice that $\Gamma_m \bigcap \Gamma_r \neq \varnothing$. The reason is that
we need to use the partition of unity of $\Gamma^{\ast}$. Let $\chi_1
\infixand \chi_2$ be two smooth nonnegative function defined on $\Gamma^\ast$, such that
\begin{eqnarray*}
  \chi_1 + \chi_2 & = & 1,\\
  \chi_1 & \equiv & 0 \quad \tmop{on} \quad \Gamma_r^c,\\
  \chi_2 & \equiv & 0 \quad \tmop{on} \quad \left( \Gamma_m \bigcup \Gamma_s
  \right)^c .
\end{eqnarray*}
Here $\Gamma_r^c$ means the complementary set of $\Gamma_r$.

\begin{remark}
  Here is a summary about the order of $\lambda_{m n}$ on different regions.
  For all $\lambda_{m n}$ except $\lambda_{01}$ and $\lambda_{10}$, recalling Assumption \ref{regular asump}, we know that
  \[ | \lambda_{m n} | > C'_0,\quad \forall m \neq n \infixand m + n > 1. \]
  For $\lambda_{10}$, we have
  \begin{align*}
    | \lambda_{1 0} | > & C'_0 \quad \tmop{on} \quad \Gamma_r,\\
    \varepsilon C'_2 < | \lambda_{1 0} | < & C'_1 \quad \tmop{on} \quad \Gamma_m,\\
    | \lambda_{1 0} | < & \varepsilon C'_2 \quad \tmop{on} \quad \Gamma_s,
  \end{align*}
  for some constant $C'_0, C'_1$ and $C'_2$.
\end{remark}

\begin{remark}
    The choice of $r_1$ and $r_2$ has no influence on the final result. The asymptotic expression of $J$ given by \eqref{J1 def} and \eqref{J2 def} is independent of the division of the
Brillouin zone. The final expression does not contains $r_1$ so it is
independent of $r_1$. On the other hand, if we take the derivative with
respect to $r_2$, all $r_2$ dependent terms will cancel. Therefore, the expression is also
independent of $r_2$.
\end{remark}

Now let us elaborate on our strategy. It mainly contains two ingredients, which we
will introduce in Section \ref{3.3} and Section \ref{3.6} separately. The first ingredient is
the following expansion, which we will apply to $\Gamma_r$ and $\Gamma_m$,
\begin{equation}
  \breve{c}_{m n} = c_{m n}^0 + \frac{\varepsilon}{\lambda_{10}} c_{m n}^1 +
  \left( \frac{\varepsilon}{\lambda_{10}} \right)^2 c_{m n}^2 + \left(
  \frac{\varepsilon}{\lambda_{10}} \right)^3 c_{m n}^3 + \left(
  \frac{\varepsilon}{\lambda_{10}} \right)^4 \varsigma_{m n}^4 . \label{c asym
  ori}
\end{equation}
Here we use $\frac{\varepsilon}{\lambda_{10}}$, which is $O (\varepsilon)$ on
$\Gamma_r$ and varies from $O (\varepsilon)$ to $O (1)$ on $\Gamma_m$, instead
of $\varepsilon$ as the parameter of asymptotic expansion. In this way, we can
get a uniform bound for the remainders. We will show that $c_{m n}^k (0
\leqslant k \leqslant 3)$ and $\varsigma_{m n}^4$ satisfy
\begin{eqnarray}
  c_{m n}^k & \leqslant & C, \nonumber\\
  \varsigma_{m n}^4 & \leqslant & C,  \label{Rmn4}
\end{eqnarray}
uniformly on $\Gamma_r \cup \Gamma_m$. Notice that
\begin{equation}
  \left| \frac{\lambda_{10} (p)}{\breve{\lambda}_{m n} (t_1, p)} \right|
  \leqslant C, \forall t_1 \in \mathbb{R}_+, m, n \in \mathbb{N}.
  \label{uniform bound}
\end{equation}
This means that if a term is $O \left( \frac{\varepsilon}{\breve{\lambda}_{m
n}} \right)$, it is also $O \left( \frac{\varepsilon}{\lambda_{10}} \right)$.
\ This part will be shown in \ref{3.3}.

The second ingredient is rescaling, which we will apply to $\Gamma_m \cup
\Gamma_s$. We need to use the rescaling
\begin{equation}
  \tilde{p}_j = \frac{(p - p_j^0)}{\varepsilon}, \label{rescaling p}
\end{equation}
and get a rescaled system. In the rescaled system, let
\begin{equation}
  \mu^{\varepsilon, j}_{10} (\tilde{p}_j) = \frac{\lambda_{10} (p_j^0 +
  \varepsilon \tilde{p}_j)}{\varepsilon} . \label{mu def}
\end{equation}
Pass to the limit $\varepsilon \rightarrow 0$ and we can get a limit system. In
the limit system, $\mu^{\varepsilon, j}_{10} (\tilde{p}_j)$ becomes $| W_j 
\tilde{p}_j |$ because of \eqref{lpm asym}. This part will be shown in
\ref{3.6}.

We need to combine the two methods together. One the one hand, because of the presence of degeneracy, it is not enough to only use the expansion \eqref{c asym ori}. On the other hand, the Effective Hamiltonian method is a linear approximation near Weyl nodes and thus only valid in a small region. As a result, the expansion works well in $\Gamma_r$ while the Effective Hamiltonian method is applied in $\Gamma_s$. In  $\Gamma_m$, both methods are involved.

\subsection{The asymptotic expansion of $c$}\label{3.3}

In this part we will introduce the result on the asymptotic expansion of $c_{m
n}$ \eqref{c asym ori}. Here is the result.

\begin{lemma}
  \label{c result}Suppose $c_{m n}$ is the solution of \eqref{c equation} and
  $\breve{c}_{m n}$ is defined by \eqref{alt u}. If Assumption
  \ref{regular asump} holds, then we have the followingexpansion on $\Gamma_r
  \cup \Gamma_m$,
  \[  \breve{c}_{m n} = c_{m n}^0 + \frac{\varepsilon}{\lambda_{10}} c_{m n}^1
     + \left( \frac{\varepsilon}{\lambda_{10}} \right)^2 c_{m n}^2 + \left(
     \frac{\varepsilon}{\lambda_{10}} \right)^3 c_{m n}^3 + \left(
     \frac{\varepsilon}{\lambda_{10}} \right)^4 \varsigma_{m n}^4, \]
  with $c_{m n}^k (0 \leqslant k \leqslant 3)$ given by \
  \begin{eqnarray}
    c_{m n}^0 & = & c_{m n}^{i n},  \label{c0}\\
    c_{m n}^1 & = & 0,  \label{c1}\\
    c_{m n}^2 & = & (\lambda_{10})^2 T_{m n} (t, p) E (0) \cdummy X_{m n}  (p)
    - (\lambda_{10})^2 E (t) \cdummy \breve{X}_{m n}  (t, p),  \label{c2}\\
    \label{c3} c_{m n}^3 & = & \mathi (\lambda_{10})^3 T_{m n} (t, p) \frac{f_{mn} E' (0)
    \cdummy A_{mn}  (p) \lambda_{n m} (p)}{\lambda_{n m} (p)^3} \\
    & - & \mathi
    (\lambda_{10})^3 \frac{f_{mn} E' (t) \cdummy \breve{A}_{mn}  (t, p)
    \breve{\lambda}_{n m} (t, p)}{\breve{\lambda}_{n m} (t, p)^3} 
    \nonumber\\
    & + & \mathi T_{m n} (t, p) \int^t_0  (\lambda_{10})^3 E (s) \cdummy
    (\breve{A}_{n n} - \breve{A}_{m m}) (s, p) E (0) \cdummy X_{m n}  (p) d s.
    \nonumber
  \end{eqnarray}
  Here, $X_{m n}$ is defined by \eqref{X def} and \eqref{f def} and $T_{m n}$
  is defined by \eqref{T def}. And the remainder $\varsigma_{m n}^4$ given by
  \eqref{r4mn tilde} is bounded by a constant independent of $\varepsilon$.
\end{lemma}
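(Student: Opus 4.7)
The plan is to iterate the Duhamel equation \eqref{int eq} and, at each stage, convert every explicit factor of $\varepsilon$ into a boundary term of size $\varepsilon/\breve{\lambda}_{mn}$ via non-stationary-phase integration by parts. By \eqref{uniform bound}, each such factor is dominated uniformly by $\varepsilon/\lambda_{10}$ on $\Gamma_r\cup\Gamma_m$, which is why this ratio is used as the expansion parameter. The key identity is $e^{\mathi\Phi_{mn}(s,t;p)/\varepsilon}=(\mathi\varepsilon/\breve{\lambda}_{mn}(s,p))\,\partial_s e^{\mathi\Phi_{mn}/\varepsilon}$ with $\Phi_{mn}(s,t;p)=\int_s^t\breve{\lambda}_{mn}(r,p)\,dr$, which turns one $\varepsilon$ in front of an oscillatory integral into boundary terms at $s=0,t$ plus a residual carrying an additional $\varepsilon/\breve{\lambda}_{mn}$.

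Seeding the Picard iteration with $\breve{c}_{mn}^{(0)}(s,p)=c_{mn}^{in}(p)$ immediately gives $c_{mn}^0=c_{mn}^{in}$. Because only $c_{00}^{in}\neq 0$, the coupling $\sum_k(\breve{A}_{mk}\breve{c}_{kn}^{(0)}-\breve{c}_{mk}^{(0)}\breve{A}_{kn})$ collapses to $\delta_{n0}\breve{A}_{m0}c_{00}^{in}-\delta_{m0}c_{00}^{in}\breve{A}_{0n}$; this vanishes at $(m,n)=(0,0)$ and otherwise drives the off-diagonal entries. One application of the IBP identity yields a $s=t$ boundary term $\varepsilon^2 E(t)\cdot\breve{A}_{m0}c_{00}^{in}/\breve{\lambda}_{m0}$ and a $s=0$ boundary term $\varepsilon^2 T_{mn}(t,p)E(0)\cdot A_{m0}c_{00}^{in}/\lambda_{m0}$ with the retained oscillatory phase $T_{mn}(t,p)=e^{\mathi\int_0^t\breve{\lambda}_{mn}/\varepsilon\,dr}$. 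Factoring out $(\varepsilon/\lambda_{10})^2\lambda_{10}^2$ and identifying $X_{mn}$, $\breve{X}_{mn}$ with the appropriate $A_{m0}/\lambda_{m0}$-type combinations recovers $c_{mn}^2$ in \eqref{c2}; the absence of any $\varepsilon^1/\lambda_{10}$ contribution gives $c_{mn}^1=0$.

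A second IBP on the remainder integral differentiates $E(s)\cdot\breve{A}_{mn}/\breve{\lambda}_{mn}$ in $s$ and produces the $E'(t)$ and $T_{mn}E'(0)$ boundary contributions to $c_{mn}^3$. The $\int_0^t E(s)\cdot(\breve{A}_{nn}-\breve{A}_{mm})\cdots ds$ term in \eqref{c3} arises from the diagonal entries of the band-coupling sum: the piece $-\mathi\varepsilon E(t)\cdot(\breve{A}_{mm}-\breve{A}_{nn})\breve{c}_{mn}$ can be absorbed into the left-hand side of \eqref{new c eq} as a phase correction, and expanding the resulting exponential to first order in $\varepsilon$ (equivalently, carrying out the second Picard turn through the diagonal channel and substituting the corrected $\breve{c}_{00}$ into the first-order formula) produces exactly that integral.

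The remainder $\varsigma_{mn}^4$ is the unexpanded sum of oscillatory integrals whose integrands involve up to three $s$-derivatives of $E\cdot\breve{A}_{mn}/\breve{\lambda}_{mn}$ together with the Picard tail summing over all bands. The main obstacle will be its uniform bound. By Assumption \ref{regular asump}(c), each such derivative costs at most one factor of $|p-p_j^0|^{-1}$, which on $\Gamma_r\cup\Gamma_m$ is at worst $1/(\varepsilon r_1)$; these losses are exactly absorbed by the explicit $\varepsilon$'s generated by the $(\varepsilon/\lambda_{10})^4$ residual prefactor. The infinite band sum is controlled by the uniform gap bound $|\lambda_{mn}|\geq C$ for $m+n\geq 2$ from Assumption \ref{regular asump}(a) together with the smoothness of $\Psi_n^0$ for $n\geq 2$, applied through a Grönwall argument on the coupled Picard system. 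These two ingredients together deliver $\|\varsigma_{mn}^4\|_\infty\leq C$ uniformly in $\varepsilon$, completing the expansion on $\Gamma_r\cup\Gamma_m$; the complementary region $\Gamma_s$, where $\lambda_{10}$ may be arbitrarily small and the expansion genuinely breaks down, is handled separately by the rescaling strategy of Section \ref{3.6}.
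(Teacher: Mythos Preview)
Your proposal is correct and follows essentially the same approach as the paper. The paper formalizes the Picard iterates as $d_{mn}^k$ (equations \eqref{d0}--\eqref{ckmn}), expands each via the packaged integration-by-parts Lemma~\ref{A2}, and then regroups into $c_{mn}^k$ via \eqref{ckmn cal}; the remainder $R_{mn}$ is bounded by exactly the Gr\"onwall argument you describe. Your more direct description of iterating IBP and reading off boundary terms, together with the observation that the diagonal Berry-connection term can equivalently be viewed as a phase correction, is the same mechanism without the intermediate $d_{mn}^k$ bookkeeping.
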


Here, we only introduce the outline of the proof and the details of the proof are
given in Appendix \ref{pf c result}. For the expansion
\begin{equation}
  \breve{c}_{m n} = c_{m n}^0 + \left( \frac{\varepsilon}{\lambda_{10}}
  \right) c_{m n}^1 + \left( \frac{\varepsilon}{\lambda_{10}} \right)^2 c_{m
  n}^2 + \left( \frac{\varepsilon}{\lambda_{10}} \right)^3 c_{m n}^3 + \left(
  \frac{\varepsilon}{\lambda_{10}} \right)^4 \varsigma_{m n}^4, \label{repeat
  c asym}
\end{equation}
we will try to calculate $c_{m n}^k (0 \leqslant k \leqslant 3)$. Recalling
the initial data of $\breve{c}_{m n}$ given by \eqref{cu id}, then the initial
data of $c^k_{mn}$ is given by

\begin{align}
  c^k_{mn}  (0, p) & = c_{m n}^{i n} \delta_{k 0} .  \label{cn initial data}
\end{align}

By Duhamel's formula, we can rewrite \eqref{new c eq} in the form of integral
as
\begin{eqnarray}
  \breve{c}_{mn} & = & c_{m n}^{i n} - \mathi \varepsilon \int^t_0
  {e^{\frac{\mathi}{\varepsilon}  \int^t_s \breve{\lambda}_{mn} (r, p) d r}} 
  E (s) \cdummy \sum_p (\breve{A}_{mp}  \breve{c}_{pn} - \breve{c}_{mp} 
  \breve{A}_{pn}) (s, p) d s, 
\end{eqnarray}
Using $T_{m n}$ defined by \eqref{T def}, we can also rewrite the equation
above as
\begin{equation}
  \breve{c}_{mn} = c_{m n}^{i n} - \mathi \left(
  \frac{\varepsilon}{\lambda_{10}} \right) \lambda_{10} T_{m n} (t, p)
  \int^t_0 T_{n m} (s, p) E (s) \cdummy \sum_p (\breve{A}_{mp}  \breve{c}_{pn}
  - \breve{c}_{mp}  \breve{A}_{pn}) (s, p) d s, \label{cmn int}
\end{equation}
Notice the factor $\varepsilon$ on the right side of \eqref{cmn int}, we can
immediately give the zeroth-order terms as
\begin{equation}
  c_{mn}^0 (t, p) = c^{in}_{mn} (p) . \label{zeroth}
\end{equation}
It is natural to think that higher-order terms are given by
\begin{equation}
  c^k_{mn} = - \mathi \lambda_{10} T_{m n} (t, p) \int^t_0 T_{n m} (s, p) E
  (s) \cdummy \sum_p (\breve{A}_{mp} c_{pn}^{k - 1} - c_{mp}^{k - 1} 
  \breve{A}_{pn}) (s, p),\quad k \geqslant 1. \label{origin ckmn}
\end{equation}
But generally this is not right. The integral of the right side might be
oscillatory and in this case, some terms of the right side is $O \left(
\frac{\varepsilon}{\lambda_{10}} \right)$. Thus we need to check whether each
term of the right side is $O (1)$, and collect $O (1)$ terms for
$c^k_{mn}$ while high order terms should be moved to equations of $c^j_{mn} (j > k)$.

The process consists of two steps. Inspired by \eqref{origin ckmn}, we can
rewrite $ \breve{c}_{m n}$ as
\begin{equation}
  \breve{c}_{m n} = d_{m n}^0 + \left( \frac{\varepsilon}{\lambda_{10}}
  \right) d_{m n}^1 + \left( \frac{\varepsilon}{\lambda_{10}} \right)^2 d_{m
  n}^2 + \left( \frac{\varepsilon}{\lambda_{10}} \right)^3 d_{m n}^3 + \left(
  \frac{\varepsilon}{\lambda_{10}} \right)^4 R_{m n} . \label{d exp}
\end{equation}
Here, $d_{m n}^k$ is given by
\begin{eqnarray}
  d_{m n}^0 & = & c_{mn}^0,  \label{d0}\\
  d^k_{mn} & = & - \mathi \lambda_{10} T_{m n} (t, p) \int^t_0 T_{n m} (s, p)
  E (s) \cdummy \sum_p (\breve{A}_{mp} d_{pn}^{k - 1} - d_{mp}^{k - 1} 
  \breve{A}_{pn}) (s, p) d s,\, k \geqslant 1.  \label{ckmn}
\end{eqnarray}
Notice that this is not an asymptotic expansion because $d_{m n}^1$ might be
$O \left( \frac{\varepsilon}{\lambda_{10}} \right)$ instead of $O (1)$.

In the first step, we will analyze and rearrange $d_{m n}^k$ with $k \leqslant
3$ and distribute them to $c_{m n}^k$ with $k \leqslant 3$. In other words,
for each $d_{m n}^k (k \leqslant 3)$, we have the following asymptotic
expansion,
\begin{equation}
  d_{m n}^k = d_{m n}^{k, 0} + \left( \frac{\varepsilon}{\lambda_{10}} \right)
  d_{m n}^{k, 1} + \cdots + \left( \frac{\varepsilon}{\lambda_{10}} \right)^{3
  - k} d_{m n}^{k, 3 - k} + \kappa_{m n}^k, \label{dkmn asym}
\end{equation}
with $\kappa_{m n}^k = O \left( \left( \frac{\varepsilon}{\lambda_{10}}
\right)^{4 - k} \right)$ Let
\begin{equation}
  \breve{c}_{m n}^k = d_{m n}^{0, k} + d_{m n}^{1, k - 1} + \cdots + d_{m
  n}^{k, 0} . \label{ckmn cal}
\end{equation}
Then we will get
\begin{eqnarray*}
  &  & d_{m n}^0 + \left( \frac{\varepsilon}{\lambda_{10}} \right) d_{m n}^1
  + \left( \frac{\varepsilon}{\lambda_{10}} \right)^2 d_{m n}^2 + \left(
  \frac{\varepsilon}{\lambda_{10}} \right)^3 d_{m n}^3\\
  & = & c_{m n}^0 + \left( \frac{\varepsilon}{\lambda_{10}} \right) c_{m n}^1
  + \left( \frac{\varepsilon}{\lambda_{10}} \right)^2 c_{m n}^2 + \left(
  \frac{\varepsilon}{\lambda_{10}} \right)^3 c_{m n}^3 + O \left( \left(
  \frac{\varepsilon}{\lambda_{10}} \right)^4 \right) .
\end{eqnarray*}
In this step, each $c_{m n}^k$ will be $O (1)$ terms and not contain high
order terms. Finally, it is obvious that the remainder $\varsigma_{m n}^4$ is
given by
\begin{equation}
  \varsigma_{m n}^4 = R_{m n} + \sum_{k = 0}^3 \left(
  \frac{\varepsilon}{\lambda_{10}} \right)^{k - 4} \kappa_{m n}^k .
  \text{\label{r4mn tilde}}
\end{equation}
In the last step, we will prove that the remainder $R_{m n}$ in \eqref{d exp}
is bounded. In this way, it cannot contribute to $c_{m n}^k$ with $k \leqslant
3$. Then we get the asymptotic expansion of $\breve{c}_{m n}$.

\begin{remark}
  $c_{m n}^k$ might not be independent of $\varepsilon$. There are two
  reasons. The first reason is that $c_{m n}^k$ might contains $T_{m n} (t, p)
  {= e^{\frac{\mathi}{\varepsilon}  \int^t_0 \breve{\lambda}_{mn} (r, p) d r}}
  $, which is $O (1)$ and does not have strong limit as $\varepsilon
  \rightarrow 0$. The other reason is that $\breve{f} (t, p)$ defined by
  \eqref{alt u} depends on $\varepsilon$ and the expression of $c_{m n}^k$
  contains terms like $\breve{\lambda}_{m n}$. But both reasons do not affect the validity of the asymptotic expansion.
\end{remark}

\subsection{The rescaled system and its limit system}\label{3.6}

In this part, we will introduce the rescaled system and its limit system.

\subsubsection{The rescaled system}

For each Weyl node $p_j^0$, we first apply the rescaling \eqref{rescaling p}.
Let $b^j_{m n} (t, \tilde{p}_j)$ denote $c_{m n} (t, p)$ after rescaling. In
other words,
\begin{equation}
  b^j_{m n} (t, \tilde{p}_j) = c_{m n} (t, p_j^0 + \varepsilon \tilde{p}_j) .
  \label{b def}
\end{equation}
Then recalling $\mu_{m n}^j$ defined by \eqref{mu def}, $b^j_{m n}$ satisfy
the following equation from \eqref{c equation},
\begin{equation}
  \label{b equation} \frac{\partial}{\partial t} b^j_{m n} + \mathi
  \mu^{\varepsilon, j}_{mn} (\tilde{p}_j) b^j_{m n} = - E (t) \cdot
  \nabla_{\tilde{p}_j} b^j_{m n} - \mathi E (t) \cdot \sum_p (L_{mp}^j
  (\tilde{p}_j) b^j_{p n} - b^j_{m p} L_{pn}^j (\tilde{p}_j)) .
\end{equation}
Here, we define
\begin{eqnarray}
  L_{mp}^j (\tilde{p}_j) & = & \varepsilon A_{mp} (p_j^0 + \varepsilon
  \tilde{p}_j), \nonumber\\
  K^j_{m p} (\tilde{p}_j) & = & D_{mp} (p_j^0 + \varepsilon \tilde{p}_j) . 
  \label{E def}
\end{eqnarray}
And we have used the fact that $\nabla_{\tilde{p}_j} = \varepsilon \nabla_p$. After
this rescaling, $B_{W_j} (p_j^0, \varepsilon r_1)$ becomes $B_{W_j} (0, r_1)$.
Similarly, $B_{W_j} (p_j^0, r_2)$ becomes $B_{W_j} \left( 0,
\frac{r_2}{\varepsilon} \right)$. Then the characteristic line becomes
\begin{equation}
  \left\{\begin{array}{l}
    \frac{d}{d t} p (t) = E (t),\\
    p (0) = p_0,
  \end{array}\right. \label{new x_orbit}
\end{equation}
Now let $\tilde{P}_t (p_0)$ denote its solution and for each function $f (t,
p)$, define $\check{f} (t, p)$ as
\begin{equation}
  \check{f} (t, p) = f (t, \tilde{P}_t (p)) . \label{check def}
\end{equation}
It is easy to show that
\[ \check{b}^j_{m n} (t, \tilde{p}_j) = \breve{c}_{m n}  (t, p_j^0 +
   \varepsilon \tilde{p}_j), \]
and that $\check{b}^j_{m n}$ satisfies
\begin{equation}
  \label{new s equation} \frac{\partial}{\partial t}  \check{b}^j_{m n} +
  \mathi \check{\mu}^j_{mn} (\tilde{p}_j)  \check{b}^j_{m n} = - \mathi E (t)
  \cdot \sum_p (\check{L}^j_{mp} (\tilde{p}_j) \check{b}^j_{p n} -
  \check{b}^j_{m p}  \check{L}^j_{pn} (\tilde{p}_j)) .
\end{equation}
Since
\[ \frac{\varepsilon}{\lambda_{10} (p_j^0 + \varepsilon \tilde{p}_j)} =
   \frac{1}{\mu^{\varepsilon, j}_{10} (\tilde{p}_j)}, \]
we know that \eqref{c asym ori} is equivalent to the following expansion,
\begin{equation}
  \check{b}^j_{m n} = b_{m n}^{j, 0} + (\mu^{\varepsilon, j}_{10})^{- 1} b_{m
  n}^{j, 1} + (\mu^{\varepsilon, j}_{10})^{- 2} {b_{m n}^{j, 2}}   +
  (\mu^{\varepsilon, j}_{10})^{- 3} b_{m n}^{j, 3} + (\mu^{\varepsilon,
  j}_{10})^{- 4} \beta_{m n}^{j, 4}, \label{c asym res} 
\end{equation}
with
\begin{eqnarray*}
  b_{m n}^{j, k} (\tilde{p}_j) & = & c_{m n}^k (p_j^0 + \varepsilon
  \tilde{p}_j), \quad 0 \leqslant k \leqslant 3,\\
  \beta_{m n}^{j, 4} (\tilde{p}_j) & = & \varsigma_{m n}^4 (p_j^0 +
  \varepsilon \tilde{p}_j) .
\end{eqnarray*}
For the remainder $\varsigma_{m n}^4$ in \eqref{repeat c asym}, we have
\[ \varsigma_{m n}^4 = R_{m n} + \kappa_{m n}^0 + \kappa_{m n}^1 + \kappa_{m
   n}^2 + \kappa_{m n}^3 . \]
Then for $\beta_{m n}^{j, 4}$, we can also have the corresponding expression
\[ \beta_{m n}^{j, 4} = r_{m n}^{j, 4} + \alpha_{m n}^{j, 0} + \alpha_{m
   n}^{j, 1} + \alpha_{m n}^{j, 2} + \alpha_{m n}^{j, 3} . \]
Here,
\begin{eqnarray}
  r_{m n}^{j, 4} (\tilde{p}_j) & : = & R_{m n} (p_j^0 + \varepsilon
  \tilde{p}_j),  \label{small r def}\\
  \alpha_{m n}^{j, k} (\tilde{p}_j) & : = & \kappa_{m n}^k (p_j^0 +
  \varepsilon \tilde{p}_j) .  \label{alpha def}
\end{eqnarray}
Remember that $R_{m n}$ satisfies
\[ \begin{array}{lll}
     R_{m n} & = & - \mathi \lambda_{10} \int^t_0
     {e^{\frac{\mathi}{\varepsilon}  \int^t_s \breve{\lambda}_{mn} (r, p) d
     r}}  E (s) \cdummy \sum_p (\breve{A}_{mp} d_{pn}^3 - d_{mp}^3 
     \breve{A}_{pn}) (s, p) d s\\
     & - & \mathi \varepsilon \int^t_0 {e^{\frac{\mathi}{\varepsilon} 
     \int^t_s \breve{\lambda}_{mn} (r, p) d r}}  E (s) \cdummy \sum_p
     (\breve{A}_{mp} R_{p n} - R_{m p}  \breve{A}_{pn}) (s, p) d s.
   \end{array} \]
Then $r_{m n}^{j, 4}$ satisfies
\begin{equation}
  \begin{array}{lll}
    r_{m n}^{j, 4} & = & - \mathi \mu_{10} \int^t_0 {e^{\mathi \int^t_s
    \check{\mu}^j_{mn} (r, \tilde{p}_j) d r}}  E (s) \cdummy \sum_p
    (\check{L}^j_{mp} {b_{p n}^{j, 3}}   {- b_{m p}^{j, 3}}  
    \check{L}^j_{pn}) (s, \tilde{p}_j) d s\\
    & - & \mathi \int^t_0 {e^{\mathi \int^t_s \check{\mu}^j_{mn} (r,
    \tilde{p}_j) d r}}  E (s) \cdummy \sum_p (\check{L}^j_{mp} r_{p n}^{j, 4}
    - r_{m p}^{j, 4}  \check{L}^j_{pn}) (s, \tilde{p}_j) d s.
  \end{array} \label{small r equation}
\end{equation}

\subsubsection{The limit system}

To capture the leading order behavior of the rescaled system \eqref{b equation}, we can pass to the limit
$\varepsilon \rightarrow 0$ uniformly on this region, then thanks to
Propostion \ref{eff result},
\begin{eqnarray}
  \mu^{\varepsilon, j}_{m n} (\tilde{p}_j) & = & \frac{\lambda_{m n} (p_j^0 +
  \varepsilon \tilde{p}_j)}{\varepsilon} \rightarrow \lambda_{j, m n},\quad m, n
  \in \{ 0, 1 \},  \label{lim 1}\\
  L^j_{m n} (\tilde{p}_j) & = & \varepsilon A_{m n} (p_j^0 + \varepsilon
  \tilde{p}_j) \rightarrow A_{j, m n},\quad m, n \in \{ 0, 1 \}, \\
  L^j_{m n} (\tilde{p}_j) & = & \varepsilon A_{mp} (p_j^0 + \varepsilon
  \tilde{p}_j) \rightarrow 0,\quad  \forall m > 1 \infixor n > 1, \label{lim 4}\\
  K^j_{m n} (\tilde{p}_j) & = & D_{m n} (p_j^0 + \varepsilon \tilde{p}_j)
  \rightarrow D_{j, m n} (\tilde{p}_j).  \label{lim 5}
\end{eqnarray}
\begin{remark}
  In \eqref{c equation}, we should notice that there are two plausibly
  singular terms $\frac{\lambda_{m n}}{\varepsilon} \infixand \mathi
  \varepsilon E (t) \cdot \sum_p (A_{mp} c_{pn} - c_{mp} A_{pn})$ when
  $\varepsilon \rightarrow 0$. According to the previous assumptions, the limit of $\frac{\lambda_{m
  n}}{\varepsilon}(m,n\in\{0,1\})$ exists. For the second term, recalling \eqref{AD
  relation}, the plausibly singularity comes from
  $\frac{\varepsilon}{\lambda_{m n}}$ (when $m = 0, n = 1$ the denominator
  might be zero), which is just the reciprocal of $\frac{\lambda_{m
  n}}{\varepsilon}$. This coincidence guarantees that we will not get
  singularity when passing to the limit. 
\end{remark}

Here, we use the approximation from the effective Hamiltonian as we mentioned
in Assumption \ref{regular asump}. Notice that \eqref{lim 4} implies that if
we take the limit $\varepsilon \rightarrow 0$, then all $\check{b}_{m n}^j
\rightarrow 0 \left( m > 1 \infixor n > 1 \right)$. Only $\check{b}_{m n}^j$
with $m, n \in \{ 0, 1 \}$ are left. Let $s_{m n}$ denote the limit of $b_{m
n}$ with $m, n \in \{ 0, 1 \}$. Then we can write the limit system as
\begin{equation}
  \frac{\partial}{\partial t} s^j_{m n} + \mathi \lambda_{j, m n} s^j_{m n} =
  - E (t) \cdot \nabla_{\tilde{p}_j} s^j_{m n} - \mathi E (t) \cdot \sum_{p =
  0, 1} (A_{j, m p} s^j_{p n} - s^j_{m p} A_{j, p n} ), m, n \in \{ 0, 1 \},
  \label{limit system}
\end{equation}
or specificly,
\begin{eqnarray*}
  \frac{\partial}{\partial t} s^j_{00} & = & - E (t) \cdot
  \nabla_{\tilde{p}_j} s^j_{00} - \mathi E (t) \cdot (\frac{D_{j, 01}
  (\tilde{p}_j)}{\mathi | W_j  \tilde{p}_j |} s^j_{1 0} - s^j_{01} 
  \frac{D_{j, 10} (\tilde{p}_j)}{\mathi | W_j  \tilde{p}_j |}) \\
  \frac{\partial}{\partial t} s^j_{11} & = & - E (t) \cdot
  \nabla_{\tilde{p}_j} s^j_{11} - \mathi E (t) \cdot (\frac{D_{j, 10}
  (\tilde{p}_j)}{\mathi | W_j  \tilde{p}_j |} s^j_{01} - s^j_{10}  \frac{D_{j,
  01} (\tilde{p}_j)}{\mathi | W_j  \tilde{p}_j |}) \\
  \frac{\partial}{\partial t} s^j_{10} + \mathi | W_j  \tilde{p}_j | s^j_{10}
  & = & - E (t) \cdot \nabla_{\tilde{p}_j} s^j_{10} - \mathi E (t) \cdot
  (\frac{D_{j, 10} (\tilde{p}_j)}{\mathi | W_j  \tilde{p}_j |} (s^j_{00} -
  s^j_{11}) + \Lambda_{j, 10} (\tilde{p}_j) s^j_{10}) . 
\end{eqnarray*}
$s^j_{01}$ is given by $(s^j_{10})^{\ast}$. The initial data is given by
\[ s^j_{m n} (0, \tilde{p}_j) = c_{m n}^{i n} (p_j^0) . \]
\begin{remark}
  Here, we discard high-order terms of $o(\varepsilon^3)$. In fact, in this note we only plan to
  give the asymptotic expansion to the order $O (\varepsilon^3)$. We only pass
  to this limit directly on $\Gamma_s$, the area of which is $O
  (\varepsilon^3)$. Thus the contribution of high order terms are at least $o
  (\varepsilon^3)$.
\end{remark}

Since $B_{W_j} (0, r_1)$ is a bounded area, we can uniformly pass to the limit
$\varepsilon \rightarrow 0$ on $B_{W_j} (0, r_1)$ and the error will be $o
(1)$. In fact, we can pass to this limit uniformly in any bounded area. But we
cannot pass to the limit on $B_{W_j} \left( 0, \frac{r_2}{\varepsilon} \right)
.$

It is easy to see that $\check{s}^j_{m n}$ satisfies
\begin{equation}
  \frac{\partial}{\partial t} \check{s}^j_{m n} + \mathi \check{\lambda}_{j, m
  n} \check{s}^j_{m n} = - \mathi E (t) \cdot \sum_{p = 0, 1} (\check{A}_{j, m
  p} \check{s}^j_{p n} - \check{s}^j_{m p} \check{A}_{j, p n} ), m, n \in \{
  0, 1 \} . \label{limit system check}
\end{equation}
Then for $\check{s}^j_{m n}$, because of \eqref{c asym res}, we can assume
that $\check{s}^j_{m n}$ has the following expansion,
\begin{equation}
  \check{s}^j_{m n} = s_{m n}^{j, 0} + | W_j  \tilde{p}_j |^{- 1} s_{m n}^{j,
  1} + | W_j  \tilde{p}_j |^{- 2} s_{m n}^{j, 2} + | W_j  \tilde{p}_j |^{- 3}
  s_{m n}^{j, 3} + | W_j  \tilde{p}_j |^{- 4} \sigma_{m n}^{j, 4} . \label{s
  asym}
\end{equation}
Since $b_{m n}^{j, k}$ in \eqref{c asym res} has explicit expressions, we can
pass to the limit $\varepsilon \rightarrow 0$ with the help of \eqref{lim 1}
to \eqref{lim 4}. Notice that the expression of $b_{m n}^{j, k}$ comes from
Lemma \ref{c result}. Thus when we pass to the limit, we just need to replace
the terms in Lemma \ref{c result} by their limits. Then we can get the
expression of $s_{m n}^{j, k}$. They are
\begin{eqnarray*}
  s_{m n}^{j, 0} & = & c_{m n}^0 (p_j^0),  \label{s0}\\
  s_{m n}^{j, 1} & = & s_{0 0}^{j, 2} = s_{11}^{j, 2} = s_{0 0}^{j, 3} = s_{0
  0}^{j, 3} = 0, \\
  s_{10}^{j, 2} (t, \tilde{p}_j) & = & \mathi\, e^ { \mathi \int_0^t |
  W_j \tilde{P}_t (\tilde{p}_j) | d s } E (0) \cdummy D_{j, 10} 
  (\tilde{p}_j) - \mathi E (t) \cdummy \check{D}_{j, 10} (\tilde{p}_j), \\
  s_{10}^{j, 3} (t, p) & = & e^ { \mathi \int_0^t | W_j \tilde{P}_t
  (\tilde{p}_j) | d s } D_{j, 10}  (\tilde{p}_j) \cdummy \left( | W_j 
  \tilde{p}_j | E (0) \int^t_0 E (s) \cdummy \check{\Lambda}_{j, 10} (s,
  \tilde{p}_j) d s - E' (0) \right) \nonumber\\
  &  & + \frac{E' (t) \cdummy \check{D}_{j, 10}  (t, \tilde{p}_j) | W_j 
  \tilde{p}_j |^3}{| W_j \tilde{P}_t (\tilde{p}_j) |^3} .  \label{s4}
\end{eqnarray*}
Similarly, for $\alpha_{m n}^k (1 \leqslant k \leqslant 3)$ defined by
\eqref{alpha def} we have explicit expressions and we can suppose that
$\alpha_{m n}^k \rightarrow \eta_{m n}^k (0 \leqslant k \leqslant 3)$.
Therefore, we can suppose that
\[ \sigma_{m n}^{j, 4} = \rho_{m n}^{j, 4} + \eta_{m n}^0 + \eta_{m n}^1 +
   \eta_{m n}^2 + \eta_{m n}^3 . \]
For \eqref{small r equation}, we can pass to the limit $\varepsilon
\rightarrow 0$ and let
\begin{equation}
  \lim_{\varepsilon \rightarrow 0} r_{m n}^{j, 4} = \rho_{m n}^{j, 4} .
  \label{remainder limit part}
\end{equation}
Here, $\rho_{m n}^{j, 4}$ is given by
\begin{eqnarray*}
  \rho_{0 0}^{j, 4} & = & - \mathi | W_j  \tilde{p}_j | \int^t_0 E (s) \cdummy
  \left( \frac{\eta_{10}^3 \check{D}_{j, 01} (s, \tilde{p}_j)}{\mathi | W_j
  \tilde{P}_s (\tilde{p}_j) |} - \frac{\eta_{01}^3 \check{D}_{j, 10} (s,
  \tilde{p}_j)}{\mathi | W_j \tilde{P}_s (\tilde{p}_j) |} \right) d s
  \nonumber\\
  & - & \mathi \int^t_0 E (s) \cdummy \left( \frac{\rho_{10}^{j, 4}
  \check{D}_{j, 01} (s, \tilde{p}_j)}{\mathi | W_j \tilde{P}_s (\tilde{p}_j)
  |} - \frac{\rho_{01}^{j, 4} \check{D}_{j, 10} (s, \tilde{p}_j)}{\mathi | W_j
  \tilde{P}_s (\tilde{p}_j) |} \right) d s. \nonumber\\
  \rho_{01}^{j, 4} & = & - \mathi | W_j  \tilde{p}_j | \int^t_0 {e^{\mathi
  \int^t_s | W_j \tilde{P}_s (\tilde{p}_j) | d r}}  E (s) \cdummy \left(
  \frac{(\eta_{00}^3 - \eta_{11}^3) \check{D}_{j, 10} (s, \tilde{p}_j)}{\mathi
  | W_j \tilde{P}_s (\tilde{p}_j) |} - \eta_{01}^3  \check{\Lambda}_{j, 10}
  (s, \tilde{p}_j) \right) d s \nonumber\\
  & - & \mathi \int^t_0 {e^{\mathi \int^t_s | W_j \tilde{P}_s (\tilde{p}_j) |
  d r}}  E (s) \cdummy \left( \frac{(\rho_{00}^{j, 4} - \rho_{11}^{j, 4})
  \check{D}_{j, 10} (s, \tilde{p}_j)}{\mathi | W_j \tilde{P}_s (\tilde{p}_j)
  |} - \rho_{01}^{j, 4}  \check{\Lambda}_{j, 10} (s, \tilde{p}_j) \right) d s,
  \nonumber\\
  \rho_{m n}^{j, 4} & = & 0, \quad m > 1 \infixor n > 1.  \label{vanish}
\end{eqnarray*}
We do not list $\rho_{11}^{j, 4}$ and $\rho_{10}^{j, 4}$ here because it is
obvious that
\begin{equation}
  \rho_{0 0}^{j, 4} + \rho_{11}^{j, 4} = 0. \label{oppo}
\end{equation}
and that
\begin{equation}
  \rho_{10}^{j, 4} = (\rho_{01}^{j, 4} )^{\ast} .
\end{equation}
It is clear that the validity of the limit system holds uniformly for $0 < C'_1 < | W_j 
\tilde{p}_j | < C'_2$. It follows that
\begin{equation}
  \lim_{\varepsilon \rightarrow 0} \beta_{m n}^{j, 4} = \sigma_{m n}^{j, 4},
  \label{remainder limit}
\end{equation}
holds uniformly for $0 < C'_1 < | W_j  \tilde{p}_j | < C'_2$. Finally, for the
limit system, the current $J$ for the $k$-th Weyl node is given by
\begin{equation}
  J_k = \sum_{m, n \in \{ 0, 1 \}} \int_{\mathbb{R}^3} s_{m n}^k D_{k, n m} d
  p. \label{limit J}
\end{equation}

\subsection{Summary}

In summary, we apply a rescaling for the system in a neighborhood of each Weyl
node and pass to the limit $\varepsilon \rightarrow 0$ and get a limit system.
We can pass to the limit $\varepsilon \rightarrow 0$ uniformly in any area \
$0 < C'_1 < | W_j  \tilde{p}_j | < C'_2$. Here are the asymptotic expansions of
the original, the rescaled and the limit system.

For the original system, the expansion is
\begin{eqnarray*}
  \breve{c}_{m n} & = & c_{m n}^0 + \left( \frac{\varepsilon}{\lambda_{10}}
  \right) c_{m n}^1 + \left( \frac{\varepsilon}{\lambda_{10}} \right)^2 c_{m
  n}^2 + \left( \frac{\varepsilon}{\lambda_{10}} \right)^3 c_{m n}^3 + \left(
  \frac{\varepsilon}{\lambda_{10}} \right)^4 \varsigma_{m n}^4,\\
  \varsigma_{m n}^4 & = & R_{m n} + \kappa_{m n}^0 + \kappa_{m n}^1 +
  \kappa_{m n}^2 + \kappa_{m n}^3 .
\end{eqnarray*}
For the rescaled system, the expansion is
\begin{eqnarray*}
  \check{b}^j_{m n} & = & b_{m n}^{j, 0} + (\mu^{\varepsilon, j}_{10})^{- 1}
  b_{m n}^{j, 1} + (\mu^{\varepsilon, j}_{10})^{- 2} {b_{m n}^{j, 2}}   +
  (\mu^{\varepsilon, j}_{10})^{- 3} b_{m n}^{j, 3} + (\mu^{\varepsilon,
  j}_{10})^{- 4} \beta_{m n}^{j, 4},\\
  \beta_{m n}^{j, 4} & = & r_{m n}^{j, 4} + \alpha_{m n}^{j, 0} + \alpha_{m
  n}^{j, 1} + \alpha_{m n}^{j, 2} + \alpha_{m n}^{j, 3} .
\end{eqnarray*}
For the limit system, the expansion is
\begin{eqnarray*}
  \check{s}^j_{m n} & = & s_{m n}^{j, 0} + | W_j  \tilde{p}_j |^{- 1} s_{m
  n}^{j, 1} + | W_j  \tilde{p}_j |^{- 2} s_{m n}^{j, 2} + | W_j  \tilde{p}_j
  |^{- 3} s_{m n}^{j, 3} + | W_j  \tilde{p}_j |^{- 4} \sigma_{m n}^{j, 4},\\
  \sigma_{m n}^{j, 4} & = & \rho_{m n}^{j, 4} + \eta_{m n}^0 + \eta_{m n}^1 +
  \eta_{m n}^2 + \eta_{m n}^3 .
\end{eqnarray*}
So far, we have enough knowledge about the asymptotic behavior of $c_{m n}$ in
different regions. In the next section, we can plug the asymptotic expansion
of $c_{m n}$ into $J$ and finish the proof of Theorem \ref{J asym exp}, which
is the second part of our proof.

\section{Asymptotic Expansion of the Current}\label{4}

In this part, we will use the asymptotic expansion of the Bloch density matrix $c_{m n}$ to calculate $J$. In
other words, for $J$ given by \eqref{J exp}, our goal is to find $J_k^i (i =
0, 1, 2, 3)$ independent of $\varepsilon$ such that
\begin{equation}
  J_k (t) = J_k^0 (t) + \varepsilon J_k^1 (t) + (\varepsilon)^2 J_k^2 (t) +
  (\varepsilon)^3 J_k^3 (t) + o (\varepsilon^3) .
\end{equation}
The strategy is to substitute \eqref{c asym ori} into \eqref{new J exp} and
calculate each term. There are four steps. The current can be divided into
three terms,
\[ J_k (t) = \sum_{m, n} \int_{\Gamma_r} \breve{c}_{nm} \breve{D}^k_{mn}
   \chi_1 dp + \sum_{m, n} \int_{\Gamma_m} \breve{c}_{nm} \breve{D}^k_{mn}
   \chi_2 dp + \sum_{m, n} \int_{\Gamma_s} \breve{c}_{nm} \breve{D}^k_{mn} dp.
\]
In the first three steps, we will calculate the asymptotic expansion of the
three parts with different strategies. In the last step, we will glue them
together.

\begin{itemize}
    \item {\tmem{Calculation of integral on $\Gamma_s$: }}For the integral on
$\Gamma_s$, we can directly pass to the limit because in $\Gamma_s$, $|
\mu^{\varepsilon, j}_{10} | < r_1$. However, we cannot pass to the limit for those on $\Gamma_m$ because $\mu^{\varepsilon, j}_{10}$
varies from $O (1)$ to $O (1 / \varepsilon)$.
    \item {\tmem{Calculation of integral on $\Gamma_r$: }}For $\Gamma_r$, we can
directly use the asymptotic expansion and discard high-order terms because
$\lambda_{10}$ is bounded from below. For $\Gamma_m$, again, we cannot discard
high-order terms because $\frac{\varepsilon}{\lambda_{10}}$ might be $O (1)$
instead of $O (\varepsilon)$.
    \item {\tmem{Calculation of integral on $\Gamma_m$: }}For $\Gamma_m$, we need a
more delicate treatment by combining the two strategies together. First,
we plug the asymptotic expansion of $\breve{c}_{m n}$ in the integral. Second, we analyze the integral by taking the rescaling
\eqref{rescaling p} for each Weyl node $p_j^0$ and passing to the limit $\varepsilon \to 0$. 

The original region of
integral is $B_{W_j} (p_i, r_2) \backslash B_{W_j} (p_i, \varepsilon r_1)$, and
$| p - p_j^0 |$ varies from $O (\varepsilon)$ to $O (1)$. After the change of
variable, the region of integral for $\check{p}_i$ is $B_{W_j} \left( 0,
\frac{r_2}{\varepsilon} \right) \backslash B_{W_j} (0, r_1)$ and $|
\tilde{p}_j |$ varies from $O (1)$ to $O (1 / \varepsilon)$. 
We can apply the dominant convergence
theorem and other techniques to analyze the reformulated current.
\end{itemize}

\begin{remark}
  In practice, we do not have to strictly follow the three steps above. Instead, we
  divide \eqref{new J exp} into two parts,
  \[ J_k = J_{k, r m} + J_{k, s}, \]
  with
  \begin{eqnarray*}
    J_{k, r m} & = & \sum_{m, n} \int_{\Gamma_r \cup \Gamma_m} \breve{D}^k_{n
    m}  \breve{c}_{m n} dp,\\
    J_{k, s} & = & \sum_{m, n} \int_{\Gamma s} \breve{D}^k_{n m}  \breve{c}_{m
    n} dp.
  \end{eqnarray*}
  Recall the limit system \eqref{limit system}. For $J_{k, s}$, we can
  directly pass to the limit $\varepsilon \rightarrow 0$ and get
  \[ J_{k, s} = \varepsilon^3 \sum_j \sum_{m, n \in \{ 0, 1 \}} \int_{B_{W_j}
     (0, r_1)} \check{s}_{m n}^j \check{D}^k_{j, m n} d p + o (\varepsilon^3)
     . \]
  For $J_{k, r m}$, plugging \eqref{c asym ori} into the expression of $J_{k,
  r m}$ yields
  \[ J_{k, r m} = \int_{\Gamma r \cup \Gamma_m} \breve{D}^k_{n m}  \left( c_{m
     n}^0 + \frac{\varepsilon}{\lambda_{10}} c_{m n}^1 + \left(
     \frac{\varepsilon}{\lambda_{10}} \right)^2 c_{m n}^2 + \left(
     \frac{\varepsilon}{\lambda_{10}} \right)^3 c_{m n}^3 + \left(
     \frac{\varepsilon}{\lambda_{10}} \right)^4 \varsigma_{m n}^4 \right) dp.
  \]
  Then we analyze each term seperately.
\end{remark}

\begin{itemize}
    \item {\tmem{Glue the asymptotic expansions on different regions together:
}}In particular, we will show that the result does not depend on the choice of $r_1$
and $r_2$. Here is an heuristic explanation of our strategy.

Plugging \eqref{s asym} into the expression of $J_{k, s}$, we can formally
get
\begin{eqnarray*}
  J_{k, s} & = & \varepsilon^3 \sum_j \sum_{m, n \in \{ 0, 1 \}} \int_{B_{W_j}
  (0, r_1)} \check{D}^k_{j, m n} (\nobracket s_{m n}^{j, 0} + | W_j 
  \tilde{p}_j |^{- 1} s_{m n}^{j, 1} + | W_j  \tilde{p}_j |^{- 2} s_{m n}^{j,
  2}\\
  & + & | W_j  \tilde{p}_j |^{- 3} s_{m n}^{j, 3} + | W_j  \tilde{p}_j |^{-
  4} \sigma_{m n}^{j, 4} \nobracket) d \tilde{p}_j .
\end{eqnarray*}
After rescaling and passing to the limit $\varepsilon \rightarrow 0$,
$\Gamma_r \cup \Gamma_m$ corresponds to the rescaled region $\mathbb{R}^3 \backslash
B_{W_j} (0, r_1)$. In this region, we can divide those terms above into three
cases.

{\tmem{Case A}} Some terms decaying slowly at infinity are not integrable on
$\mathbb{R}^3$ but are integrable on $B_{W_j} (0, r_1)$. Let us take
$s_{m n}^{j, 0} = 1$ as an example. Since the integral of this term on $B_{W_j}
(0, r_1)$ is $O (\varepsilon^3)$, we can replace $s_{m n}^{j, 0}$ by $c_{m
n}^j$ with an $o (\varepsilon^3)$ error and merge the integral of
$c_{m n}^j$ on $\Gamma_s$ with that on $\Gamma_r \cup \Gamma_m$ together.

{\tmem{Case B}} Some terms are integrable on $\mathbb{R}^3 \backslash
B_{W_j} (0, r_1)$. On $\Gamma_r$, we directly passed to the limit $\varepsilon
\rightarrow 0$ and get the integral of these terms on $\mathbb{R}^3 \backslash
B_{W_j} (0, r_1)$. Thus we can directly merge the integrals of these terms on
$\mathbb{R}^3 \backslash B_{W_j} (0, r_1)$ and those on $B_{W_j} (0, r_1)$.

{\tmem{Case C}} There is another term which is integrable on neither $B_{W_j} (0,
r_1)$ nor $\mathbb{R}^3 \backslash B_{W_j} (0, r_1)$. We need to deal with it
specially, which we will elaborate on later.

\end{itemize}

\begin{remark}
    \label{rmk integrable}
  $| W_j  \tilde{p}_j |^{- 3} s_{m n}^{j, 3}$ and $| W_j  \tilde{p}_j |^{- 4}
  \sigma_{m n}^{j, 4}$ are singular and not integrable. But if we add these
  two terms together, the sum of them is just $s_{m n}^j - (s_{m n}^{j, 0} + |
  W_j  \tilde{p}_j |^{- 1} s_{m n}^{j, 1} + | W_j  \tilde{p}_j |^{- 2} s_{m
  n}^{j, 2})$. It is integrable since $s_{m n}^j, s_{m n}^{j, 0}, | W_j 
  \tilde{p}_j |^{- 1} s_{m n}^{j, 1}$ and $| W_j  \tilde{p}_j |^{- 2} s_{m
  n}^{j, 2}$ are all integrable. Therefore, on $\Gamma_s$ we consider these
  two terms together. In our strategy, we do not separate the integral of $|
  W_j  \tilde{p}_j |^{- 3} s_{m n}^{j, 3} + | W_j  \tilde{p}_j |^{- 4}
  \sigma_{m n}^{j, 4}$, which makes $J_{k, r m} + J_{k, s}$ computable.
\end{remark}

The following part is devoted to proving Theorem \ref{J asym exp}.

\begin{proof}
  To start with, we can first rewrite the asymptotic expansion of $c_{m n}$ as
  \begin{eqnarray}
    c_{m n}^{2, a} & = & (\lambda_{10})^2 T_{m n} (t, p) E (0) \cdummy X_{m n}
    (p), \\
    c_{m n}^{2, b} & = & - (\lambda_{10})^2 E (t) \cdummy \breve{X}_{m n}  (t,
    p),  \label{c2b def}\\
    c_{m n}^{3, a} & = & \mathi (\lambda_{10})^3 T_{m n} (t, p) \frac{f_{mn}
    E' (0) \cdummy A_{mn}  (p) \lambda_{n m} (p)}{\lambda_{n m} (p)^3}, \\
    c_{m n}^{3, b} & = & - \mathi (\lambda_{10})^3 \frac{f_{mn} E' (t) \cdummy
    \breve{A}_{mn}  (t, p) \breve{\lambda}_{n m} (t, p)}{\breve{\lambda}_{n m}
    (t, p)^3}, \\
    c_{m n}^{3, c} & = & \mathi T_{m n} (t, p) \int^t_0  (\lambda_{10})^3 E
    (s) \cdummy (\breve{A}_{n n} - \breve{A}_{m m}) (s, p) E (0) \cdummy X_{m
    n}  (p) d s. 
  \end{eqnarray}
  Then according to Proposition \ref{c result} we have
  \begin{eqnarray}
    c_{m n}^2 & = & c_{m n}^{2, a} + c_{m n}^{2, b}, \\
    c_{m n}^3 & = & c_{m n}^{3, a} + c_{m n}^{3, b} + c_{m n}^{3, c} . 
  \end{eqnarray}
  Similarly, we can also define $s_{m n}^{j, 2, a}, s_{m n}^{j, 2, b}, s_{m
  n}^{j, 3, a}, s_{m n}^{j, 3, b} \infixand s_{m n}^{j, 3, c}$, which are the
  limit of $c_{m n}^{2, a}, c_{m n}^{2, b}, c_{m n}^{3, a}, c_{m n}^{3, b}
  \infixand c_{m n}^{3, c}$ as $\varepsilon \to 0$. In the process of
  calculating $J_k$, there are several cases, which we will introduce
  respectively.
  
  \begin{case}
    For $c_{m n}^{2, b}$ and $c_{m n}^0$, we use the same strategy. We first
    focus on $c_{m n}^0$. For $c_{m n}^0$, we have
    \begin{eqnarray*}
        & \sum_{m, n} \int_{\Gamma_r \cup \Gamma_m} \breve{D}^k_{n m} (t, p) c_{m
       n}^0 (t, p) dp \\
       = & \sum_{m, n} \left( \int_{\Gamma^{\ast}} \breve{D}^k_{n
       m} (t, p) c_{m n}^0 (t, p) dp - \int_{\Gamma_s} \breve{D}^k_{n m} (t,
       p) c_{m n}^0 (t, p) dp \right) .
    \end{eqnarray*}
    Notice that $s_{m n}^{j, 0}$ and $\check{D}^k_{j, m n}$ are the limit of
    $c_{m n}^0$ and $\breve{D}^k_{n m}$. It follows that
    \[ \sum_{m, n} \int_{\Gamma_s} \breve{D}^k_{n m} (t, p) c_{m
       n}^0 (t, p) dp = \varepsilon^3 \sum_j \sum_{m, n \in \{ 0, 1 \}}
       \int_{B_{W_j} (0, r_1)} \check{D}^k_{j, m n} s_{m n}^{j, 0} d
       \tilde{p}_j + o (\varepsilon^3) . \]
    This means that we can merge and get
    \begin{align}
      & \sum_{m, n} \int_{\Gamma_r \cup \Gamma_m} \breve{D}^k_{n m} (t, p)
      c_{m n}^0 (t, p) dp + \varepsilon^3 \sum_j \sum_{m, n \in \{ 0, 1 \}}
      \int_{B_{W_j} (0, r_1)} \check{D}^k_{j, m n} s_{m n}^{j, 0} d
      \tilde{p}_j \nonumber\\
      = & \int_{\Gamma^{\ast}} \breve{D}^k_{0 0} (t, p) c_{00}^{i n} (p) dp +
      o (\varepsilon^3) .  \label{sym1}
    \end{align}
    Here we used the fact that only $c_{00}^{i n}$ is nonzero. Recall that
    \[ \breve{D}^k_{0 0} (t, p) = D^k_{0 0} \left( p + \varepsilon \int_0^t E
       (s) d s \right) . \]
    By change of variable, we immediately get
    \[ \int_{\Gamma^{\ast}} \breve{D}^k_{0 0} (t, p) c_{00}^{i n} (p) dp =
       \int_{\Gamma^{\ast}} D^k_{0 0} (p) c_{00}^{i n} \left( p - \varepsilon
       \int_0^t E (s) d s \right) dp. \]
    By Taylor's expansion, we finally get
    \begin{align*}
      & \sum_{m, n} \int_{\Gamma_r \cup \Gamma_m} \breve{D}^k_{n m} (t, p)
      c_{m n}^0 (t, p) dp + \varepsilon^3 \sum_j \sum_{m, n \in \{ 0, 1 \}}
      \int_{B_{W_j} (0, r_1)} \check{D}^k_{j, m n} s_{m n}^{j, 0} d
      \tilde{p}_j\\
      = & \int_{\Gamma^{\ast}} D^k_{0 0} (p) c_{00}^{i n} (p) dp - \varepsilon
      \int_0^t E (s) d s \cdummy \int_{\Gamma^{\ast}} \nabla c_{00}^{i n} (p)
      D^k_{0 0} (p) dp\\
      + & \frac{\varepsilon^2}{2} \int_0^t E (s) \otimes E (s) d s :
      \int_{\Gamma^{\ast}} \nabla^2 c_{00}^{i n} (p) D^k_{0 0} (p) dp + o
      (\varepsilon^3) .
    \end{align*}
    For $c_{m n}^{2, b}$, the contribution is given by
    \begin{align*}
      & \sum_{m, n} \int_{\Gamma_r \cup \Gamma_m} \breve{D}^k_{n m} (t, p)
      \left( \frac{\varepsilon}{\lambda_{10} (p)} \right)^2 c_{m n}^{2, b} (t,
      p) dp  \\
      & + \varepsilon^3 \sum_j \sum_{m, n \in \{ 0, 1 \}} \int_{B_{W_j}
      (0, r_1)} |  \tilde{p}_j |^{- 2} \check{D}^k_{j, m n} s_{m n}^{j, 2, b}
      d \tilde{p}_j\\
      = & 2 \varepsilon^2 \sum_{n \geqslant 1} \int_{\Gamma^{\ast}}
      \frac{c_{00}^{i n} (p) E (t) \cdummy \tmop{Im} (D_{0 n}  (p) D^k_{n 0} 
      (p))}{\lambda_{n 0} (p)^2} dp + o (\varepsilon^3) .
    \end{align*}
    The details are given in Appendix \ref{C1}.
  \end{case}

    \begin{case}
      For $c_{m n}^{2, a}$, $c_{m n}^{3, a}$ and $c_{m n}^{3, c}$, we should
      notice that they are oscillatory. Here we only show the calculation of
      $c_{m n}^{2, a}$. The other two terms are similar. For $c_{m n}^{2, a}$,
      we have
      \[ \sum_{m, n} \int_{\Gamma r \cup \Gamma_m} \left(
         \frac{\varepsilon}{\lambda_{10}} \right)^2 c_{m n}^{2, a} 
         \breve{D}^k_{n m} dp = L_1 + L_2, \]
      with
      \begin{eqnarray}
        L_1 & = & \varepsilon^2 \sum_{m, n} \int_{\Gamma_r} T_{m n} (t, p) E
        (0) \cdummy X_{m n}  (p)  \breve{D}^k_{n m} \chi_1 dp,  \label{L1
        def}\\
        L_2 & = & \varepsilon^2 \sum_{m, n} \int_{\Gamma_m} T_{m n} (t, p) E
        (0) \cdummy X_{m n}  (p)  \breve{D}^k_{n m} \chi_2 dp.  \label{L2 def}
      \end{eqnarray}
      $L_1$ is an oscillatory integral if $m \neq n$ and zero if $m = n$. We
      need to apply the stationary phase method. Since $D_{n m}, A_{n m},
      \lambda_{n m} $ are all periodical functions, we can think that
      $\Gamma^{\ast}$ has no boundary. Then the only boundary of $\Gamma_r$ is
      $\bigcup^N_{j = 1} \partial B_{W_j} \left( p_j^0, \frac{r_2}{2}
      \right)$. But remember that $\chi_1$ vanishes on the boundary of
      $\Gamma_r$. So there are no boundary terms when we apply the stationary
      phase method. Thanks to Assumption \ref{regular asump}, applying the
      stationary method, we immediately get $L_1 = O (\varepsilon^{7 / 2})$ and
      should be discarded. For $L_2$, we can directly pass to the limit
      $\varepsilon \rightarrow 0$ and get
      \begin{equation}
        L_2 = \varepsilon^3 \sum_j \sum_{m, n \in \{ 0, 1 \}}
        \int_{\mathbb{R}^3 \backslash B_{W_j} (0, r_1)}  \check{D}^k_{j, m n}
        | W_j  \tilde{p}_j |^{- 2} s_{m n}^{j, 2, a} d \tilde{p}_j + o
        (\varepsilon^3) . \label{limit L2}
      \end{equation}
      The proof of the limit above is in Appendix \ref{C2}. Similarly, we have
      \begin{align}
        & \sum_{m, n} \int_{\Gamma r \cup \Gamma_m} \left(
        \frac{\varepsilon}{\lambda_{10}} \right)^3 (c_{m n}^{3, a} + c_{m
        n}^{3, c})  \breve{D}^k_{n m} dp \nonumber\\
        = & \varepsilon^3 \sum_j \sum_{m, n \in \{ 0, 1 \}} \int_{\mathbb{R}^3
        \backslash B_{W_j} (0, r_1)}  \check{D}^k_{j, m n} | W_j  \tilde{p}_j
        |^{- 3}  (s_{m n}^{j, 3, a} + s_{m n}^{j, 3, c}) d \tilde{p}_j + o
        (\varepsilon^3) .  \label{L4 limit}
      \end{align}
      The details are in Appendix \ref{C3}.
    \end{case}
  
  \begin{case}
    For the remainders $\varsigma_{m n}^4$, it is similar to Case 2. On
    $\Gamma_r$, they have no contributions because they are $O (\varepsilon^4)
    .$ On $\Gamma_m$, by Lemma \ref{tricky}, we can pass to the limit
    $\varepsilon \rightarrow 0 +$ and get
    \begin{eqnarray*}
        &\sum_{m, n} \int_{\Gamma r \cup \Gamma_m} \left(
       \frac{\varepsilon}{\lambda_{10}} \right)^4 \varsigma_{m n}^4 
       \breve{D}^k_{n m} dp\\
       &= \varepsilon^3 \sum_j \sum_{m, n \in \{ 0, 1 \}}
       \int_{\mathbb{R}^3 \backslash B_{W_j} (0, r_1)}  \check{D}^k_{j, m n} |
       W_j  \tilde{p}_j |^{- 4} \sigma_{m n}^{j, 4} d \tilde{p}_j + o
       (\varepsilon^3) .
    \end{eqnarray*}
  \end{case}

  \begin{case}
      There is only $c_{m n}^{3, b} (t, p)$ left. When $m > 1$ or $n > 1$,
      $\lambda_{n m}$ is bounded from below and we have
      \begin{align*}
          & \sum_{m > 1 \infixor n > 1} \int_{\Gamma r \cup \Gamma_m}
        \breve{D}^k_{n m} (t, p) \left( \frac{\varepsilon}{\lambda_{10}}
        \right)^3 c_{m n}^{3, b} (t, p) dp\\
         = & \sum_{m > 1 \infixor n > 1} \int_{\Gamma^{\ast}} \breve{D}^k_{n
        m} (t, p)  \left( \frac{\varepsilon}{\lambda_{10}} \right)^3 c_{m
        n}^{3, b} (t, p) dp + o (\varepsilon^3)\\
         = & - \varepsilon^3 \sum_{m > 1 \infixor n > 1}
        \int_{\Gamma^{\ast}} \frac{f_{mn} (p) E' (t) \cdummy D_{mn}  (p)
        D^k_{n m} (p)}{\lambda_{n m} (p)^3} dp + o (\varepsilon^3) .
      \end{align*}
      Thanks to \eqref{D symmetry} and the definition of $f_{mn} (p)$ given by
      \eqref{f def}, we finally get
      \begin{align*}
        & \sum_{m > 1 \infixor n > 1} \int_{\Gamma r \cup \Gamma_m}
        \breve{D}^k_{n m} (t, p) \left( \frac{\varepsilon}{\lambda_{10}}
        \right)^3 c_{m n}^{3, b} (t, p) dp\\
        = & - \varepsilon^3 \sum_{n \geqslant 2} \int_{\Gamma^{\ast}}
        c_{00}^{i n} (p) \frac{(E' (t) \cdummy D_{0 n}  (p) D^k_{n 0} (p) + E'
        (t) \cdummy D_{n 0}  (p) D^k_{0 n} (p))}{\lambda_{n 0} (p)^3} dp + o
        (\varepsilon^3)\\
        = & - 2 \varepsilon^3 \sum_{n \geqslant 2} \int_{\Gamma^{\ast}}
        c_{00}^{i n} (p) \frac{E' (t) \cdummy \tmop{Re} (D_{0 n}  (p) D^k_{n
        0} (p))}{\lambda_{n 0} (p)^3} dp + o (\varepsilon^3) .
      \end{align*}
      
  \end{case}

  \begin{case}
        Now we consider the cases of $0 \leqslant n, m \leqslant 1$. In fact,
        we only need to consider $m = 0, n = 1$ and $m = 1, n = 0$ because
        $c_{00}^{3, b} = c_{11}^{3, b} = 0$. In these cases, $\lambda_{n m}$
        is small near Weyl nodes. Let
        \begin{eqnarray*}
          L_0 & = & \int_{\Gamma r \cup \Gamma_m} \breve{D}^k_{10} (t, p)
          \left( \frac{1}{\lambda_{10}} \right)^3 c_{01}^{3, b} (t, p) dp +
          \int_{\Gamma r \cup \Gamma_m} \breve{D}^k_{01} (t, p) \left(
          \frac{1}{\lambda_{10}} \right)^3 c_{10}^{3, b} (t, p) dp\\
          & = & 2 \int_{P_t^{- 1} \left( \Gamma_r \bigcup \Gamma_m \right)}
          \frac{c_{00}^{i n} (p) E' (t) \cdummy \tmop{Re} (D_{01}  (p) D^k_{1
          0}  (p))}{\lambda_{10} (p)^3} dp.
        \end{eqnarray*}
        When $\varepsilon \rightarrow 0 +$, $L_0$ will blow up. The
        singularity comes from Weyl nodes $p_j^0$. We will separate the regular part and singular part. In other words, we can rewritten as
        \begin{equation}
          L_0 = L_r + L_s, \label{L0}
        \end{equation}
        with
        \begin{eqnarray*}
          L_r & = & 2 \int_{\Gamma^{\ast}} E' (t) \cdummy \tmop{Re} \left(
          \frac{D_{01}  (p) D^k_{1 0}  (p)}{\lambda_{10} (p)^3} c_{00}^{i n}
          (p) \right.\\
          & - & \left. \sum_j I_{B_{W_j} (p_j^0, r_2)} (p) c_{00}^{i n}
          (p_j^0) \frac{D_{j, 01}  (p - p_j^0) D^k_{j, 10}  (p - p_j^0)}{| W_j
          (p - p_j^0) |^3} \right) d p + o (1),\\
          L_s & = & \sum_{m, n \in \{ 0, 1 \}} \sum_j \int_{P_t \left( B_{W_j}
          \left( p_j^0, \frac{r_2}{\varepsilon} \right) \right) \backslash
          B_{W_j} (p_j^0, r_1)} c_{00}^{i n} (p_j^0) s_{m n}^{j, 3, b} (t,
          \tilde{p}_j) \check{D}^{j, k}_{m n}  (t, \tilde{p}_j) d \tilde{p}_j
          .
        \end{eqnarray*}
        The details are in Appendix \ref{C4}. Here, $L_r$ is the regular part and $L_s$ is the singular part, which can be merged with the integral on $\Gamma_s$ and become integrable. (See Remark \ref{rmk integrable}.)
      \end{case}

  Combine Case 1 to Case 5 and we get
      \begin{align*}
        J_k (t) = & \varepsilon^3 \sum_j \sum_{m, n \in \{ 0, 1 \}}
        \int_{\mathbb{R}^3 \backslash B_{W_j} (0, r_1)}  \check{D}^k_{j, n m}
        (| W_j  \tilde{p}_j |^{- 2} s_{m n}^{j, 2, a} \nobracket\\
        + & \left. | W_j  \tilde{p}_j |^{- 3}  \left( s_{m n}^{j, 3, a} +
        I_{P_t \left( B_{W_j} \left( p_j^0, \frac{r_2}{\varepsilon} \right)
        \right)} (\tilde{p}_j) s_{m n}^{j, 3, b} + s_{m n}^{j, 3, c} \right) +
        | W_j  \tilde{p}_j |^{- 4} \sigma_{m n}^{j, 4} \right) d \tilde{p}_j\\
        + & \varepsilon^3 \sum_j \sum_{m, n \in \{ 0, 1 \}} \int_{B_{W_j} (0,
        r_1)} \check{D}^k_{j, m n} (| W_j  \tilde{p}_j |^{- 2} s_{m n}^{j, 2,
        a} + | W_j  \tilde{p}_j |^{- 3} s_{m n}^{j, 3} + | W_j  \tilde{p}_j
        |^{- 4} \sigma_{m n}^{j, 4}) d \tilde{p}_j\\
        + & J_{k, 1} (t) + o (\varepsilon^3)\\
        = & \varepsilon^3 \sum_j \sum_{m, n \in \{ 0, 1 \}} \int_{B_{W_j} (0,
        r_1)}  \check{D}^k_{j, n m} (\check{s}_{m n}^j - (s_{m n}^{j, 0} + |
        W_j  \tilde{p}_j |^{- 1} s_{m n}^{j, 1} + | W_j  \tilde{p}_j |^{- 2}
        s_{m n}^{j, 2, b} \nobracket \nobracket\\
        + & \left. \left. I_{\mathbb{R}^3 \backslash P_t \left( B_{W_j} \left(
        p_j^0, \frac{r_2}{\varepsilon} \right) \right)} (\tilde{p}_j) s_{m
        n}^{j, 3, b} \right) \right) d \tilde{p}_j + J_{k, 1} (t) + o
        (\varepsilon^3) .
      \end{align*}

  Recall that $J_{k, 1}$ is defined by \eqref{J1 def}. It is obvious that
  \[ \int_{\mathbb{R}^3} \check{D}^k_{m n}  | W_j  \tilde{p}_j |^{- 1} s_{m
     n}^{j, 1} d \tilde{p}_j = 0. \]
  We can also show that (See Appendix \ref{E5})
  \begin{equation}
    \int_{\mathbb{R}^3} \check{D}^k_{m n} s_{m n}^{j, 0} d \tilde{p}_j =
    \int_{\mathbb{R}^3} \check{D}^k_{m n} | W_j  \tilde{p}_j |^{- 2} s_{m
    n}^{j, 2, b} d \tilde{p}_j = 0. \label{s2b vanish}
  \end{equation}
  Here, the integral should be understand as the principle value integral.
  Recall that $J_{k, 2}$ is defined by \eqref{J2 def}. It follows that
  \begin{align*}
    & \varepsilon^3 \sum_j \sum_{m, n \in \{ 0, 1 \}} \int_{B_{W_j} (0, r_1)}
    \check{D}^k_{j, n m} (\check{s}_{m n}^j - (s_{m n}^{j, 0} + | W_j 
    \tilde{p}_j |^{- 1} s_{m n}^{j, 1} + | W_j  \tilde{p}_j |^{- 2} s_{m
    n}^{j, 2, b} \nobracket \nobracket\\
    + & \left. \left. I_{\mathbb{R}^3 \backslash P_t \left( B_{W_j} \left(
    p_j^0, \frac{r_2}{\varepsilon} \right) \right)} (\tilde{p}_j) s_{m n}^{j,
    3, b} \right) \right) d \tilde{p}_j\\
    = & \varepsilon^3 \sum_{m, n \in \{ 0, 1 \}} \int_{\mathbb{R}^3}
    \check{D}^k_{j, n m}  \left( \check{s}_{m n}^j - I_{\mathbb{R}^3
    \backslash P_t \left( B_{W_j} \left( p_j^0, \frac{r_2}{\varepsilon}
    \right) \right)} (\tilde{p}_j) s_{m n}^{j, 3, b} \right) d \tilde{p}_j\\
    = & \varepsilon^3 \int_{\mathbb{R}^3} \sum_{m, n \in \{ 0, 1 \}} \left(
    D^k_{j, n m} s_{m n}^j - c_{00}^{i n} (p_j^0) I_{| W_j \tilde{p}_j | >
    \frac{r_2}{\varepsilon}} \frac{E' (t) \cdummy D_{j, m n} D^k_{j, n m}}{|
    W_j \tilde{p}_j |^3} \right) d \tilde{p}_j\\
    = & J_{k, 2} (t) .
  \end{align*}
  Therefore we can conclude that
  \begin{equation}
    J_k (t) = J_{k, 1} (t) + J_{k, 2} (t) + o (\varepsilon^3) . \label{J exp
    final}
  \end{equation}
  This finishes the proof.
\end{proof}

We finish this section with some remarks.

\begin{remark}
    Here we give more interpretations on $J_{k, 1} (t)$ and $J_{k, 2} (t)$.
\begin{itemize}
    \item As we mentioned in Remark \ref{rm5}, in the expression of $J_{k, 1} (t)$, we have only used
$D_{0 n}  (p)$ and $\lambda_{n 0} (p)$ except the last term. Note that they are
functions defined on the whole Brillouin zone, and these terms still exist when there are no Weyl nodes.
    \item $J_{k, 2} (t)$ completely depends on the local property of Weyl nodes. It is
nothing but the current related to the limit system \eqref{limit system}.
$s_{m n}^{j, 3, b}$ is not integrable because it does not decay fast enough at
infinity. To prevent the integral of $s_{m n}^{j, 3, b}$ on the region $|
\tilde{p}_j | > 1$ from blowing up as $\varepsilon \rightarrow 0$, we restrict it
on $| \tilde{p}_j | < \frac{r_2}{\varepsilon}$. It leads to the appearance of
$D_{j, m n}$ in the last term of $J_{k, 1} (t)$.
  \item The last term of $J_{k, 1} (t)$ depends on both the functions defined on the whole Brillouin zone, i.e. $D_{0 n}  (p)$ and $\lambda_{n 0} (p)$, and the properties of
each Weyl nodes through $D_{j, 10} (\tilde{p}^j) $. The existence of Weyl nodes
makes the integral of $\frac{D_{01}  (p) D^k_{1 0}  (p)}{\lambda_{10} (p)^3}$
singular and then we need to subtract the term $$\sum_j I_{\Gamma_m \bigcup
\Gamma_s} (p) \frac{D_{j, 01}  (p - p_j^0) D^k_{j, 10}  (p - p_j^0)}{| W_j (p
- p_j^0) |^3}$$ from it to make it regular. In fact, the latter term can be seen as the singular part of the former term.

\end{itemize}

\end{remark}

\section{Conclusion}

In this paper, we have developed an asymptotic analysis theory for the semiclassical model of Bloch electrons in the presence of Weyl nodes. The main result is a rigorous derivation of the asymptotic expansion of the current, which explicitly demonstrates the contribution from the singular points. The technical innovation lies in dealing with the multi-scale nature and singularities through a novel strategy of dividing the Brillouin zone into different regions and treating each appropriately. The quantitative estimates obtained justify the approximations used in the physics literature \cite{de_juan_quantized_2017} to explain experimental observations.

This work opens up several directions for future research. One is to consider more complex models incorporating electron interactions or disorder. Another is to extend the analysis to other physical observables beyond the current. It would also be interesting to apply the expansion techniques developed here to other multiscale problems with singular features. Overall, this paper provides a mathematical foundation for studying topological effects in quantum dynamics, which will lead to many fruitful avenues for future exploration.

\appendix
\section{Some properties of $A$ and $D$}\label{A}

In this section, we will show some properties of $A$ and $D$. They are defined
by \eqref{A def} and \eqref{D def} and will appear repeatedly in the following
analysis.

We start with deriving useful identities involving them. Recall that
\begin{equation}
  H^0 (p) \Psi_n^0 (z, p) = \lambda_n (p) \Psi_n^0 (z, p),
\end{equation}
taking the gradient with respect to $p$ and taking the inner product with $\Psi_m^0 
(m \neq n)$ leads to
\begin{equation}
  \label{grad H} D_{mn} (p) - \mathi \lambda_m (p) A_{mn} (p) = - \mathi
  \lambda_n (p) A_{mn} (p) .
\end{equation}
Thus we have
\begin{equation}
  \label{AD relation} A_{mn} (p) = \frac{D_{mn} (p)}{\mathi \lambda_{mn} (p)}
  .
\end{equation}
This equation gives the relation between $D_{mn}$ and $A_{mn}$ when $m \neq
n$.

For the case where $m = n$, we do not have similar relations. If we repeat the
steps above, instead of \eqref{grad H}, we will get
\begin{equation}
  D_{nn} (p) - \mathi \lambda_n (p) A_{nn} (p) = \nabla \lambda_n (p) - \mathi
  \lambda_n (p) A_{nn} (p) .
\end{equation}
It follows that
\begin{equation}
  D_{nn} (p) = \nabla \lambda_n (p) .
\end{equation}
Although we do not get the relation between $A_{nn}$ and $D_{nn}$, this conclusion is also useful. A direct corollary is
\begin{equation}
  \label{delta mn} D_{mm} (p) - D_{nn} (p) = \nabla \lambda_{mn} (p) .
\end{equation}
For more information of $A_{nn}$, we can turn to the effective
Hamiltonian. The results have been shown in Appendix \ref{2.3}, but they are
valid only when $p$ is close to a Weyl node.

$A$ and $D$ also have good symmetry with respect to $p$. From the eigenvalue problem,
we know that
\begin{equation}
  \Psi^0_m (z, - p) = \bar{\Psi}^0_m (z, p),
\end{equation}
and
\begin{equation}
  \label{lambda symmetry} \lambda_m  (- p) = \lambda_m (p) .
\end{equation}
It follows that
\begin{equation}
  \label{A symmetry} A^k_{mn}  (- p) = A^k_{nm} (p) = \bar{A}^k_{mn} (p),
\end{equation}
and
\begin{equation}
  \label{D symmetry} D^k_{mn}  (- p) = - D^k_{nm} (p) = - \bar{D}^k_{mn} (p) .
\end{equation}
The symmetry will let a lot of terms cancel and simplify the calculations.



Regarding the periodicity in $p$, it is easy to check that we have the following relation,
\begin{eqnarray*}
  \lambda_n (p + Y) & = & \lambda_n (p),\\
  \Psi_n^0 (p + Y, z) & = & e^{- \mathi Y \cdummy z} f (p, z) .
\end{eqnarray*}
It follows that
\begin{eqnarray*}
  D_{m n} (p + Y) & = & D_{m n} (p),\\
  A_{m n} (p + Y) & = & A_{m n} (p) .
\end{eqnarray*}
In other words, $D_{n m}, A_{n m}, \lambda_{n m} $ are all periodical
functions on $\Gamma^{\ast}$.

\section{Some useful lemmas}\label{Osc int}

In this part, we will introduce some lemmas for
convergence of the integrals of interest.

\subsection{Fourier Integral}

To calculate the asymptotic expansion of $c_{m n}$, some oscillatory integrals
are involved, such as \eqref{cmn int}. More generally, we seek for a tool to
\begin{equation}
  \label{fourier integral} \int_a^b e^{\frac{\mathi}{\varepsilon} p (t)} q (t)
  dt.
\end{equation}
Here, the functions $p (t)$ and $q (t)$ satisfy the following properties:

\begin{question}
  \label{pq regularity}
  
  (i) $p (t)$ is real and $p' (t) > 0$ has a positive lower bound $\lambda$.
  q(t) can be real or complex.
  
  (ii) In $[a, b], p^{(m)} (t)$ and $q^{(m)} (t)$ are integrable, and $m$ is a
  nonnegative integer.
  
  (iii) There exists a constant $C$ independent of $\xi$, such that
  \begin{equation}
      | q^{(k)} (t) | + \left| \frac{p^{(k)} (s)}{p' (s)} \right| \leqslant
    C, \quad \forall k \leqslant m. \label{q regularity} 
  \end{equation}
\end{question}
Now let $P^s [p, q] (t)$ denote

\begin{equation}
  P^s [p, q] (t) = \left\{ \frac{1}{p' (t)}  \frac{d}{dt} \right\}^s \frac{q
  (t)}{p' (t)}, \quad s = 0, 1, \cdots, m.
\end{equation}

After change of variable $s=p(t)$, \eqref{fourier integral} becomes
\begin{equation*}
    \int_a^b e^{\frac{\mathi}{\varepsilon} p (t)} q (t)
  dt = \int_{p(a)}^{p(b)} e^{\frac{\mathi}{\varepsilon} s} \frac{q(p^{-1}(s))}{p'(p^{-1}(s))} ds
  dt
\end{equation*}

Notice that this is a fourier integral with no stationary points. Then we introduce the following lemma, which follows by integration by parts.
\begin{lemma}
  \label{asymptotic for fourier}Assume the conditions and notation above. Then
  \begin{equation}
    \begin{split}
      \int_a^b e^{\frac{\mathi}{\varepsilon} p (t)} q (t) dt & = \sum_{s =
      0}^{m - 2} (e^{\frac{\mathi}{\varepsilon} p (a)} P^s [p, q] (a) -
      e^{\frac{\mathi}{\varepsilon} p (b)} P^s [p, q] (b)) (\mathi
      \varepsilon)^{s + 1} + \delta_m (\varepsilon) \label{int est} .
    \end{split}
  \end{equation}
  The error term $\delta_m (\varepsilon)$ is bounded by
  \[ | \delta_m (\varepsilon) | \leqslant C \left( \frac{\varepsilon}{\lambda}
     \right)^m, \]
  with a constant $C$ independent of $\varepsilon$.
\end{lemma}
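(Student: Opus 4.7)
The plan is to prove the lemma by repeated integration by parts, which is the classical non-stationary phase argument adapted to the nonlinear phase $p(t)$. The key identity $e^{\frac{\mathi}{\varepsilon} p(t)} = -\mathi\varepsilon\, \frac{1}{p'(t)}\frac{d}{dt} e^{\frac{\mathi}{\varepsilon} p(t)}$ is well-defined since $p'(t) \geq \lambda > 0$ by Assumption~\ref{pq regularity}(i). Equivalently, after the change of variable $u = p(t)$ the integral becomes a Fourier integral with linear phase, $\int_{p(a)}^{p(b)} e^{\mathi u/\varepsilon}\tilde q(u)\,du$ with $\tilde q(u) = q(p^{-1}(u))/p'(p^{-1}(u))$, for which the standard non-stationary phase expansion via integration by parts in $u$ applies; the operator $\frac{1}{p'(t)}\frac{d}{dt}$ in the definition of $P^s[p,q]$ is precisely the pullback of $\frac{d}{du}$ along this change of variable.

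Carrying out one integration by parts against $\frac{d}{dt} e^{\frac{\mathi}{\varepsilon} p(t)}$ produces the $s=0$ boundary contribution $\mathi\varepsilon\bigl(e^{\frac{\mathi}{\varepsilon} p(a)} P^0[p,q](a) - e^{\frac{\mathi}{\varepsilon} p(b)} P^0[p,q](b)\bigr)$ plus a new integral $\mathi\varepsilon\int_a^b \frac{d}{dt}(q/p')\,e^{\frac{\mathi}{\varepsilon} p}\,dt$. Using the chain-rule identity $\frac{d}{dt}P^s[p,q] = p'(t)\, P^{s+1}[p,q]$, the new integrand equals $p'(t)\, P^1[p,q](t)\, e^{\frac{\mathi}{\varepsilon}p(t)}$, of exactly the same shape as the original with $q$ replaced by $p'\, P^1[p,q]$, so the procedure iterates. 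After $k$ iterations one obtains
\[
\int_a^b e^{\frac{\mathi}{\varepsilon} p(t)} q(t)\,dt = \sum_{s=0}^{k-1}(\mathi\varepsilon)^{s+1}\bigl(e^{\frac{\mathi}{\varepsilon} p(a)}P^s[p,q](a) - e^{\frac{\mathi}{\varepsilon} p(b)}P^s[p,q](b)\bigr) + (\mathi\varepsilon)^k\!\int_a^b p'(t)\,P^k[p,q](t)\, e^{\frac{\mathi}{\varepsilon} p(t)}\,dt.
\]
Taking $k=m$ and absorbing the $s=m-1$ boundary term together with the leftover integral into $\delta_m(\varepsilon)$ yields the claimed expansion with the sum truncated at $s=m-2$.

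The main obstacle is the quantitative bound $|\delta_m(\varepsilon)| \leq C(\varepsilon/\lambda)^m$, because each application of $\frac{1}{p'}\frac{d}{dt}$ generates many terms by the Leibniz rule and the individual powers of $1/p'$ must be tracked to confirm that only the expected power of $1/\lambda$ appears. To handle this I would prove by induction on $s$ that $\|P^s[p,q]\|_{L^\infty([a,b])} \leq C_s\,\lambda^{-(s+1)}$: expanding $\frac{1}{p'}\frac{d}{dt}P^s$ via the Leibniz rule, every derivative landing on a $1/p'$ factor produces $-p''/(p')^2 = -(p''/p')\cdot(1/p')$, which is bounded by $C/p'$ thanks to Assumption~\ref{pq regularity}(iii), and every derivative landing on a $q^{(k)}$ factor is bounded by $C$; each application of $\frac{1}{p'}\frac{d}{dt}$ therefore contributes one additional factor of $\lambda^{-1}$ while keeping derivative orders of $p$ and $q$ bounded by $m$. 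The $s=m-1$ boundary term is then bounded by $C\varepsilon^m/\lambda^m$. For the remainder integral the change of variable $u = p(t)$ converts it to $(\mathi\varepsilon)^m\int_{p(a)}^{p(b)}\tilde q^{(m)}(u)\,e^{\mathi u/\varepsilon}\,du$, whose modulus is at most $\varepsilon^m\,(p(b)-p(a))\,\|P^m[p,q]\|_\infty$; since $[a,b]$ is compact this is likewise $O((\varepsilon/\lambda)^m)$ after absorbing the bounded length of integration into $C$. Combining the two estimates yields the lemma.
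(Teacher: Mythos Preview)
Your approach is correct and is exactly what the paper does: the paper simply remarks that after the substitution $u=p(t)$ the integral becomes a Fourier integral with no stationary points and that the lemma ``follows by integration by parts,'' which is the argument you carry out in full. The only minor imprecision is that your bound on the last remainder uses $\|P^m\|_\infty$, which involves $p^{(m+1)}$, one derivative beyond what Assumption~\ref{pq regularity}(iii) literally controls; this is a looseness already present in the paper's stated hypotheses (and harmless in its applications, where the phase is smooth and all derivative ratios are bounded) rather than a flaw in your method.
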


\subsection{Calculation of an oscillatory integral}

In this part we will treat a special oscillatory integral that we will meet
later. Specificly, we will consider the integral
\begin{equation}
  I_{jk} [f] (a, b, p) = \int_a^b T_{k j} (s, p) f (s, p) ds, \text{ } j \neq
  k \label{int osc} .
\end{equation}
Here, $T_{j k}$ is defined by
\begin{eqnarray}
  T_{jk} (t, p) & = & e^{\frac{\mathi}{\varepsilon}  \int_0^t
  \breve{\lambda}_{jk} (r, p) dr} . \label{T def} 
\end{eqnarray}
and $f (t, p)$ satisfy the following property:\\
\begin{equation}\label{assump:poly}
    \begin{split}
        &\text{ $f$ is the polynomial of the following variables:}\frac{\lambda_{10}}{\lambda_{m n}}, \{ (\lambda_{10})^{l - 1} \nabla_p^{(l)}
        \lambda_{m n} \}_{l \geqslant 1},\\
        &\{ (\lambda_{10})^l \nabla^{(l)}_p D_{m n}
        \}_{l \geqslant 0}, \{ (\lambda_{10})^{l + 1} \nabla^{(l)}_p A_{m n} \}_{l
        \geqslant 0}, \left\{ \frac{\varepsilon E^{(l)} (t)}{\lambda_{10}} \right\}_{l
        \geqslant 0},\frac{\lambda_{10}}{\breve{\lambda}_{m n}},\\
        &\{ (\lambda_{10})^{l- 1} \nabla_p^{(l)} \breve{\lambda}_{m n} \}_{l \geqslant 1},\{(\lambda_{10})^l \nabla^{(l)}_p \breve{D}_{m n} \}_{l \geqslant 0}, \{
        (\lambda_{10})^{l + 1} \nabla^{(l)}_p \breve{A}_{m n} \}_{l \geqslant 0}.
    \end{split}
\end{equation}

Thanks to Assumption \ref{regular asump}, it is obvious that each component of
this polynomial is bounded. Then it follows that $f (t, p)$ is bounded
independently of $\varepsilon$ and $p$. For any function $g(t, p)$, the time derivative of $\breve{g}$ is
\[ \frac{\partial}{\partial t}  \breve{g} (t, p) = \breve{g_t} (t, p) + \varepsilon E (t) \cdummy
   \nabla_p  \breve{g} (t, p) = \frac{\varepsilon E (t)}{\lambda_{10}} \cdummy
   (\lambda_{10} \nabla_p  \breve{g} (t, p)) . \]
Then we obtain that if $g$ satisfies \eqref{assump:poly}, then $\frac{\partial}{\partial t}  \breve{g} (t, p)$ also satisfies \eqref{assump:poly}. Thus it is also bounded and in fact, by
induction we obtain that $\left( \frac{\partial}{\partial t}  \right)^k \breve{g} (t,
p)$ is bounded independently of $\varepsilon$ and $p$ and so is
\[ \frac{1}{\breve{\lambda}_{jk} (t, p)} \left( \frac{\partial}{\partial t}
   \right)^k \int_0^t \breve{\lambda}_{jk} (r, p) dr \leqslant C''_k, \]
by the same argument. Then this integral satisfies the (ii) and (iii) of
Assumption \ref{pq regularity}. In $\Gamma_r$ we know that
\begin{equation}
  | \lambda_{jk} (p) | > C > 0, \text{ } \forall p \in \Gamma_r, \label{lower
  bound 1}
\end{equation}
for some constant $C$ independent of $\varepsilon$. For a Weyl node $p_j^0$,
in $B_{W_j} (p_j^0, r_2) \backslash B_{W_j} (p_j^0, \varepsilon r_1)$ we know
that
\begin{equation}
  | \lambda_{jk} (p) | > C | p - p_j^0 | > 0, \text{ } \forall p \in B_{W_j} (p_j^0,
  r_2) \backslash B_{W_j} (p_j^0, \varepsilon r_1), \label{lower bound 2}
\end{equation}
for some constant $C$ independent of $\varepsilon$, because we have proved
that $\eqref{lambda asym}$ near Weyl nodes. \eqref{lower bound 1} and
\eqref{lower bound 2} guarantee that this integral satisfies (i) of Assumption
\ref{pq regularity}. Therefore, we can apply Lemma \ref{asymptotic for
fourier}.

Let $P_{jk}^s [f]$ denote $P^s [\int_0^t \breve{\lambda}_{k j} (r, p) dr, f]$.
For $P_{jk}^s [f]$, we have

\begin{align*}
  P_{jk}^0 [f] (t, p) & = \frac{f (t, p)}{\breve{\lambda}_{k j} (t, p)},\\
  P_{jk}^1 [f] (t, p) & = \frac{\partial_t f (t, p)  \breve{\lambda}_{k j} (t,
  p) - \varepsilon E (t) \cdot \nabla_p  \breve{\lambda}_{k j} (t, p) f (t,
  p)}{\breve{\lambda}_{k j} (t, p)^3} .
\end{align*}

Now we can apply Lemma \ref{asymptotic for fourier} and get

\begin{lemma}
  \label{A2}Suppose Assumption \ref{regular asump} is satisfied. For the
  integral given by \eqref{int osc} with $j \neq k$, its asymptotic expansion
  is given by
  \begin{equation}
    I_{jk} [f] (a, b, p) = \sum_{s = 0}^{m - 2} \tilde{P}_{jk}^s [f] (t, p)
    |_{t = b}^a  (\mathi \varepsilon)^{s + 1} + \delta_{j k}^m [f]
    (\varepsilon) . \label{int exp}
  \end{equation}
  Here, we abuse the notation and let $\delta_{j k}^m [f] (\varepsilon)$
  denote the error term in Lemma \ref{asymptotic for fourier}. $\tilde{P}$ is
  defined by
  \begin{eqnarray*}
    \tilde{P}_{jk}^s [f] (t) & = & T_{k j} (t, p) P_{jk}^s [f] (t, p),
  \end{eqnarray*}
\end{lemma}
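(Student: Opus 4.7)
The plan is to apply Lemma~\ref{asymptotic for fourier} directly to the integral \eqref{int osc} with phase $\phi(t) := \int_0^t \breve{\lambda}_{kj}(r,p)\,dr$ and amplitude $q(t) := f(t,p)$. Under this identification $e^{\mathi \phi(t)/\varepsilon} = T_{kj}(t,p)$ and $\phi'(t) = \breve{\lambda}_{kj}(t,p)$, so the boundary terms $e^{\mathi \phi(c)/\varepsilon}\,P^s[\phi,q](c)$ produced by Lemma~\ref{asymptotic for fourier} become exactly $\tilde{P}^s_{jk}[f](c,p)$ with the remainder inheriting the $O((\varepsilon/\lambda)^m)$ bound. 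It therefore suffices to verify Assumption~\ref{pq regularity} for this particular $(\phi,q)$, uniformly in $p\in\Gamma_r\cup\Gamma_m$ and in the range of integration.

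First I would verify condition (i). Since each Floquet band $\lambda_m(p)$ is real-valued, so is $\breve{\lambda}_{kj}$; by replacing $\phi$ with $-\phi$ if necessary we may assume $\phi'>0$. The uniform positive lower bound for $\phi'$ is exactly what is furnished by \eqref{lower bound 1} on $\Gamma_r$ (giving $|\breve{\lambda}_{kj}|>C$ independent of $\varepsilon$), and by \eqref{lower bound 2} combined with the characteristic-line estimate \eqref{small dist} on $\Gamma_m$ (giving $|\breve{\lambda}_{kj}(t,p)|\geq C\varepsilon r_1$). Condition (ii), integrability of the derivatives $\phi^{(m)}$ and $q^{(m)}$, is immediate from the smoothness imposed in Assumption~\ref{regular asump}.

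The substantive step is condition (iii), the uniform bound $|q^{(l)}(t)|+|\phi^{(l)}(t)/\phi'(t)|\leq C$. Here I would exploit the closure property of the polynomial class defining $f$: since $\partial_t \breve{g}(t,p) = \frac{\varepsilon E(t)}{\lambda_{10}}\cdot(\lambda_{10}\nabla_p \breve{g})(t,p)$, each $\partial_t$ turns a factor $(\lambda_{10})^l\nabla_p^{(l)}$ (acting on $\lambda_{mn}$, $D_{mn}$, $A_{mn}$ or their $\breve{\ }$ counterparts) into an allowed factor of one higher order, while $\varepsilon E(t)/\lambda_{10}$ is itself on the list \eqref{assump:poly}. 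Hence the class is stable under $\partial_t$, and by Assumption~\ref{regular asump} every member is bounded uniformly in $(p,\varepsilon)$; this yields $|q^{(l)}|\leq C$ and, applied to $g=\lambda_{kj}$, also $|(\partial_t)^{l-1}\breve{\lambda}_{kj}|\leq C|\breve{\lambda}_{kj}|$, i.e.\ $|\phi^{(l)}/\phi'|\leq C$.

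The main obstacle will be this bookkeeping for $(\partial_t)^l \breve{g}$, making sure that the factor of $\varepsilon$ produced by each $\partial_t$ is exactly absorbed by the companion factor $1/\lambda_{10}$, uniformly across the transition between $\Gamma_r$ (where $\lambda_{10}\sim 1$) and $\Gamma_m$ (where $\lambda_{10}$ can be as small as $\varepsilon r_1$). Once this closure property is in place, the expansion \eqref{int exp} and the error bound $|\delta_{jk}^m[f](\varepsilon)|\leq C(\varepsilon/\lambda)^m$ follow from Lemma~\ref{asymptotic for fourier} by simply unwinding the definitions of $P^s_{jk}[f]$ and $\tilde{P}^s_{jk}[f]$.
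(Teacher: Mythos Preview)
Your proposal is correct and follows essentially the same approach as the paper: verify Assumption~\ref{pq regularity} by invoking the lower bounds \eqref{lower bound 1}--\eqref{lower bound 2} for condition (i) and the closure of the polynomial class \eqref{assump:poly} under $\partial_t$ for condition (iii), then apply Lemma~\ref{asymptotic for fourier} directly. The only minor sharpening is that on $\Gamma_m$ the pointwise lower bound $\lambda \sim \lambda_{10}(p)$ (not merely the worst case $C\varepsilon r_1$) is what produces the error of size $(\varepsilon/\lambda_{10})^m$, but you already have the ingredients for this in your discussion of \eqref{lower bound 2}.
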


Let $m = 3$, we get the asymptotic expansion of $I_{jk} [f]$.

\subsection{A technical lemma on convergence}

This following lemma is used in Section \ref{4}, which is on the convergence of integrals.

\begin{lemma}
  \label{tricky} Suppose $F^{\varepsilon}$ is a smooth function on $B (p_j^0, r_2)
  \backslash B (p_j^0, \varepsilon r_1)$, and there exists $l \geqslant 2$ such
  that $F^{\varepsilon}$ satisfies the following two conditions:
\begin{enumerate}
    \item There exists a function $F$, such that $\forall \alpha \in \mathbb{N}^3$
  with $| \alpha | \leqslant \max (3 - l, 0)$,
  \begin{equation}
    \lim_{\varepsilon \rightarrow 0 +} \varepsilon^l \partial_p^{\alpha}
    F^{\varepsilon} (t, p_j^0 + \varepsilon p) = \partial_p^{\alpha} F (t, p),
    a.e. \quad \tmop{on} \quad B (p_j^0, r_2) \backslash B (p_j^0, \varepsilon
    r_1) . \label{cond1lem}
  \end{equation}
  \item There exists a constant $C$ independent of $\varepsilon$, such that
  $\forall \alpha \in \mathbb{N}^3$ with $| \alpha | \leqslant \max (3 - l,
  0)$,
  \begin{equation}
    | \varepsilon^l \partial_p^{\alpha} F^{\varepsilon} (t, p_j^0 + \tilde{p})
    | < C | \tilde{p} |^{- l - | \alpha |} . \label{cond2lem}
  \end{equation}
\end{enumerate}

  If $l = 2$, we need to additionally assume that
  \begin{equation}
    F^{\varepsilon} (t, p) = 0 \quad \tmop{on} \quad \partial B (p_j^0, r_2) .
    \label{cond3lem}
  \end{equation}
  Then
  \[ \lim_{\varepsilon \rightarrow 0 +} \varepsilon^{l - 3} \int_{B (p_j^0,
     r_2) \backslash B (p_j^0, \varepsilon r_1)} e^{\frac{\mathi}{\varepsilon}
     \lambda_{10} (p) t} F^{\varepsilon} (t, p) dp = \int_{\mathbb{R}^3
     \backslash B (0, r_1)} e^{\mathi | p | t} F (t, p) dp. \]
  If $l = 2$, the right-hand side should be understood as
  \[ \int_{\mathbb{R}^3 \backslash B (0, r_1)} e^{\mathi | p | t} F (t, p) dp
     = \lim_{\varepsilon \rightarrow 0 +} \int_{\mathbb{R}^3 \backslash B (0,
     r_1)} e^{\mathi | p | t - \varepsilon | p |^2} F (t, p) dp. \]
\end{lemma}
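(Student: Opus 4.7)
The plan is to push the singular point to the origin by rescaling, show pointwise convergence of the integrand to the claimed limit, and justify the passage case by case in $l$. Concretely, I change variables $p = p_j^0 + \varepsilon \tilde q$, so that $dp = \varepsilon^3 d\tilde q$ and the annulus $B(p_j^0, r_2) \setminus B(p_j^0, \varepsilon r_1)$ becomes $B(0, r_2/\varepsilon) \setminus B(0, r_1)$. Setting $H^\varepsilon(t, \tilde q) := \varepsilon^l F^\varepsilon(t, p_j^0 + \varepsilon \tilde q)$ and $\phi^\varepsilon(\tilde q) := \lambda_{10}(p_j^0 + \varepsilon \tilde q)/\varepsilon$, the prefactor $\varepsilon^{l-3}$ combines with the Jacobian $\varepsilon^3$ to rewrite the left-hand side as
\begin{equation*}
\int_{\{r_1 < |\tilde q| < r_2/\varepsilon\}} e^{\mathi \phi^\varepsilon(\tilde q) t} H^\varepsilon(t, \tilde q) \, d\tilde q.
\end{equation*}
By \eqref{cond1lem} with $\alpha = 0$ one has $H^\varepsilon \to F$ a.e., and by the effective Hamiltonian analysis in Lemma \ref{eh lemma} and Proposition \ref{eff result}, $\phi^\varepsilon \to |W_j \tilde q|$ locally uniformly away from the origin (which, up to a linear change of variable absorbing $W_j$, is the $|p|$ appearing on the right-hand side). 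Hence the integrand converges pointwise a.e.\ to $e^{\mathi |W_j \tilde q| t} F(t, \tilde q)$, restricted to $|\tilde q| > r_1$.

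When $l \geq 4$, \eqref{cond2lem} supplies the dominating function $C|\tilde q|^{-l}$, which is integrable on $\{|\tilde q| > r_1\} \subset \mathbb{R}^3$ since $l > 3$; dominated convergence then concludes the proof immediately.

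The borderline case $l = 3$ must exploit oscillatory cancellation because $|\tilde q|^{-3}$ is only logarithmically non-integrable at infinity. I split the domain at a large but fixed radius $R$: on the shell $r_1 < |\tilde q| < R$ dominated convergence still applies, while on the tail $R < |\tilde q| < r_2/\varepsilon$ I pass to spherical coordinates $\tilde q = r\omega$ and integrate by parts in $r$ against $e^{\mathi |W_j \omega| r t}$. Since at this value of $l$ the bound \eqref{cond2lem} provides no derivative control on $H^\varepsilon$, the integration by parts is performed against the phase: the factor $1/(\mathi t\, \partial_r \phi^\varepsilon)$ is smooth by the smoothness of $\lambda_{10}$, so the redistributed integrand inherits an extra factor of $1/r$ and becomes absolutely integrable. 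The boundary terms at $r = R$ and $r = r_2/\varepsilon$ are $O(1/R)$ and $O(\varepsilon/r_2)$ respectively, both vanishing, and letting $R \to \infty$ closes the argument via a $\limsup$--$\liminf$ sandwich.

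The case $l = 2$ is the most delicate, and the right-hand side itself only makes sense through the regularization $e^{-\varepsilon|p|^2}$ built into the statement. My plan is to regularize the rescaled integral in parallel by a factor $e^{-\delta |\tilde q|^2}$: for each fixed $\delta > 0$, the integrand is dominated by $C|\tilde q|^{-2} e^{-\delta|\tilde q|^2}$ and dominated convergence yields the $\varepsilon \to 0$ limit. To remove the regularization, I use the first-order derivative bounds $|\partial^\alpha H^\varepsilon| \leq C|\tilde q|^{-2-|\alpha|}$ available at $l = 2$, together with the hypothesis $F^\varepsilon|_{\partial B(p_j^0, r_2)} = 0$, which eliminates the outer boundary term in one integration by parts against the phase; the resulting integrand is absolutely integrable uniformly in $(\varepsilon, \delta)$, enabling a diagonal argument to interchange the two limits. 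The principal obstacle throughout the proof is precisely this need to convert the oscillation of $e^{\mathi \phi^\varepsilon t}$ into effective decay in the borderline regimes $l \in \{2, 3\}$, where the pointwise bound is just barely not integrable: for $l = 3$ this must be done without any derivative control on the density, and for $l = 2$ the outer Dirichlet-type boundary condition on $F^\varepsilon$ is what closes the integration by parts.
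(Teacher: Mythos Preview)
Your overall strategy---rescale to the blown-up annulus, establish pointwise convergence, and justify the limit by dominated convergence for large $l$ and by integration by parts against the oscillatory phase in the borderline regimes---is exactly the paper's. The main technical difference is the choice of radial variable: the paper does not use the Euclidean radius $r = |\tilde q|$ but instead takes $q = \lambda_{10}(p_j^0 + r\omega)$ as the radial coordinate (via the inverse $r = r_j(q,\omega)$ set up in \eqref{change vari}), and then rescales $\tilde q = q/\varepsilon$. This straightens the phase to exactly $e^{\mathi \tilde q t}$, so the integration by parts in the borderline cases is against a \emph{fixed} oscillation rather than against $e^{\mathi \phi^\varepsilon t}$ with an $\varepsilon$-dependent amplitude $\partial_r\phi^\varepsilon$. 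That choice removes one layer of $\varepsilon$-bookkeeping; your version is workable but requires tracking $\partial_r\phi^\varepsilon$ and $\partial_r^2\phi^\varepsilon$ uniformly.

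There is one genuine inconsistency in your $l=3$ case. You correctly observe that at $l=3$ the hypothesis \eqref{cond2lem} gives no bound on derivatives of $H^\varepsilon$, and then claim that after integrating by parts ``the redistributed integrand inherits an extra factor of $1/r$.'' But the redistributed integrand contains the term $(\partial_r H^\varepsilon)\, r^2 /(\mathi t\,\partial_r\phi^\varepsilon)$, and without a uniform bound on $\partial_r H^\varepsilon$ you cannot assert that this is $O(r^{-2})$ and invoke dominated convergence. The paper's proof in the range $2 < l \le 3$ also differentiates the density (see the three displayed terms after the integration by parts), so it is tacitly using more than the stated $|\alpha|=0$ control---presumably justified by the smoothness assumption on $F^\varepsilon$ together with the specific structure of the densities that actually arise in Appendices~\ref{C2}--\ref{C3}. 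You should either strengthen the hypothesis or, as the paper effectively does, appeal to that structure rather than disclaim the derivative control you then use.

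For $l=2$ the paper simply integrates by parts a second time (this is where the boundary condition \eqref{cond3lem} kills the outer boundary term), whereas you introduce an auxiliary Gaussian regularization and a diagonal argument. Both routes work; the paper's is shorter because with the straightened phase the second integration by parts is mechanical, while your regularization buys you nothing extra and still requires the single IBP plus the boundary condition to get uniformity in $\delta$.
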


\begin{proof}
  From \eqref{change vari}, we know that
  \begin{eqnarray*}
    &  & \varepsilon^{l - 3} \int_{B (p_j^0, r_2) \backslash B (p_j^0,
    \varepsilon r_1)} e^{\frac{\mathi}{\varepsilon} \lambda_{10} (p) t}
    F^{\varepsilon} (t, p) dp\\
    & = & \varepsilon^{l - 3} \int_{\mathbb{S }^2_{W_j}} \int_{\lambda_{10}
    (p_j^0 + \varepsilon r_1 \omega)}^{\lambda_{10} (p_j^0 + r_2 \omega)}
    e^{\frac{\mathi}{\varepsilon} q t} F^{\varepsilon} (t, p_j^0 + r_j (q,
    \omega) \omega) r_j (q, \omega)^2 \frac{\partial}{\partial q} r_j (q,
    \omega) dq d \omega\\
    & = & \int_{\mathbb{S }^2_{W_j}} \int_{\lambda_{10} (p_j^0 + \varepsilon
    r_1 \omega) / \varepsilon}^{\lambda_{10} (p_j^0 + r_2 \omega) /
    \varepsilon} e^{\mathi \tilde{q} t} (\varepsilon^l F^{\varepsilon} (t,
    p_j^0 + r_j (\varepsilon \tilde{q}, \omega) \omega)) \left( \frac{r_j
    (\varepsilon \tilde{q}, \omega)}{\varepsilon} \right)^2
    \frac{\partial}{\partial q} r_j (\varepsilon \tilde{q}, \omega) d
    \tilde{q} d \omega
  \end{eqnarray*}
  If $l > 3$, we know that
  \[ (\varepsilon^l F^{\varepsilon} (t, p_j^0 + r_j (\varepsilon \tilde{q},
     \omega) \omega)) \left( \frac{r_j (\varepsilon \tilde{q},
     \omega)}{\varepsilon} \right)^2 \leqslant C | \tilde{q} |^{2 - l} . \]
  by the dominated convergence theorem, we get
  \begin{eqnarray*}
    \lim_{\varepsilon \rightarrow 0 +} \varepsilon^{l - 3} \int_{B (p_j^0,
    r_2) \backslash B (p_j^0, \varepsilon r_1)} e^{\frac{\mathi}{\varepsilon}
    \lambda_{10} (p) t} F^{\varepsilon} (t, p) dp & = & \int_{\mathbb{S
    }^2_{W_j}} \int_{r_1}^{+ \infty} e^{\mathi \tilde{q} t} (F (t, \tilde{q}
    \omega)) \tilde{q}^2 d \tilde{q} d \omega\\
    & = & \int_{\mathbb{R}^3 \backslash B (0, r_1)} e^{\mathi | p | t} F (t,
    p) dp.
  \end{eqnarray*}
  If $2 < l \leqslant 3$, by integrating by parts, we obtain
  \begin{eqnarray*}
    &  & \varepsilon^{l - 3} \int_{B (p_j^0, r_2) \backslash B (p_j^0,
    \varepsilon r_1)} e^{\frac{\mathi}{\varepsilon} \lambda_{10} (p) t}
    F^{\varepsilon} (t, p) dp\\
    & = & \int_{\mathbb{S }^2_{W_j}} \int_{\lambda_{10} (p_j^0 + \varepsilon
    r_1 \omega) / \varepsilon}^{\lambda_{10} (p_j^0 + r_2 \omega) /
    \varepsilon} e^{\mathi \tilde{q} t} (\varepsilon^l F^{\varepsilon} (t,
    p_j^0 + r_j (\varepsilon \tilde{q}, \omega) \omega)) \left( \frac{r_j
    (\varepsilon \tilde{q}, \omega)}{\varepsilon} \right)^2
    \frac{\partial}{\partial q} r_j (\varepsilon \tilde{q}, \omega) d
    \tilde{q} d \omega\\
    & = & \frac{1}{\mathi t} \int_{\mathbb{S }^2_{W_j}} e^{\mathi \tilde{q}
    t} (\varepsilon^l F^{\varepsilon} (t, p_j^0 + r_j (\varepsilon \tilde{q},
    \omega) \omega)) \left. \left( \frac{r_j (\varepsilon \tilde{q},
    \omega)}{\varepsilon} \right)^2 \frac{\partial}{\partial q} r_j
    (\varepsilon \tilde{q}, \omega) \right|_{\tilde{q} = \lambda_{10} (p_j^0 +
    \varepsilon r_1 \omega) / \varepsilon}^{\lambda_{10} (p_j^0 + r_2 \omega)
    / \varepsilon} d \tilde{q} d \omega\\
    & - & \frac{1}{\mathi t} \int_{\mathbb{S }^2_{W_j}} \int_{\lambda_{10}
    (p_j^0 + \varepsilon r_1 \omega) / \varepsilon}^{\lambda_{10} (p_j^0 + r_2
    \omega) / \varepsilon} e^{\mathi \tilde{q} t} \frac{\partial}{\partial
    \tilde{q}} \left( (\varepsilon^l F^{\varepsilon} (t, p_j^0 + r_j
    (\varepsilon \tilde{q}, \omega) \omega)) \left( \frac{r_j (\varepsilon
    \tilde{q}, \omega)}{\varepsilon} \right)^2 \frac{\partial}{\partial q} r_j
    (\varepsilon \tilde{q}, \omega) \right) d \tilde{q} d \omega .
  \end{eqnarray*}
  For the first term, we can pass to the limit and get
  \begin{eqnarray*}
    &  & \lim_{\varepsilon \rightarrow 0 +} \frac{1}{\mathi t}
    \int_{\mathbb{S }^2_{W_j}} e^{\mathi \tilde{q} t} (\varepsilon^l
    F^{\varepsilon} (t, p_j^0 + r_j (\varepsilon \tilde{q}, \omega) \omega))
    \left. \left( \frac{r_j (\varepsilon \tilde{q}, \omega)}{\varepsilon}
    \right)^2 \frac{\partial}{\partial q} r_j (\varepsilon \tilde{q}, \omega)
    \right|_{\tilde{q} = \lambda_{10} (p_j^0 + \varepsilon r_1 \omega) /
    \varepsilon}^{\lambda_{10} (p_j^0 + r_2 \omega) / \varepsilon} d \tilde{q}
    d \omega\\
    & = & \frac{1}{\mathi t} \int_{\mathbb{S }^2_{W_j}} e^{\mathi \tilde{q}
    t} (\varepsilon^l F^{\varepsilon} (t, p_j^0 + r_j (\varepsilon \tilde{q},
    \omega) \omega)) | \tilde{q} |^2 \nosymbol {\bigverbar_{\tilde{q} =
    r_1}^{+ \infty}}  d \tilde{q} d \omega
  \end{eqnarray*}
  For the second term, we have
  \begin{eqnarray*}
    &  & \int_{\mathbb{S }^2_{W_j}} \int_{\lambda_{10} (p_j^0 + \varepsilon
    r_1 \omega) / \varepsilon}^{\lambda_{10} (p_j^0 + r_2 \omega) /
    \varepsilon} e^{\mathi \tilde{q} t} \frac{\partial}{\partial \tilde{q}}
    \left( (\varepsilon^l F^{\varepsilon} (t, p_j^0 + r_j (\varepsilon
    \tilde{q}, \omega) \omega)) \left( \frac{r_j (\varepsilon \tilde{q},
    \omega)}{\varepsilon} \right)^2 \frac{\partial}{\partial q} r_j
    (\varepsilon \tilde{q}, \omega) \right) d \tilde{q} d \omega\\
    & = & \int_{\mathbb{S }^2_{W_j}} \int_{\lambda_{10} (p_j^0 + \varepsilon
    r_1 \omega) / \varepsilon}^{\lambda_{10} (p_j^0 + r_2 \omega) /
    \varepsilon} e^{\mathi \tilde{q} t} \varepsilon^l \omega \cdummy \nabla_p
    F^{\varepsilon} (t, p_j^0 + r_j (\varepsilon \tilde{q}, \omega) \omega)
    \left( \frac{r_j (\varepsilon \tilde{q}, \omega)}{\varepsilon} \right)^2
    \left( \frac{\partial}{\partial q} r_j (\varepsilon \tilde{q}, \omega)
    \right)^2 d \tilde{q} d \omega\\
    & + & 2 \int_{\mathbb{S }^2_{W_j}} \int_{\lambda_{10} (p_j^0 +
    \varepsilon r_1 \omega) / \varepsilon}^{\lambda_{10} (p_j^0 + r_2 \omega)
    / \varepsilon} e^{\mathi \tilde{q} t} \varepsilon^l F^{\varepsilon} (t,
    p_j^0 + r_j (\varepsilon \tilde{q}, \omega) \omega) \left( \frac{r_j
    (\varepsilon \tilde{q}, \omega)}{\varepsilon} \right) \left(
    \frac{\partial}{\partial q} r_j (\varepsilon \tilde{q}, \omega) \right)^2
    d \tilde{q} d \omega\\
    & + & \int_{\mathbb{S }^2_{W_j}} \int_{\lambda_{10} (p_j^0 + \varepsilon
    r_1 \omega) / \varepsilon}^{\lambda_{10} (p_j^0 + r_2 \omega) /
    \varepsilon} e^{\mathi \tilde{q} t} \varepsilon^l F^{\varepsilon} (t,
    p_j^0 + r_j (\varepsilon \tilde{q}, \omega) \omega) \left( \frac{r_j
    (\varepsilon \tilde{q}, \omega)}{\varepsilon} \right) r_j (\varepsilon
    \tilde{q}, \omega) \frac{\partial^2}{\partial q^2} r_j (\varepsilon
    \tilde{q}, \omega) d \tilde{q} d \omega .
  \end{eqnarray*}
  By \eqref{cond2lem} and the estimate $| r_j (\varepsilon \tilde{q}, \omega)
  | \leqslant \varepsilon \tilde{q}$, we can apply the dominated convergence
  theorem to all three integrals and get
  \begin{eqnarray*}
    &  & \lim_{\varepsilon \rightarrow 0 +} \int_{\mathbb{S }^2_{W_j}}
    \int_{\lambda_{10} (p_j^0 + \varepsilon r_1 \omega) /
    \varepsilon}^{\lambda_{10} (p_j^0 + r_2 \omega) / \varepsilon} e^{\mathi
    \tilde{q} t} \frac{\partial}{\partial \tilde{q}} \left( (\varepsilon^l
    F^{\varepsilon} (t, p_j^0 + r_j (\varepsilon \tilde{q}, \omega) \omega))
    \left( \frac{r_j (\varepsilon \tilde{q}, \omega)}{\varepsilon} \right)^2
    \frac{\partial}{\partial q} r_j (\varepsilon \tilde{q}, \omega) \right) d
    \tilde{q} d \omega\\
    & = & \int_{\mathbb{S }^2_{W_j}} \int_{r_1}^{+ \infty} e^{\mathi
    \tilde{q} t} \omega \cdummy \nabla_p F (t, \tilde{q} \omega) | \tilde{q}
    |^2 d \tilde{q} d \omega + 2 \int_{\mathbb{S }^2_{W_j}} \int_{r_1}^{+
    \infty} e^{\mathi \tilde{q} t} F (t, \tilde{q} \omega) \tilde{q} d
    \tilde{q} d \omega
  \end{eqnarray*}
  Then we get
  \begin{eqnarray*}
    &  & \lim_{\varepsilon \rightarrow 0 +} \varepsilon^{l - 3} \int_{B
    (p_j^0, r_2) \backslash B (p_j^0, \varepsilon r_1)}
    e^{\frac{\mathi}{\varepsilon} \lambda_{10} (p) t} F^{\varepsilon} (t, p)
    dp\\
    & = & - \frac{1}{\mathi t} \left( \int_{\mathbb{S }^2_{W_j}}
    \int_{r_1}^{+ \infty} e^{\mathi \tilde{q} t} \omega \cdummy \nabla_p F (t,
    \tilde{q} \omega) | \tilde{q} |^2 d \tilde{q} d \omega + 2 \int_{\mathbb{S
    }^2_{W_j}} \int_{r_1}^{+ \infty} e^{\mathi \tilde{q} t} F (t, \tilde{q}
    \omega) \tilde{q} d \tilde{q} d \omega \right)\\
    & + & \frac{1}{\mathi t} \int_{\mathbb{S }^2_{W_j}} e^{\mathi \tilde{q}
    t} (\varepsilon^l F^{\varepsilon} (t, p_j^0 + r_j (\varepsilon \tilde{q},
    \omega) \omega)) | \tilde{q} |^2 \nosymbol {\bigverbar_{\tilde{q} =
    r_1}^{+ \infty}}  d \tilde{q} d \omega\\
    & = & \int_{\mathbb{R}^3 \backslash B (0, r_1)} e^{\mathi | p | t} F (t,
    p) dp
  \end{eqnarray*}
  Similarly, if $l = 2$, we need to integrate by parts on more time and the
  conclusion is straightforward.
\end{proof}

\section{Effective Hamiltonian}

\subsection{Proof of Lemma \ref{eh lemma} and Corollary \ref{eh
cor}}\label{Weyl nodes}

Proving Lemma \ref{eh lemma} boils down to identifying equations that determine the eigenvalues and
eigenvectors. Recall $\Omega_0$ defined by \eqref{Omega0}, the projection
operator onto $\Omega_0$ is given by
\[ P_0 f = \langle f, \phi_1 \rangle \phi_1 + \langle f, \phi_2 \rangle \phi_2
   . \]
For a small perturbation $p$, the eigenvalue problem at the point $p_0 + p$ is
given by
\begin{equation}
  H^0  (p_0 + p) \Psi_n (p_0 + p) = \lambda_n (p_0 + p) \Psi_n (p_0 + p) .
\end{equation}
Let
\begin{align*}
      \lambda_n (p_0 + p)  = \lambda_n (p_0) + \lambda_- (p),& \, \lambda_{n + 1} (p_0 + p) =  \lambda_n (p_0) + \lambda_+ (p),\\
  \Psi_- (p) = \Psi_n (p_0 + p),&\, \Psi_+ (p) = \Psi_{n + 1} (p_0 + p).
\end{align*}
Then the eigenvalue problem can be rewritten as
\begin{equation}
  \label{eigen p} H^0  (p_0 + p) \Psi_{\pm} (p) = (\lambda_n + \lambda_{\pm}
  (p)) \Psi_{\pm} (p) .
\end{equation}
Here, it is obvious that $\lambda_+ \geqslant \lambda_-$. Let $\Omega (p)$ denote
the eigenspace spanned by $\Psi_{\pm} (p)$.

Now we will  calculate $\Psi_{\pm} (p)$ and $\lambda_{\pm} (p)$. Suppose that
\begin{equation}
  P_0 \Psi_{\pm} = \alpha_{\pm} \phi_1 + \beta_{\pm} \phi_2 .
\end{equation}
Using the method developed in {\cite{fefferman_honeycomb_2012}}, \eqref{eigen p} can be reformulated as
\begin{equation}
  \label{M equation} M (\lambda_{\pm}, p) \cdot (\alpha_{\pm}, \beta_{\pm})^T
  = 0,
\end{equation}
Here, $M (\lambda_{\pm}, p)$ is given by


\begin{equation}
  M (\lambda_{\pm}, p) = \left( \begin{array}{cc}
    \lambda_{\pm} + \langle \phi_1, 2 i p \cdot \nabla \phi_1 \rangle &
    \langle \phi_1, 2 ip \cdot \nabla \phi_2 \rangle\\
    \langle \phi_2, 2 ip \cdot \nabla \phi_1 \rangle & \lambda_{\pm} + \langle
    \phi_2, 2 ip \cdot \nabla \phi_2 \rangle
  \end{array} \right) + O (p^2) \label{M expression}
\end{equation}
By the Fredholm alternative theorem, we need the linear equation \eqref{M
equation} to have nontrivial solutions. Therefore, $\lambda_{\pm} (p)$ should
be the two roots of the following equation
\begin{equation}
  \label{det M} \det M (\lambda_{\pm}, p) = 0.
\end{equation}
$\alpha_{\pm} (p)$ and $\beta_{\pm} (p)$ are the solution of the following
equation,
\begin{equation}
  \label{M vec} M (\lambda_{\pm}, p) \cdot (\alpha_{\pm}, \beta_{\pm})^T = 0.
\end{equation}
Here, we said that \eqref{det M} has two solutions because it can be seen as a
quadratic equation with a small perturbation. Here we assume that
\begin{equation}
  \left( \begin{array}{cc}
    \langle \phi_1, 2 ip \cdot \nabla \phi_1 \rangle & \langle \phi_1, 2 ip
    \cdot \nabla \phi_2 \rangle\\
    \langle \phi_2, 2 ip \cdot \nabla \phi_1 \rangle & \langle \phi_2, 2 ip
    \cdot \nabla \phi_2 \rangle
  \end{array} \right) \neq c I. \label{split}
\end{equation}
then the rank of $M (\lambda_{\pm}, p)$ is 1 and $\lambda_+ \neq \lambda_-$.
In fact, the leading term of $M (\lambda_{\pm}, p)$ is given by
\begin{equation}
  M (\lambda_{\pm}, p) = \lambda_{\pm} I - H^e (p) + o (p) .
\end{equation}
Here, $H^e (p)$ the effective Hamiltonian defined by \eqref{eff ham0}(we will
introduce it after Lemma \ref{eh lemma}). Using another expression of $H^e
(p)$ given by \eqref{eff ham}, we can easily calculate the asymptotic
expansion of $\lambda_{\pm} (p)$, which is exactly given by \eqref{lambda
asym}. This finishes the proof of Lemma \ref{eh lemma}.

For the corollary \ref{eh cor}, thanks to \eqref{M expression}, it is obvious
that $\alpha_{\pm}, \beta_{\pm}$ can be approximated by $\alpha_{\pm}^0, \beta_{\pm}^0$,
which solves
\[ \left( \begin{array}{cc}
     \lambda_{\pm} + \langle \phi_1, 2 \mathi p \cdot \nabla \phi_1 \rangle &
     \langle \phi_1, 2 \mathi p \cdot \nabla \phi_2 \rangle\\
     \langle \phi_2, 2 \mathi p \cdot \nabla \phi_1 \rangle & \lambda_{\pm} +
     \langle \phi_2, 2 \mathi p \cdot \nabla \phi_2 \rangle
   \end{array} \right) \left(\begin{array}{c}
     \alpha_{\pm}^0\\
     \beta_{\pm}^0
   \end{array}\right) = 0. \]
\eqref{eq: eig vec asym} follows from straightforward calculations. This finishes the proof of Corollary \ref{eh cor}.

\subsection{Proof of Proposition \ref{eff result}}\label{2.3}

In this part, we will derive some useful identities and prove Proposition \ref{eff result}. Let
\begin{equation}
  \label{linear transformation} p':= (p'_1, p'_2, p'_3)^T  = Wp,
\end{equation}
Under the change of variable \eqref{linear transformation}, $H^e (p)$ given by
\eqref{eff ham} becomes
\begin{equation}
  \label{Effective H} H^e (p') = \frac{1}{2} \left[ \begin{array}{cc}
    p'_3 & p'_1 + ip'_2\\
    p'_1 - ip'_2 & - p'_3
  \end{array} \right] .
\end{equation}
Here we assume that $w_0 = 0$ without loss of generality. For the general
case, we only need an additional coordinate transformation.

First, we will list the terms we need and then we will calculate them respectively.
We will find an $A_{+ -}$
to approximate $A_{10}$ and $D_{+ -}$ to approximate $D_{10}$ (similarly for
other $A_{m n}$ and $D_{m n}$ with $m, n \in \{ 0, 1 \}$). 
Since only the difference of $A_{11}$ and
$A_{00}$ and that of $D_{11}$ and $D_{00}$ are needed, we use
\begin{equation}
  \Lambda_{+ -} = A_{+ +} - A_{- -}, \quad \Delta_{+ -} = D_{+ +} - D_{- -} . \label{Lambda def}
\end{equation}
to approximate $A_{11} - A_{00}$ and $D_{11} - D_{00}$ respectively. 
Besides these terms, the energy gap $\lambda_+ -
\lambda_-$ and its gradient are also important. We will also introduce the
berry curvature $\Omega_{+ -}$, which will be defined later. 

With effective Hamiltonian, $A_{nn}$ can be written as


\begin{equation}
  \begin{array}{ll}
    A_{+ +} (p) & = \langle \Psi^0_+, i \nabla_p \Psi^0_+ \rangle + O (p) .
  \end{array} \label{App}
\end{equation}
The expressions of $A_{- -}$ and $D_{+ -}$ are similar,
\begin{eqnarray}
  A_{- -} (p) & = & \langle \Psi^0_-, i \nabla_p \Psi^0_- \rangle + O (p), 
  \label{Amm}\\
  D_{+ -} (p) & = & \langle \Psi^0_+, \nabla_p H^e (p) \Psi^0_- \rangle + O
  (p) .  \label{Dpm}
\end{eqnarray}
Recalling that for each Weyl node, we have a matrix $W$ given by \eqref{W
def}. First we consider the case $W = I$. In this case, $H^e$ is given by
\eqref{Effective H}. Its eigenvalue is

\begin{align}
  \lambda_{\pm} (p) & = \pm \frac{r}{2},  \label{lpm asym}
\end{align}
with the sphere coordinate $(r, \phi, \theta)$. 
Applying the expression of the eigenvectors to \eqref{App}, \eqref{Amm} and
\eqref{Dpm}, straightforward calculation shows that
\begin{eqnarray}
  \label{Dpm asym} D_{+ -} & = & \frac{1}{2} \left( - \cos^2  \frac{\theta}{2} + \exp (- 2 i
  \phi) \sin^2  \frac{\theta}{2}, \right.\\
  & &\left. i \cos^2  \frac{\theta}{2} + i \exp (- 2 i
  \phi) \sin^2  \frac{\theta}{2}, \exp (- i \phi) \sin \theta \right)^T .\nonumber 
  \\
  \Lambda_{+ -} & = & \frac{2}{r}  (\tan \frac{\theta}{2} \sin \phi, - \tan
  \frac{\theta}{2} \cos \phi, 0) .  \label{pm asym}
\end{eqnarray}
Thanks to \eqref{grad H}, $A_{+ -}$ is given by
\begin{equation}
  \begin{split}
    A_{+ -} & = \frac{D_{+ -}}{i (\lambda_+ - \lambda_-)} = \frac{D_{+ -}}{ir}
    \label{A asymptotic} .
  \end{split}
\end{equation}
For $\Delta_{+ -}$ defined by \eqref{Lambda def}, we have
\begin{equation}
  \label{dlambda asymptotic st} \Delta_{+ -} = \nabla_p  (\lambda_+ -
  \lambda_-) = \frac{p}{r} .
\end{equation}
Currently, we have calculated $A_{+ -}, \Lambda_{+ -}, D_{+ -}, \Delta_{+
-}$,the energy gap $\lambda_+ - \lambda_-$ and its gradient. Next we calculate
Berry curvature. The Berry curvature, which is related to the Berry
connection, is defined by
\begin{equation}
  \label{omega def} \Omega_n = \nabla_p \times A_{nn},
\end{equation}
or
\begin{equation}
      \Omega^i_n = i \sum_{m \neq n} A_{nm} \times A_{mn}.
\end{equation}
The two definitions above are equivalent.





Now we focus on $\Omega^i_n$ around Weyl nodes with $m = n + 1$. For the
effective Hamiltonian, we can approximate the Berry curvature by defining 
\begin{equation}
  \Omega_+ = - \Omega_- = iA_{+ -} \times A_{- +} . \label{Omega def}
\end{equation}
Then Straightforward calculation gives the Berry curvature as
\begin{equation}
  \label{omega asymptotic} \Omega^i_+ (p) = \frac{p^i}{2 r^3} .
\end{equation}
The expression of $\Omega^i_+ (p)$ is the same as the result in
{\cite{berry_quantal_1984}}, which can be seen as a justification of our
calculation. Then we add some additional important conclusions. Let
$\delta^{ijk}$ denote $\delta^{ijk}_{123}$, where $\delta$ is the Kronecker
delta symbol. Noticing that $D_{+ -}$ is independent of $r$, straightforward
calculation shows that
\begin{equation}
  \label{delta jkl st} \int_{\mathbb{S}^2} \omega^j D^k_{+ -} (r \omega)
  D^l_{- +} (r \omega) d \omega = \frac{i \pi}{3} \delta^{jkl}, \forall r > 0,
\end{equation}
Besides, we can also show that
\begin{eqnarray}
  \int_{\mathbb{S}^2} D^k_{+ -} (r \omega) D^l_{- +} (r \omega) d \omega & = &
  \frac{2 \pi}{3} \delta^{kl}, \forall r > 0,  \label{delta kl st}\\
  \int_{\mathbb{S}^2} D^k_{- -} (r \omega) d \omega & = & 0, \forall r > 0. 
  \label{D-st}
\end{eqnarray}
These are the most important relations for our following calculations.
Currently, we have calculated $A_{+ -}, \Lambda_{+ -}$, $D_{+ -}, \Delta_{+ -}$,
the energy gap $\lambda_+ - \lambda_-$ and its gradient, the Berry curvature,
$\partial_{p^j} D_{+ -}^k$ and some important conclusions \ \eqref{delta jkl
st} to \eqref{D-st}.

Above we discussed the case where $W = I$. If $W \neq I$, we need to define
$B_W (p_0, r)$ as
\begin{equation}
  B_W (p_0, r) = \{ p : | W (p - p_0) | < r \}, \label{Bw def}
\end{equation}
And $\mathbb{S }^2_W$ as
\begin{equation}
  \mathbb{S }^2_W = \partial B_W  (0, 1) .
\end{equation}
After change of variable $p' = W (p - p_0)$, it returns to the case where $W =
I$. Therefore, it follows that the new ``$A_{+ -}$'' is given by
\begin{equation}
  \tilde{A}_{+ -} (p) = W^{- T} A_{+ -} \left( {W }  p \right) . \label{rule1}
\end{equation}
Here, we used the chain rule. New ``$D_{m n}$''s and ``$A_{m n}$''s with $m, n
\in \{ +, - \}$ are given in the similar way. New ``$\lambda_{\pm}$'' is given
by
\begin{equation}
  \tilde{\lambda}_{\pm} (p) = \lambda_{\pm} (W  p) . \label{rule2}
\end{equation}
Then straight forward calculation gives \eqref{gap} to \eqref{delta kl} and
finishes the proof of Proposition \ref{eff result}.

\

\section{Proof of Proposition \ref{c result}}\label{pf c result}

In this part, we will prove Proposition \ref{c result}. The result consists of
two parts. In the first part, we will calculate the asymptotic expansion of
$c$. In the second part, we will show the regularity of the remainder.

\subsection{Calculation of asymptotic analysis for c }\label{A.1}

We will first calculate the expansion of $d_{m n}^k$ given by
\eqref{dkmn asym}, and derive the expression of $c_{m n}^k$.

\subsubsection{Zeroth- and first-order terms}

For the zeroth-order terms, by \eqref{d0} and \eqref{dkmn asym}, it is
obvious that $d_{m n}^{0, 0} = c_{m n}^{i n}, \,d_{m n}^{0, k} = 0 \,(k \geqslant
1)$ and $\kappa_{m n}^0 = 0$.

For the first-order terms, we first consider $d^1_{n n}$. By \eqref{ckmn},
$d_{n n}^1$ is given by
\[ d_{n n}^1 = - \mathi \lambda_{10} \int^t_0 E (s) \cdummy \sum_p
   (\breve{A}_{n p} d^0_{pn} - d^0_{np}  \breve{A}_{pn}) (s, p) d s \text{},
\]
Since only $d^0_{00}$ does not vanish, this integral is zero. Then we can
obtain that
\begin{equation}
  d^1_{nn} = 0.
\end{equation}
For $d^1_{m n} (m \neq n)$, by $\eqref{ckmn}$, it is given by
\begin{equation}
  d^1_{m n} = - \mathi \lambda_{10} T_{m n} (t, p) \int^t_0 T_{n m} (s, p) E
  (s) \cdummy \sum_p (\breve{A}_{mp} d^0_{pn} - d^0_{mp}  \breve{A}_{pn}) (s,
  p) d s. \label{natural eq}
\end{equation}
Then we calculate its asymptotic expansion. Let
\begin{eqnarray*}
  F^1_{m n} (s, p) & = & \lambda_{10} E (s) \cdummy \sum_p (\breve{A}_{mp}
  d_{pn}^0 - d_{mp}^0  \breve{A}_{pn}) (s, p) .
\end{eqnarray*}
Noticing that all $d^0_{00}$ is not zero, $F_{m n}$ is given by
\[ F^1_{m n} (s, p) = - \lambda_{10} f_{mn} E (s) \cdummy \breve{A}_{mn}  (t,
   p), \]
with
\begin{eqnarray}
  f_{mn} & = & (\delta_{m 0} - \delta_{n 0}) c_{00}^{i n} . \label{f def} 
\end{eqnarray}
Then by \eqref{smooth}, $\| F_{m n}^1 \|_{W^{4, \infty}}$ is bounded uniformly
with respect to $\varepsilon$. Notice that $f_{m n}$ do not vanish only when
$m = 0, n > 0$ or $n = 0, m > 0$. Thus $d_{m n}^1 \neq 0$ only if $m = 0, n >
0$ or $n = 0, m > 0$\tmtextbf{}. Applying Lemma \ref{A2}, we get
\begin{eqnarray}
  d^1_{m n} & = & - \mathi T_{m n} (t, p) \sum_{s = 0}^1 \tilde{P}_{m n}^s
  [F^1_{m n}] (r, p) |_{r = t}^0  (\mathi \varepsilon)^{s + 1} - \mathi T_{m
  n} (t, p) \kappa_{m n}^3 [F^1_{m n}] (\varepsilon) . \label{intmn} 
\end{eqnarray}
$\tilde{P}_{m n}^s \infixand \kappa_{m n}^3$ are defined in Lemma \ref{A2}.
The error term $\kappa_{m n}^3 [f] (\varepsilon)$ is $O (\varepsilon^3 /
\lambda_{10} (p)^3)$ and we can discard it. $\tilde{P}_{m n}^0 [F^1_{m n}] (r,
p) |_{r = t}^0$ is $O (1)$ and given by
\[ \tilde{P}_{m n}^0 [F^1_{m n}] (r, p) |_{r = t}^0 = \lambda_{10} T_{n m} (t,
   p) E (t) \cdummy \breve{X}_{m n}  (t, p) - \lambda_{10} E (0) \cdummy X_{m
   n}  (p), \]
with
\begin{equation}
  X_{m n} (p) = \frac{f_{mn} A_{mn}  (p)}{\lambda_{n m} (p)}, \label{X def}
\end{equation}
and $\breve{X}_{m n}$ defined by \eqref{X def} and \eqref{alt u}. Notice that
$F_{m n}^1, X_{m n}$ do not vanish only when $m = 0, n > 0$ or $n = 0, m > 0$
because they contain $f_{m n}$. $\tilde{P}_{m n}^1 [F^1_{m n}] (r, p) |_{r =
t}^0$ contains high order terms. In fact, we have
\begin{eqnarray*}
  &  &  \tilde{P}_{m n}^1 [F^1_{m n}] (r, p) |_{r = t}^0  (\mathi
  \varepsilon)^2\\
  & = & \varepsilon^2  \frac{\partial_t F^1_{m n} (0, p) \lambda_{n m} (p) -
  \varepsilon E (t) \cdot \nabla_p \lambda_{n m} (p) F^1_{m n} (0,
  p)}{\lambda_{n m} (p)^3}\\
  & - & \varepsilon^2  \frac{\partial_t F^1_{m n} (t, p)  \breve{\lambda}_{n
  m} (t, p) - \varepsilon E (t) \cdot \nabla_p  \breve{\lambda}_{n m} (t, p)
  F^1_{m n} (t, p)}{\breve{\lambda}_{n m} (t, p)^3}\\
  & = & \varepsilon^2 T_{m n} (t, p) \frac{f_{mn} E' (t) \cdummy
  \breve{A}_{mn}  (t, p) \breve{\lambda}_{n m} (t, p)}{\breve{\lambda}_{n m}
  (t, p)^3} \\
  &- & \varepsilon^2 \lambda_{10} \frac{f_{mn} E' (0) \cdummy A_{mn} 
  (p) \lambda_{n m} (p)}{\lambda_{n m} (p)^3} + d^{1, r}_{m n},
\end{eqnarray*}
with
\begin{eqnarray*}
  d^{1, r}_{m n} & = & \varepsilon^2 T_{m n} (t, p) \frac{\varepsilon E (t)
  \cdot \nabla_p  \breve{\lambda}_{n m} (t, p) F^1_{m n} (t,
  p)}{\breve{\lambda}_{n m} (t, p)^3}\\
  & + & \varepsilon^2 \lambda_{10} T_{m n} (t, p) \frac{f_{mn} (\varepsilon E
  (t) \cdot \nabla_p) (E (t) \cdummy \breve{A}_{mn}  (t, p)) 
  \breve{\lambda}_{n m} (t, p)}{\breve{\lambda}_{n m} (t, p)^3}\\
  & - & \frac{\varepsilon^2 \lambda_{10} f_{mn} (\varepsilon E (0) \cdot
  \nabla_p) (E (0) \cdummy A_{mn}  (p)) \lambda_{n m} (p)}{\lambda_{n m}
  (p)^3} - \frac{\varepsilon^3 E (0) \cdot \nabla_p \lambda_{n m} (p) F^1_{m
  n} (0, p)}{\lambda_{n m} (p)^3}\\
  & = & O \left( \left( \frac{\varepsilon}{\lambda_{10}} \right)^3 \right) .
\end{eqnarray*}
$d^{1, r}_{m n}$ is $O ((\varepsilon / \lambda_{10})^3)$ and thus can be
discarded. Then recalling \eqref{dkmn asym}, the asymptotic expansion of $d_{m
n}^1 (m \neq n)$ is given by
\begin{eqnarray}
  d_{m n}^{1, 0} & = & 0, \nonumber\\
  d_{m n}^{1, 1} & = &  (\lambda_{10})^2 T_{m n} (t, p) E (0) \cdummy X_{m n} 
  (p) - (\lambda_{10})^2 E (t) \cdummy \breve{X}_{m n}  (t, p), \label{d11mn}
  \\
  d_{m n}^{1, 2} & = & \mathi (\lambda_{10})^3 T_{m n} (t, p) \frac{f_{mn} E'
  (0) \cdummy A_{mn}  (p) \lambda_{n m} (p)}{\lambda_{n m} (p)^3}\nonumber \\
  &-& \mathi
  (\lambda_{10})^3 \frac{f_{mn} E' (t) \cdummy \breve{A}_{mn}  (t, p)
  \breve{\lambda}_{n m} (t, p)}{\breve{\lambda}_{n m} (t, p)^3} . \nonumber
\end{eqnarray}
Finally, we can summary that
\begin{equation}
  d_{m n}^1 = d_{m n}^{1, 0} + \left( \frac{\varepsilon}{\lambda_{10}} \right)
  d_{m n}^{1, 1} + \left( \frac{\varepsilon}{\lambda_{10}} \right)^2 d_{m
  n}^{1, 2} + \kappa_{m n}^1, \label{d1mn asym}
\end{equation}
with
\[ \kappa_{m n}^1 = - (\varepsilon)^2 \mathi \lambda_{10} T_{m n} (t, p) d^{1,
   r}_{m n} - \mathi \lambda_{10} T_{m n} (t, p) \kappa_{m n}^3 [F_{m n}]
   (\varepsilon) = O ((\varepsilon / \lambda_{10})^3) \]
denoting the remainder.

\subsubsection{Second-order terms}

Now we can turn to the second-order terms. By \eqref{ckmn}, $d_{m n}^2$ is
given by
\[ d^2_{m n} = - \mathi \lambda_{10} T_{m n} (t, p) \int^t_0 T_{n m} (s, p) E
   (s) \cdummy \sum_p (\breve{A}_{mp} d^1_{pn} - d^1_{mp}  \breve{A}_{pn}) (s,
   p) d s \]
Substituting \eqref{d1mn asym} into the expression of $d^2_{m n}$ and noticing
that $\begin{array}{l}
  d_{m n}^{1, 0} = 0
\end{array}$, we will get
\[ d^2_{m n} = - \mathi \varepsilon T_{m n} (t, p) \int^t_0 T_{n m} (s, p) E
   (s) \cdummy \sum_p (\breve{A}_{mp} d^{1, 1}_{pn} - d^{1, 1}_{mp} 
   \breve{A}_{pn}) (s, p) d s + d_{m n}^{2, l}, \]
with
\begin{eqnarray*}
  d_{m n}^{2, l} & = & - \mathi \lambda_{10} T_{m n} (t, p) \int^t_0 T_{n m}
  (s, p) E (s) \cdummy \sum_p \left( \breve{A}_{mp} \left(  \left(
  \frac{\varepsilon}{\lambda_{10}} \right)^2 d_{p n}^{1, 2} + \kappa_{p n}^1
  \right) \right.\\
  & - & \left. \left(  \left( \frac{\varepsilon}{\lambda_{10}} \right)^2 d_{m
  p}^{1, 2} + \kappa_{m p}^1 \right)  \breve{A}_{pn} \right) (s, p) d s.
\end{eqnarray*}
$d_{m n}^{2, l} = O ((\varepsilon / \lambda_{10})^2)$ should be discarded.
Substituting \eqref{d11mn}, the expression of $d^{1, 1}_{pn}$, into the first
term of $d_{m n}^2$, we get
\begin{align*}
  & - \mathi \varepsilon T_{m n} (t, p) \int^t_0 T_{n m} (s, p) E (s) \cdummy
  \sum_p (\breve{A}_{mp} d^{1, 1}_{pn} - d^{1, 1}_{mp}  \breve{A}_{pn}) (s, p)
  d s\\
  = & \mathi \varepsilon (\lambda_{10})^2 T_{m n} (t, p) \int^t_0 T_{n m} (s,
  p)  \sum_p  (T_{m p} (s, p) E (0) \cdummy X_{m p}  (p) - E (s) \cdummy
  \breve{X}_{m p}  (s, p)) E (s) \cdummy \breve{A}_{pn} (s, p) d s\\
  - & \mathi \varepsilon (\lambda_{10})^2 T_{m n} (t, p) \int^t_0 T_{n m} (s,
  p)  \sum_p E (s) \cdummy \breve{A}_{mp}  (s, p) (T_{p n} (t, p) E (0)
  \cdummy X_{p n}  (p) - E (s) \cdummy \breve{X}_{p n}  (s, p)) d s\\
  = & \mathi \varepsilon (\lambda_{10})^2 T_{m n} (t, p) \int^t_0  \sum_p
  (\nobracket E (s) \cdummy (T_{n p} \breve{A}_{pn}) (s, p) E (0) \cdummy X_{m
  p} (p)\\
  - & E (s) \cdummy (T_{p m} \breve{A}_{mp}) (s, p) E (0) \cdummy X_{p n} (p)
  \nobracket) d s\\
  - & \mathi \varepsilon (\lambda_{10})^2 T_{m n} (t, p) \int^t_0 T_{n m} (s,
  p)  \sum_p  (E (s) \cdummy \breve{A}_{pn} (s, p) E (s) \cdummy \breve{X}_{m
  p}  (s, p) \nobracket\\
  - & E (s) \cdummy \breve{A}_{mp}  (s, p) E (s) \cdummy \breve{X}_{p n}  (s,
  p)  \nobracket) d s.
\end{align*}
Let
\[ F^2_{m n} (t, p) = (\lambda_{10})^3 \sum_p  (E (s) \cdummy \breve{A}_{pn}
   (s, p) E (t) \cdummy \breve{X}_{m p}  (s, p) - E (s) \cdummy \breve{A}_{mp}
   (s, p) E (s) \cdummy \breve{X}_{p n}  (s, p)) . \]
Then by \eqref{smooth}, $\| F^2_{m n} \|_{W^{2, \infty}}$ is bounded. When $m
\neq n$, again we use Lemma \ref{A2} and get
\begin{align*}
  & \varepsilon (\lambda_{10})^2 T_{m n} (t, p) \int^t_0 T_{n m} (s, p) 
  \sum_p (\nobracket E (s) \cdummy \breve{A}_{pn} (s, p) E (s) \cdummy
  \breve{X}_{m p}  (s, p)\\
  - & E (s) \cdummy \breve{A}_{mp}  (s, p) E (s) \cdummy \breve{X}_{p n}  (s,
  p) \nobracket) d s\\
  = &  (\varepsilon / \lambda_{10}) T_{m n} (t, p) \int^t_0 T_{n m} (s, p)
  F^2_{m n} (t, p) d s\\
  = & (\varepsilon / \lambda_{10}) T_{m n} (t, p) \kappa_{m n}^1 [F^2_{m n}]
  (\varepsilon)\\
  = & O \left( \left( \varepsilon / \lambda_{10} \right)^2  \right),
\end{align*}
is discarded. When $m = n$,
\[ \int^t_0 E (s) \cdummy \sum_p  (E (t) \cdummy \breve{X}_{n p}  (s, p)
   \breve{A}_{pn} (s, p) - \breve{A}_{n p}  (s, p) E (s) \cdummy \breve{X}_{p
   n}  (s, p)) = 0. \]
Similarly, let
\begin{eqnarray}
  F_{m p n}^3 (t, p) & = & (\lambda_{10})^3 E (t) \cdummy \breve{A}_{p n}  (t,
  p) E (0) \cdummy X_{m p}  (p), \label{F3 def} \\
  F_{m p n}^4 (t, p) & = & (\lambda_{10})^3 E (t) \cdummy \breve{A}_{mp}  (t,
  p) E (0) \cdummy X_{p n}  (p) . \label{F4 def} 
\end{eqnarray}
Then we can obtain,
\begin{eqnarray*}
  &  & \varepsilon (\lambda_{10})^2 T_{m n} (t, p) \int^t_0 E (s) \cdummy
  (T_{n p} \breve{A}_{pn}) (s, p) E (0) \cdummy X_{m p}  (p) d s\\
  & = &  (\varepsilon / \lambda_{10}) T_{m n} (t, p) \int^t_0  (T_{p m} F_{m
  p n}^3)  (s, p) d s\\
  & = & \left\{\begin{array}{l}
    O (\varepsilon / \lambda_{10}), n = p\\
    O \left( \left( \varepsilon / \lambda_{10} \right)^2  \right), n \neq p
  \end{array}\right.
\end{eqnarray*}
\[ \begin{array}{ll}
     & \varepsilon (\lambda_{10})^2 T_{m n} (t, p) \int^t_0 E (s) \cdummy
     (T_{p m} \breve{A}_{mp})  (s, p) E (0) \cdummy X_{p n}  (p) d s\\
     = &  (\varepsilon / \lambda_{10}) T_{m n} (t, p) \int^t_0  (T_{n p} F_{m
     p n}^4)  (s, p) d s\\
     = & \left\{\begin{array}{l}
       O (\varepsilon / \lambda_{10}), m = p\\
       O \left( \left( \varepsilon / \lambda_{10} \right)^2  \right), m \neq
       p
     \end{array}\right.
   \end{array} \]
Thus, we can conclude that
\begin{equation}
  d_{m n}^2 = d_{m n}^{2, 0} + \left( \frac{\varepsilon}{\lambda_{10}} \right)
  d_{m n}^{2, 1} + \kappa_{m n}^2, \label{d2mn asym}
\end{equation}
with
\begin{eqnarray*}
  d_{m n}^{2, 0} & = & 0,\\
  d_{m n}^{2, 1} & = & \mathi T_{m n} (t, p) \int^t_0  (F_{m n n}^3 (s, p) -
  F_{m m n}^4 (s, p)) d s,\\
  & = & \mathi T_{m n} (t, p) \int^t_0  (\lambda_{10})^3 E (s) \cdummy
  (\breve{A}_{n n} - \breve{A}_{m m}) (s, p) E (0) \cdummy X_{m n}  (p) d s,\\
  \kappa_{m n}^2 & = & d_{m n}^{2, l} - \mathi (\varepsilon / \lambda_{10})
  \sum_p \left( I_{m \neq n} T_{m n} (t, p) \int^t_0 T_{n m} (s, p) F^2_{m n}
  (t, p) d s \right)\\
  & + & \mathi (\varepsilon / \lambda_{10}) \sum_p \left( I_{m \neq p} T_{m
  n} (t, p) \int^t_0  (T_{p m} F_{m p n}^3)  (s, p) d s - I_{p \neq n} T_{m n}
  (t, p) \int^t_0  (T_{n p} F_{m p n}^4)  (s, p) d s \right)\\
  & = & O \left( \left( \varepsilon / \lambda_{10} \right)^2  \right) .
\end{eqnarray*}

\subsubsection{Third-order terms and summary}

For the third-order terms, it is easy to see that $d_{m n}^{3, 0} = 0$ because
$d_{m n}^{2, 0} = 0$. Then similar calculation shows that
\[ d_{m n}^3 = \kappa_{m n}^3 = O ((\varepsilon / \lambda_{10})^4) . \]
We have finished the asymptotic expansion of $d^k_{m n}$ and find each term in
\eqref{dkmn asym}. Then we can verify that $c^k_{m n}$ in the expansion
\eqref{repeat c asym} is given by \eqref{c0} to \eqref{c3} with the help of
\eqref{ckmn cal}.

Recall \eqref{repeat c asym},
\[  \breve{c}_{m n} = c_{m n}^0 + \left( \frac{\varepsilon}{\lambda_{10}}
   \right) c_{m n}^1 + \left( \frac{\varepsilon}{\lambda_{10}} \right)^2 c_{m
   n}^2 + \left( \frac{\varepsilon}{\lambda_{10}} \right)^3 c_{m n}^3 + \left(
   \frac{\varepsilon}{\lambda_{10}} \right)^4 \varsigma_{m n}^4, \]
with $\varsigma_{m n}^4$ given by
\begin{equation}
  \varsigma_{m n}^4 = R_{m n} + \kappa_{m n}^0 + \kappa_{m n}^1 + \kappa_{m
  n}^2 + \kappa_{m n}^3 . \label{remainder}
\end{equation}
Notice that in the process of calculation $d_{m n}^k$ by using \eqref{int
exp}, all the remainders $\kappa_{m n}^k (1 \leqslant k \leqslant 3)$ are $O
(\varepsilon^4 / \lambda_{10} (p)^4)$. Then we only need to estimate $R_{m n}$
to finish the proof of Lemma \ref{c result}.

\subsection{Regularity of the remainders}\label{3.5}

Now we will analyse the remainder $R_{m n}$. Recalling our expansion \eqref{d
exp},
\begin{equation}
  \breve{c}_{m n} = d_{m n}^0 + \left( \frac{\varepsilon}{\lambda_{10}}
  \right) d_{m n}^1 + \left( \frac{\varepsilon}{\lambda_{10}} \right)^2 d_{m
  n}^2 + \left( \frac{\varepsilon}{\lambda_{10}} \right)^3 d_{m n}^3 + \left(
  \frac{\varepsilon}{\lambda_{10}} \right)^4 R_{m n},
\end{equation}
with $d_{m n}^k$ determined by \eqref{d0} and \eqref{ckmn}. Substitude
\eqref{d exp} into \eqref{int eq}, we can get
\begin{align*}
   &d_{m n}^0 + \left( \frac{\varepsilon}{\lambda_{10}} \right) d_{m n}^1 +
  \left( \frac{\varepsilon}{\lambda_{10}} \right)^2 d_{m n}^2 + \left(
  \frac{\varepsilon}{\lambda_{10}} \right)^3 d_{m n}^3 + \left(
  \frac{\varepsilon}{\lambda_{10}} \right)^4 R_{m n}
  \\ = &\mathi \varepsilon \int^t_0  \left[ T_{m n} (s, p) E (s) \cdummy \sum_p
  (\breve{A}_{mp}  (s, p) \left( d_{p n}^0 + \left(
  \frac{\varepsilon}{\lambda_{10}} \right) d_{m n}^1 + \left(
  \frac{\varepsilon}{\lambda_{10}} \right)^2 d_{p n}^2 + \left(
  \frac{\varepsilon}{\lambda_{10}} \right)^3 d_{p n}^3 + \left(
  \frac{\varepsilon}{\lambda_{10}} \right)^4 R_{p n} \right) \right. \\  -&\left. \left( d_{m
  p}^0 + \left( \frac{\varepsilon}{\lambda_{10}} \right) d_{m p}^1 + \left(
  \frac{\varepsilon}{\lambda_{10}} \right)^2 d_{m p}^2 + \left(
  \frac{\varepsilon}{\lambda_{10}} \right)^3 d_{m p}^3 + \left(
  \frac{\varepsilon}{\lambda_{10}} \right)^4 R_{m p} \right)  \breve{A}_{pn} 
  (s, p) \right] d s + c_{m n}^0 (p_j^0) .
\end{align*}
By \eqref{d0} and \eqref{ckmn}, we can get the equation for $R_{m n}$,
\begin{eqnarray*}
  R_{m n}  =  & - \mathi \lambda_{10} \int^t_0 T_{m n} (s, p) E (s) \cdummy
  \sum_p (\breve{A}_{mp} d_{pn}^3 - d_{mp}^3  \breve{A}_{pn}) (s, p) d s \\ & -
  \mathi \varepsilon \int^t_0 T_{m n} (s, p) E (s) \cdummy \sum_p
  (\breve{A}_{mp} R_{p n} - R_{m p}  \breve{A}_{pn}) (s, p) d s.
\end{eqnarray*}
It follows that
\begin{eqnarray*}
  | R_{m n} (t, p) | & \leqslant & \sum_p \int^t_0 (| \lambda_{10} | (| E
  \cdummy \breve{A}_{mp} d_{pn}^3 | + | d_{mp}^3 E \cdummy \breve{A}_{pn} |)\\
  &+&
  \varepsilon (| E \cdummy \breve{A}_{mp} |  | R_{pn} | + | R_{mp} | | E
  \cdummy \breve{A}_{pn} |)) (s, p) d s.
\end{eqnarray*}
We can take the summation with respect to $m, n$ and get
\[ \sum_{m, n} | R_{m n} (t, p) | \leqslant C \sum_{m, n, p} \int^t_0 (|
   d_{pn}^3 | + | d_{mp}^3 |) (s, p) d s + C \int^t_0 \sum_{m, n} | R_{m n}
   (s, p) | d s, \]
By the Gronwall's inequality, we can obtain that
\begin{eqnarray*}
  \sum_{m, n} | R_{m n} (t, p) | & \leqslant & e^{C t} \sum_{m, n, p} \int^t_0
  (| d_{pn}^3 | + | d_{mp}^3 |) (s, p) d s\\
  & \leqslant & C.
\end{eqnarray*}
Also remember that in the last section, all the remainders $\kappa_{m n}^k (0
\leqslant k \leqslant 3)$ are $O ((\varepsilon / \lambda_{10})^4)$. Thus by we
get the regularity of the remainder $\varsigma_{m n}^4$,
\[ \varsigma_{m n}^4 \leqslant C. \]
This gives the estimate for the remainder and thus finishes the proof.

\begin{remark}
  \label{weaker assumption}In this proof, a key point is that all $A_{m n}$ is
  bounded when 
  $\max \{ m, n \} \geqslant 2$. This is a result from the first
  statement in Assumption \ref{regular asump}. In fact, we can use weaker
  assumptions. 
\end{remark}

\section{More details about the proof of Theorem \ref{J asym
exp}}\label{B4}

In this section, we will supplement some proof details of Theorem \ref{J asym exp}, which are omitted in the main text.

\subsection{Contribution of $c_{m n}^{2, b}$}\label{C1}

Similar to $c_{m n}^0$, for $c_{m n}^{2, b}$ we have
\begin{align*}
  & \sum_{m, n} \int_{\Gamma_r \cup \Gamma_m} \breve{D}^k_{n m} (t, p) \left(
  \frac{\varepsilon}{\lambda_{10} (p)} \right)^2 c_{m n}^{2, b} (t, p) dp\\
  +&
  \varepsilon^3 \sum_j \sum_{m, n \in \{ 0, 1 \}} \int_{B_{W_j} (0, r_1)} | 
  \tilde{p}_j |^{- 2} \check{D}^k_{j, m n} s_{m n}^{j, 2, b} d \tilde{p}_j\\
  = & \sum_{m, n} \int_{\Gamma^{\ast}} \breve{D}^k_{n m} (t, p)  \left(
  \frac{\varepsilon}{\lambda_{10} (p)} \right)^2 c_{m n}^{2, b} (t, p) dp + o
  (\varepsilon^3) .
\end{align*}
Recall that $c_{m n}^{2, b}$ is defined by \eqref{c2b def}. It follows that
\begin{eqnarray*}
  & &\sum_{m, n} \int_{\Gamma^{\ast}} \breve{D}^k_{n m} (t, p)  \left(
  \frac{\varepsilon}{\lambda_{10} (p)} \right)^2 c_{m n}^{2, b} (t, p) dp\\
  & =
  & - \varepsilon^2 \sum_{m, n} \int_{\Gamma^{\ast}} \breve{D}^k_{n m}  (t, p)
  E (t) \cdummy \breve{X}_{m n}  (t, p) dp\\
  & = & - \varepsilon^2 \sum_{m, n} \int_{\Gamma^{\ast}} D^k_{n m}  (p) E (t)
  \cdummy X_{m n}  (p) dp\\
  & = & \varepsilon^2 \sum_{m, n} \int_{\Gamma^{\ast}} \frac{f_{mn} (p) E (t)
  \cdummy D_{mn}  (p) D^k_{n m}  (p)}{\mathi \lambda_{m n} (p)^2} dp.
\end{eqnarray*}
According to \eqref{D symmetry} and the definition of $f_{mn} (p)$ given by
\eqref{f def}, we finally get
\begin{align*}
  & \sum_{m, n} \int_{\Gamma_r \cup \Gamma_m} \breve{D}^k_{n m} (t, p) \left(
  \frac{\varepsilon}{\lambda_{10} (p)} \right)^2 c_{m n}^{2, b} (t, p) dp\\
  + &
  \varepsilon^3 \sum_j \sum_{m, n \in \{ 0, 1 \}} \int_{B_{W_j} (0, r_1)} | 
  \tilde{p}_j |^{- 2} \check{D}^k_{j, m n} s_{m n}^{j, 2, b} d \tilde{p}_j\\
  = & \varepsilon^2 \sum_{n \geqslant 1} \int_{\Gamma^{\ast}} \frac{c_{00}^{i
  n} (p) (E (t) \cdummy D_{0 n}  (p) D^k_{n 0}  (p) - E (t) \cdummy D_{n 0} 
  (p) D^k_{0 n}  (p))}{\mathi \lambda_{n 0} (p)^2} dp + o (\varepsilon^3)\\
  = & 2 \varepsilon^2 \sum_{n \geqslant 1} \int_{\Gamma^{\ast}}
  \frac{c_{00}^{i n} (p) E (t) \cdummy \tmop{Im} (D_{0 n}  (p) D^k_{n 0} 
  (p))}{\lambda_{n 0} (p)^2} dp + o (\varepsilon^3) .
\end{align*}
\subsection{Limit of $L_2$}\label{C2}

Recall that the definition of $L_2$ is given by \eqref{L2 def}. We first
concentrate on the case with $n = 0, m = 1$. Since
\[ \breve{\lambda}_{10} (t, p) = \lambda_{10} (p) + \varepsilon \int_0^t E (s) d
   s \cdummy \int_0^1 \nabla_p \lambda_{10} \left( p + \alpha \varepsilon
   \int_0^t E (s) d s \right) d \alpha, \]
recalling \eqref{T def}, we can obtain that
\begin{eqnarray*}
  T_{10} (t, p) & = & e^{\frac{\mathi}{\varepsilon}  \int_0^t
  \breve{\lambda}_{10} (r, p) dr}\\
  & = & e^{\frac{\mathi}{\varepsilon} \lambda_{10} (p) t} g (t, p),
\end{eqnarray*}
with
\[ g (t, p) = \exp \left( \mathi \int_0^t E (s) d s \cdummy \int_0^1 \nabla_p
   \breve{\lambda}_{10} \left( p + \alpha \varepsilon \int_0^t E (s) d s \right)
   d \alpha \right) . \]
It follows that the term with $n = 0, m = 1$ becomes
\begin{align*}
  & \int_{\Gamma_m} T_{10} (t, p) E (0) \cdummy \breve{X}_{10}  (0, p)
  \breve{D}^k_{01} \chi_2 dp\\
  = & \int_{\Gamma_m} e^{\frac{\mathi}{\varepsilon} \lambda_{10} (p) t} g (t, p)
  E (0) \cdummy \breve{X}_{10}  (0, p) \breve{D}^k_{01} \chi_2 dp\\
  = & \sum_j \int_{B_{W_j} (p_j^0, r_2) \backslash B_{W_j} (p_j^0, \varepsilon
  r_1)} e^{\frac{\mathi}{\varepsilon} \lambda_{10} (p) t} g (t, p) E (0) \cdummy
  \breve{X}_{10}  (0, p) \breve{D}^k_{01} \chi_2 dp.
\end{align*}
Notice that $\breve{X}_{m n} (0, p) = X_{m n} (p)$. Thanks to \eqref{change
vari}, it follows that
\begin{eqnarray*}
  &  & \int_{\Gamma_m} T_{10} (t, p) E (0) \cdummy X_{10}  (p)
  \breve{D}^k_{01} (t, p) \chi_2 (p) dp\\
  & = & \sum_j \int_{B_{W_j} (p_j^0, r_2) \backslash B_{W_j} (p_j^0,
  \varepsilon r_1)} e^{\frac{\mathi}{\varepsilon} \lambda_{10} (p) t} g (t, p) E
  (0) \cdummy X_{10}  (p) \breve{D}^k_{01} (t, p) \chi_2 (p) dp\\
  & = & \mathi \sum_j \int_{B_{W_j} (p_j^0, r_2) \backslash B_{W_j} (p_j^0,
  \varepsilon r_1)} e^{\frac{\mathi}{\varepsilon} \lambda_{10} (p) t} (g
  \breve{D}^k_{01}) (t, p) \left( \frac{E (0) \cdummy D_{10} c_{00}^{i n}
  \chi_2}{\lambda_{10}^2} \right)  (p) d p.
\end{eqnarray*}
In the last step, we used the expression of $X_{1 0}$ given by \eqref{X def}
and the relation \eqref{AD relation}. Let
\[ F_j (q, \omega) = (g \breve{D}^k_{01}) (t, p_j^0 + r_j (q, \omega) \omega)
   (E (0) \cdummy D_{10} c_{00}^{i n} \chi_2)  (p_j^0 + r_j (q, \omega)
   \omega) . \]
It is easy to show that $F_j$ satisfies \eqref{cond1lem}, \eqref{cond2lem} and
\eqref{cond3lem} with $l = 2$. By the Lemma \ref{tricky}, we can conclude that
\begin{align*}
  & \int_{\Gamma_m} T_{10} (t, p) E (0) \cdummy X_{10}  (p) \breve{D}^k_{01}
  (t, p) c_{00}^{i n} (p) \chi_2 (p) dp\\
  = & \varepsilon \sum_j c_{00}^{i n} (p_j^0) \int_{\mathbb{R}^3 \backslash
  B_{W_j} (0, r_1)} \mathi \exp \left( \mathi \int_0^t | W_j \tilde{P}_s (p) |
  d s \right) \frac{E (0) \cdummy D_{j, 10}  (p) \check{D}^k_{j, 01} (t, p)}{|
  W_j p |^2} d p\\
  +& o (\varepsilon) .
\end{align*}
When $m = 0, n = 1$, the result is the conjugate of the expression above. When
$m = 0, n > 1$, we can show that such terms vanish as $\varepsilon \rightarrow
0 +$. Therefore we finish the proof of \eqref{limit L2}.

\subsection{Contribution of $c_{m n}^{3, a} \infixand c_{m n}^{3,
c}$}\label{C3}

Similarly to $c_{m n}^{2, b}$, for $c_{m n}^{3, a} + c_{m n}^{3, c}$, we have
\[ \sum_{m, n} \int_{\Gamma r \cup \Gamma_m} \left(
   \frac{\varepsilon}{\lambda_{10}} \right)^3 (c_{m n}^{3, a} + c_{m n}^{3,
   c})  \breve{D}^k_{n m} dp = L_3 + L_4, \]
with
\begin{eqnarray*}
  L_3 & = & \sum_{m, n} \int_{\Gamma_r} \left(
  \frac{\varepsilon}{\lambda_{10}} \right)^3 (c_{m n}^{3, a} + c_{m n}^{3, c})
  \breve{D}^k_{n m} \chi_1 dp\\
  & = & \varepsilon^3 \sum_{m, n} \int_{\Gamma_r} \mathi T_{m n} (t, p)
  \left( \frac{f_{mn} E' (0) \cdummy A_{mn}  (p) }{\lambda_{n m} (p)^2}
  \right.\\
  & + & \left. \int^t_0 E (s) \cdummy (\breve{A}_{n n} - \breve{A}_{m m}) (s,
  p) E (0) \cdummy X_{m n}  (p) d s \right) \breve{D}^k_{n m} (t, p) \chi_1
  (p) d p,\\
  L_4 & = & \sum_{m, n} \int_{\Gamma_m} \left(
  \frac{\varepsilon}{\lambda_{10}} \right)^3 (c_{m n}^{3, a} + c_{m n}^{3, c})
  \breve{D}^k_{n m} \chi_2 dp.
\end{eqnarray*}
Similar to $L_1$ defined by \eqref{L1 def}, it is easy to see that $L_3$ is $o
(\varepsilon^3)$. For $L_4$, by Lemma \ref{tricky}, we can conclude that
\[ L_4 = \varepsilon^3 \sum_j \sum_{m, n \in \{ 0, 1 \}} \int_{\mathbb{R}^3
   \backslash B_{W_j} (0, r_1)}  \check{D}^k_{j, m n} | W_j  \tilde{p}_j |^{-
   3}  (s_{m n}^{j, 3, a} + s_{m n}^{j, 3, c}) d \tilde{p}_j + o
   (\varepsilon^3) . \]
This finishes the proof of \eqref{L4 limit}.

\subsection{The analysis of $L_0$}\label{C4}

Therefore, we can let
\[ L_0 = L_r + L_s, \]
with
\begin{eqnarray*}
  L_r & = & 2 \int_{P_t^{- 1} \left( \Gamma_r \bigcup \Gamma_m \right)}
  c_{00}^{i n} (p) \tmop{Re} \left( \frac{E' (t) \cdummy D_{01}  (p) D^k_{1 0}
  (p)}{\lambda_{10} (p)^3} \right) d p\\
  & - & 2 \sum_j \int_{B_{W_j} (p_j^0, r_2) \backslash P_t^{- 1} (\Gamma_s)}
  c_{00}^{i n} (p_j^0) \tmop{Re} \left( \frac{E' (t) \cdummy D_{j, 01}  (p -
  p_j^0) D^k_{j, 10}  (p - p_j^0)}{| W_j (p - p_j^0) |^3} \right) dp,
\end{eqnarray*}
and
\[ L_s = 2 \sum_j \int_{B_{W_j} (p_j^0, r_2) \backslash P_t^{- 1} (\Gamma_s)}
   c_{00}^{i n} (p_j^0) \tmop{Re} \left( \frac{E' (t) \cdummy D_{j, 01}  (p -
   p_j^0) D^k_{j, 10}  (p - p_j^0)}{| W_j (p - p_j^0) |^3} \right) dp. \]
Recall that around the Weyl node $p_j^0$, we have
\begin{eqnarray*}
  D_{01}  (p_j^0 + \delta p) & = & D_{j, 01}  (\delta p) + O (| \delta p |),\\
  \lambda_{10} (p_j^0 + \delta p) & = & | W_j \delta p | + O (| \delta p |),\\
  c_{00}^{i n} (p_j^0 + \delta p) & = & c_{00}^{i n} (p_j^0) + O (| \delta p
  |) .
\end{eqnarray*}
Here $\delta p$ is a small perturbation. If follows that
\[ \frac{D_{01}  (p_j^0 + \delta p) D^k_{1 0}  (p_j^0 + \delta
   p)}{\lambda_{10} (p_j^0 + \delta p)^3} = \frac{D_{j, 01}  (\delta p)
   D^k_{j, 10}  (\delta p)}{| W_j \delta p |^3} + O (| \delta p |^{- 2}), \]
with the remainder integrable. $L_r$ is not singular and can be rewritten as
\begin{align*}
L_r  = & 2 \tmop{Re} E' (t) \cdummy \int_{P_t^{- 1} \left( \Gamma_r \bigcup
  \Gamma_m \right)} \left( c_{00}^{i n} (p) \frac{D_{01}  (p) D^k_{1 0} 
  (p)}{\lambda_{10} (p)^3} \right.\\
  - & \sum_j I_{B_{W_j} (p_j^0, r_2) \backslash P_t^{- 1} (\Gamma_s)} (p)
  c_{00}^{i n} (p_j^0) \left. \frac{D_{j, 01}  (p - p_j^0) D^k_{j, 10}  (p -
  p_j^0)}{| W_j (p - p_j^0) |^3} \right) dp\\
  = & 2 \tmop{Re} E' (t) \cdummy \int_{\Gamma^{\ast}} \left( c_{00}^{i n} (p)
  \frac{D_{01}  (p) D^k_{1 0}  (p)}{\lambda_{10} (p)^3}\right.\\
  - &\left.\sum_j c_{00}^{i n}
  (p_j^0) I_{B_{W_j} (p_j^0, r_2)} (p) \frac{D_{j, 01}  (p - p_j^0) D^k_{j,
  10}  (p - p_j^0)}{| W_j (p - p_j^0) |^3} \right) dp + o (1) .
\end{align*}
For $L_s$, we can rewrite it as
\begin{eqnarray*}
  L_s & = & 2 \tmop{Re} \sum_j c_{00}^{i n} (p_j^0) \int_{B_{W_j} (p_j^0,
  r_2)} I_{| W_j (P_t^{- 1} (p) - p_j^0) | > \varepsilon r_1} \frac{2 E' (t)
  \cdummy D_{j, 01}  (p - p_j^0) D^k_{j, 10}  (p - p_j^0)}{| W_j (p - p_j^0)
  |^3} dp\\
  & = & 2 \tmop{Re} \sum_j c_{00}^{i n} (p_j^0) \int_{P_t (B_{W_j} (p_j^0,
  r_2)) \backslash B_{W_j} (p_j^0, \varepsilon r_1)} \frac{2 E' (t) \cdummy
  \breve{D}_{j, 01}  (t, p - p_j^0) \breve{D}_{j, 10}^k  (t, p - p_j^0)}{| P_t
  (p) - p_j^0 |^3} dp\\
  & = & 2 \tmop{Re} \sum_j c_{00}^{i n} (p_j^0) \int_{P_t \left( B_{W_j}
  \left( p_j^0, \frac{r_2}{\varepsilon} \right) \right) \backslash B_{W_j}
  (p_j^0, r_1)} \frac{2 E' (t) \cdummy \check{D}_{j, 01}  (t, \tilde{p}_j)
  \check{D}^k_{j, 10}  (t, \tilde{p}_j)}{| P_t (\tilde{p}_j) |^3} d
  \tilde{p}_j .\\
  & = & 2 \tmop{Re} \sum_j c_{00}^{i n} (p_j^0) \int_{P_t \left( B_{W_j}
  \left( p_j^0, \frac{r_2}{\varepsilon} \right) \right) \backslash B_{W_j}
  (p_j^0, r_1)} s_{01}^{j, 3, b} (t, \tilde{p}_j) \check{D}^k_{j, 10}  (t,
  \tilde{p}_j) d \tilde{p}_j\\
  & = & \sum_{m, n \in \{ 0, 1 \}} \sum_j \int_{P_t \left( B_{W_j} \left(
  p_j^0, \frac{r_2}{\varepsilon} \right) \right) \backslash B_{W_j} (p_j^0,
  r_1)} c_{00}^{i n} (p_j^0) s_{m n}^{j, 3, b} (t, \tilde{p}_j) \check{D}^{j,
  k}_{m n}  (t, \tilde{p}_j) d \tilde{p}_j .
\end{eqnarray*}
This finishes the proof of \eqref{L0}.

\subsection{Vanishing terms}\label{E5}

In this part, we will prove \eqref{s2b vanish}. Recall that $s_{j, m n}^{2,
b}$ is the limit of $c_{m n}^{2, b}$ defined by \eqref{c2b def}. It follows
that
\begin{eqnarray*}
  & &\sum_{m, n \in \{ 0, 1 \}} \int_{\mathbb{R}^3} \check{D}^k_{m n} | W_j 
  \tilde{p}_j |^{- 2} s_{m n}^{j, 2, b} d \tilde{p}_j\\
  & = & \sum_{m, n \in \{
  0, 1 \}} \int_{\mathbb{R}^3}  \frac{f_{m n} (p_j^0) E (t) \cdummy
  \check{D}_{j, m n}  (t, \tilde{p}_j) \check{D}^k_{j, m n}  (t,
  \tilde{p}_j)}{| W_j P_t (\tilde{p}_j) |^2} d \tilde{p}_j\\
  & = & \sum_{m, n \in \{ 0, 1 \}} \int_{\mathbb{R}^3}  \frac{f_{m n} (p_j^0)
  E (t) \cdummy \check{D}_{j, m n}  (\tilde{p}_j) \check{D}^k_{j, m n} 
  (\tilde{p}_j)}{| W_j  \tilde{p}_j |^2} d \tilde{p}_j\\
  & = & \sum_{m, n \in \{ 0, 1 \}} \int_{\mathbb{R}^3}  \frac{c_{00}^{i n}
  (p_j^0) E (t) \cdummy \tmop{Im} (D_{j, 01}  (\tilde{p}_j) D^k_{j, 10} 
  (\tilde{p}_j))}{| W_j  \tilde{p}_j |^2} d \tilde{p}_j .
\end{eqnarray*}
By change of variable, we get
\begin{align*}
  & \sum_{m, n \in \{ 0, 1 \}} \int_{\mathbb{R}^3} \check{D}^k_{m n} | W_j 
  \tilde{p}_j |^{- 2} s_{m n}^{j, 2, b} d \tilde{p}_j\\
  = & \sum_{m, n \in \{ 0, 1 \}} \int_{\mathbb{R}^3}  \frac{c_{00}^{i n}
  (p_j^0) E (t) \cdummy \tmop{Im} (D_{j, 01}  (\tilde{p}_j) D^k_{j, 10} 
  (\tilde{p}_j))}{| W_j  \tilde{p}_j |^2} d \tilde{p}_j\\
  = & \sum_{m, n \in \{ 0, 1 \}} c_{00}^{i n} (p_j^0) \int_0^{+ \infty} r^{-
  2} \int_{\mathbb{S }^2_W} E (t) \cdummy \tmop{Im} (D_{j, 01}  (\omega r)
  D^k_{j, 10}  (\omega r)) d \omega dr\\
  = & 0.
\end{align*}
In the last step, we used \eqref{delta kl} in Proposition \ref{eff result}.
(Notice that the integral of $D_{j, 01}  (\omega r) D^k_{j, 10}  (\omega r)$
is real) This finishes the proof of the second equality of \eqref{s2b vanish}.
For the first equality, the proof is almost the same. The only difference is
that we need to use \eqref{D--} in Proposition \ref{eff result} instead of
\eqref{delta kl}.

\bibliographystyle{siamplain}
\bibliography{Crystal}

\end{document}